\documentclass[11pt,fleqn]{article}

\usepackage{amsfonts}
\usepackage{amssymb}
\usepackage{amsthm}
\usepackage[leqno]{amsmath}

\makeatletter
\newcommand{\leqnomode}{\tagsleft@true}
\newcommand{\reqnomode}{\tagsleft@false}
\makeatother

\usepackage[UKenglish]{babel}
\usepackage{enumerate}
\usepackage{footnote}
\usepackage{hyperref}
\usepackage{latexsym}
\usepackage{lmodern}
\theoremstyle{plain}
\newtheorem{theorem}{Theorem}
\newtheorem{conjecture}[theorem]{Conjecture}

\newtheorem{lemma}[theorem]{Lemma}
\theoremstyle{remark}

\usepackage{caption}
\DeclareCaptionType{algorithm}
\clubpenalty = 10000
\widowpenalty = 10000
\displaywidowpenalty = 10000

\newcommand{\sset}[1]{\left\{#1\right\}}
\newcommand{\iskfour}{\textnormal{ISK}_4}

\newcommand{\isk}{\textnormal{ISK}_4}
\newcommand{\iskt}{\left\{\textnormal{ISK}_4, \textnormal{triangle}\right\}}
\newcommand{\isktk}{\left\{\textnormal{ISK}_4, \textnormal{triangle}, K_{3,3}\right\}}
\newcommand{\iskk}{\left\{\textnormal{ISK}_4, K_{3,3}\right\}}

\usepackage{marvosym}

\def\longbox#1{\parbox{0.85\textwidth}{#1}}

\title{Triangle-free graphs that do not contain an induced subdivision of $K_4$ are 3-colorable}
\author{Maria Chudnovsky\thanks{Supported by NSF grant DMS-1550991 and US Army Research Office Grant W911NF-16-1-0404.}
\\
Princeton University, Princeton, NJ 08544
\\
\\
Chun-Hung Liu
\\
Princeton University, Princeton, NJ 08544
\\
\\
Oliver Schaudt
\\
Universit\"at zu K\"oln, K\"oln, Germany
\\
\\
Sophie Spirkl
\\
Princeton University, Princeton, NJ 08544
\\
\\
Nicolas Trotignon\thanks{Partially supported by ANR project Stint
    under reference ANR-13-BS02-0007 and by the LABEX MILYON
    (ANR-10-LABX-0070) of Universit\'e de Lyon, within the program
    ``Investissements d'Avenir'' (ANR-11-IDEX-0007) operated by the
    French National Research Agency (ANR).}
\\
CNRS, LIP, ENS de Lyon, INRIA, Universit\'e de Lyon, France
\\
\\
Kristina Vu\v{s}kovi\'c
\\
School of Computing, University of Leeds, Leeds LS2 9JT, UK}
\date{\today}

\begin{document}
\sloppy

\maketitle

\begin{abstract} 
We show that triangle-free graphs that do not contain an induced subgraph isomorphic to a subdivision of $K_4$ are 3-colorable. This proves a conjecture of Trotignon and Vu\v{s}kovi\'c \cite{trotignon}.
\end{abstract}

\section{Introduction}

All graphs in this paper are finite and simple.
Let $G$ be a graph. For a vertex $v \in V(G)$, we denote its set of neighbors by $N(v)$, and we let $N[v] = \sset{v} \cup N(v)$. For $X, Y \subseteq V(G)$, we say that $X$ is \emph{complete} to $Y$ if every vertex in $X$ is adjacent to every vertex in $Y$; $X$ is \emph{anticomplete} to $Y$ if every vertex in $X$ is non-adjacent to every vertex in $Y$. A vertex $v \in V(G)$ is \emph{complete} (\emph{anticomplete}) to $X \subseteq V(G)$ if $\sset{v}$ is complete (anticomplete) to $X$. A set $X \subseteq V(G)$ is a \emph{cutset} for $G$ if there is a partition $(X, Y, Z)$ of $V(G)$ with $Y, Z \neq \emptyset$ and $Y$ anticomplete to $Z$. The cutset $X$ is a \emph{clique cutset} if $X$ is a (possibly empty) clique in $G$. For a graph $H$, we say that $G$ \emph{contains} $H$ if $H$ is isomorphic to an induced subgraph of $G$, and otherwise, $G$ is $H$-\emph{free}. For a family $\mathcal{F}$ of graphs, we say that $G$ is $\mathcal{F}$-\emph{free} if $G$ is $F$-free for every graph $F \in \mathcal{F}$. 

For a graph $G$ and $X \subseteq V(G)$, $G|X$ denotes the induced subgraph of $G$ with vertex set $X$. For $X \subseteq V(G)$, we let $G \setminus X = G|(V(G) \setminus X)$ and for $x \in V(G)$, we let $G \setminus x = G|(V(G) \setminus \sset{x})$.  By a {\em path} in a graph we mean an induced path. Let $C$ be a cycle in $G$. The \emph{length} of $C$ is $|V(C)|$. The \emph{girth} of $G$ is the length of a shortest cycle, and is defined to be $\infty$ if $G$ has no cycle. A \emph{hole} in a graph is an induced cycle of length at least four. An $\isk$ is a graph that is isomorphic to a subdivision of $K_4$. 

In \cite{trotignon} two of us studied the structure of $\isk$-free graphs,
and proposed the following conjecture (and proved several special cases of it):

\begin{conjecture}
\label{chi3}
If $G$ is $\iskt$-free, then $\chi(G) \leq 3$.
\end{conjecture}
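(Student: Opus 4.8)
The plan is to prove the statement by induction on $|V(G)|$, using the structural decomposition theorem for triangle-free $\isk$-free graphs from \cite{trotignon}: every such graph either belongs to one of a few explicitly described basic classes---among them the complete bipartite graphs and the series-parallel graphs---or has a clique cutset, or has a proper $2$-cutset. The basic classes cause no trouble: complete bipartite graphs are $2$-colorable and series-parallel graphs are $2$-degenerate, hence $3$-colorable; and should the decomposition one uses instead produce a basic class of the shape ``$\isk$-free, no clique cutset, maximum degree at most $3$'', Brooks' theorem settles it, since a triangle-free graph of maximum degree at most $3$ is neither a $K_4$ nor an odd cycle.

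The clique-cutset case is routine. If $X$ is a clique cutset with $V(G)=X\cup Y\cup Z$, $Y,Z\neq\emptyset$ and $Y$ anticomplete to $Z$, then $|X|\le 2$ because $G$ is triangle-free; by induction $G|(X\cup Y)$ and $G|(X\cup Z)$ are $3$-colorable, and as $X$ is a clique on at most two vertices one may permute colors on one side so that the two colorings agree on $X$, then glue them.

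The crux---and the step I expect to be by far the hardest---is the proper $2$-cutset case. Let $\{a,b\}$ be the cutset and $X_1,X_2$ the two sides; a $2$-cutset inducing an edge would be a clique cutset, so $a\not\sim b$, and the properness of the cutset ensures that the two blocks of decomposition $G_1,G_2$---each obtained from $G|(X_i\cup\{a,b\})$ by adding one new vertex adjacent to exactly $a$ and $b$---are strictly smaller than $G$; by \cite{trotignon} they are again triangle-free and $\isk$-free, hence $3$-colorable by induction. To assemble a $3$-coloring of $G$ from colorings of $G_1$ and $G_2$, these must agree on $\{a,b\}$, and a permutation of colors achieves this unless every $3$-coloring of one block gives $a$ and $b$ the same color while every $3$-coloring of the other gives them distinct colors. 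Ruling out this bad configuration is the heart of the matter. One natural route is to strengthen the statement carried through the induction---for instance to assert that every triangle-free $\isk$-free graph admits, for each pair of nonadjacent vertices $u,v$, both a $3$-coloring with $c(u)=c(v)$ and a $3$-coloring with $c(u)\neq c(v)$---and to verify that this property is preserved by all three decomposition operations, so that both blocks are ``flexible'' on $\{a,b\}$ and the gluing is automatic; a second route is to use $\isk$-freeness and triangle-freeness to constrain directly how $a$ and $b$ can attach to $X_1$ and $X_2$ and conclude that at least one block is flexible. Either way, one must check that every auxiliary graph that arises---a block, a graph obtained by identifying $a$ with $b$, or one obtained by adding a common neighbour of $a$ and $b$---is again triangle-free and $\isk$-free so that the inductive hypothesis applies; carrying this out, and isolating a strengthened hypothesis that is simultaneously inductively stable and strong enough to force the two blocks together, is where essentially all the difficulty of the proof is concentrated.

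Finally, if the decomposition theorem one invokes handles $K_{3,3}$ (or general complete bipartite subgraphs) through a dedicated cutset, the same clique-cutset and $2$-cutset bookkeeping first reduces the problem to the $\isktk$-free case---harmlessly, since the complete bipartite pieces peeled off are $2$-colorable---after which the argument above is run inside that narrower class.
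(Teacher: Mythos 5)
Your plan diverges from the paper at the decisive point, and the part you leave open is exactly the part that cannot be waved away. The paper does \emph{not} glue $3$-colorings across proper $2$-cutsets. It proves Theorem~\ref{v2}: every $\iskt$-free graph has a clique cutset, or is complete bipartite, or has a vertex of degree at most two. Conjecture~\ref{chi3} then follows by a completely routine induction (delete a vertex of degree at most two and extend the coloring greedily; $2$-color the complete bipartite case; glue across the clique cutset, which is trivially color-compatible). All of the structural difficulty is concentrated in producing the degree-two vertex, via the analysis of proper wheels (Theorems~\ref{wheelmain0} and~\ref{thm:main0}). Your clique-cutset paragraph is fine and matches the paper; the basic classes are handled the same way.

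The gap is your proper $2$-cutset case. You correctly identify that gluing requires ruling out the configuration where one block forces $c(a)=c(b)$ in every $3$-coloring while the other forces $c(a)\neq c(b)$, and you propose either a strengthened inductive hypothesis (``flexibility'' on every nonadjacent pair) or a direct attachment argument --- but you prove neither, and neither is routine. The flexibility statement is not obviously preserved by the decomposition operations: when you pass to a block you must track flexibility not just for the pair $\{a,b\}$ of the current cutset but for every pair that could arise as a cutset deeper in the recursion, and the auxiliary graphs you mention (identifying $a$ with $b$, adding a common neighbour) are not automatically $\iskt$-free, so the inductive hypothesis need not apply to them. This is precisely the known obstruction: the decomposition theorem of \cite{trotignon} with clique cutsets and proper $2$-cutsets was already available and was \emph{not} sufficient to prove $3$-colorability (it only yielded the bound $4$ in \cite{le}, and \cite{trotignon} instead posed the degree-two-vertex statement as a conjecture). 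So your outline reduces the theorem to an unproved claim that is essentially as hard as the theorem itself; as written it is not a proof.
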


In~\cite{le}, Conjecture~\ref{chi3} was proved with 3 replaced by 4. 

The main result of the present paper is the proof of Conjecture~\ref{chi3}. In fact, we prove a stronger statement, from which Conjecture~\ref{chi3} easily follows:

\begin{theorem} 
\label{v2}
Let $G$ be an $\iskt$-free graph. Then either $G$ has a clique cutset, $G$ is complete bipartite, or $G$ has a vertex of degree at most two. 
\end{theorem}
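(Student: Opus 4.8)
The plan is to argue by contradiction, taking a counterexample $G$ with $|V(G)|$ minimum; so $G$ is $\iskt$-free, has no clique cutset, is not complete bipartite, and has minimum degree at least three. Since $G$ is triangle-free, every clique of $G$ has at most two vertices, so the hypothesis ``no clique cutset'' says exactly that $G$ is connected, has no cutvertex, and has no edge $uv$ for which $G\setminus\{u,v\}$ is disconnected. I would then split the argument according to whether $G$ contains an induced $K_{3,3}$.

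\textbf{The case where $G$ contains $K_{3,3}$.} Choose an induced complete bipartite subgraph $G|(X\cup Y)$ with parts $X,Y$, with $|X|,|Y|\ge 3$, and with $|X\cup Y|$ maximum; let $W=V(G)\setminus(X\cup Y)$, which is nonempty (as $G$ is not complete bipartite) and has a vertex with a neighbour in $X\cup Y$ (as $G$ is connected). First, for each $v\in W$: since $G$ is triangle-free and $X$ is complete to $Y$, $v$ has no neighbour in $X$ or none in $Y$, say $N(v)\cap Y=\emptyset$; by maximality $v$ is not complete to $X$; and if $v$ had two neighbours $x_1,x_2\in X$ then, with $x_3\in X\setminus N(v)$ and $y_1,y_2\in Y$, the set $\{v,x_1,x_2,x_3,y_1,y_2\}$ would induce a subdivision of $K_4$, with branch vertices $x_1,x_2,y_1,y_2$, the edge $x_1x_2$ subdivided by $v$ and the edge $y_1y_2$ subdivided by $x_3$ --- contradicting that $G$ is $\isk$-free. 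Hence every vertex of $W$ has at most one neighbour in $X\cup Y$. A similar use of the same gadget, applied to a component $D$ of $G|W$ together with two of its vertices joined by a path in $D$, shows that $N(D)\cap(X\cup Y)$ contains no two non-adjacent vertices, so it is a clique of size at most two; since $|X\cup Y|\ge 6$, this clique is a clique cutset of $G$ separating $D$ from the rest --- a contradiction. Therefore a minimal counterexample is $K_{3,3}$-free, hence $\isktk$-free.

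\textbf{The $\isktk$-free case} is the heart of the proof. I would attack it with two ingredients. First, eliminate $2$-cutsets: if $\{a,b\}$ is a $2$-cutset of $G$ (necessarily $a\not\sim b$) with sides $X,Y$, then each side joins $a$ to $b$ by an induced path, and replacing the larger side by an induced path of length two between $a$ and $b$ yields a strictly smaller $\isktk$-free graph that is again not complete bipartite and inherits enough structure (any induced $K_4$-subdivision using the new path lifts back to $G$ via a path in the deleted side); an extremal choice of the $2$-cutset, together with the fact that $\delta(G)\ge 3$ forbids short sides, then forces a clique cutset of $G$ --- a contradiction. Once $G$ has no cutset of size at most two, I would analyse a shortest hole $H$ (it exists since $\delta(G)\ge 3$ and $\girth(G)\ge 4$): triangle-freeness prevents two neighbours on $H$ of an outside vertex from being consecutive, and then the absence of $K_{3,3}$ and of induced $K_4$-subdivisions is used to bound severely how vertices and connected subgraphs of $G\setminus V(H)$ attach to $H$ --- the core point being that a vertex with three ``spread'' neighbours on $H$ creates an induced subdivision of $K_4$, and that wheel-like and richer attachment configurations either do the same or produce a forbidden small subgraph. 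With the attachments controlled and no small cutset available, $G$ is pinned down to a bounded-size graph, and a finite check gives the final contradiction.

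\textbf{Where the difficulty lies.} The first case is essentially routine once the $K_4$-subdivision gadget inside ``a complete bipartite graph plus one extra vertex'' is spotted. The real work, and the main obstacle, is the $\isktk$-free case: checking that the $2$-cutset reductions preserve $\iskt$-freeness and the non-existence of clique cutsets, and carrying out a genuinely exhaustive analysis of attachments to a shortest hole when triangles, copies of $K_{3,3}$, and induced subdivisions of $K_4$ are simultaneously forbidden. One must be careful here: the naive guess that a vertex has at most two neighbours on a shortest hole is false (a hub adjacent to alternate vertices of a long hole is not by itself an obstruction), so the wheel analysis has to be done with care, and it is easy to overlook a configuration.
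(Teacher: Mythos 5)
Your first case (when $G$ contains $K_{3,3}$) is essentially sound and matches what the paper imports as Theorem~\ref{thm:todo} from \cite{leveque}: the gadget showing a vertex outside an inclusion-maximal induced complete bipartite subgraph has at most one neighbour in it, and the resulting clique cutset of size at most two, is the right argument there. The reduction of the whole theorem to the $\isktk$-free case is also exactly what the paper does. The problem is everything after that.

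For the $\isktk$-free case your proposal is a sketch of intent, not a proof, and its key step is unjustified. The decisive claim --- that once attachments to a shortest hole are ``controlled'' and there is no cutset of size at most two, ``$G$ is pinned down to a bounded-size graph, and a finite check gives the final contradiction'' --- has no argument behind it and there is no reason to believe it: a vertex may have four or more neighbours on a hole (a wheel), wheels may be nested, and nothing in the shortest-hole analysis bounds $|V(G)|$. You acknowledge that the wheel configurations are the real difficulty and then stop exactly there; but that analysis \emph{is} the theorem. The paper's actual resolution is not a finite check: it introduces proper wheels (Lemma~\ref{proper}), proves that for a proper wheel $(C,x)$ with fewest spokes each component of $G\setminus N(x)$ contains the interior of at most one sector (Theorem~\ref{wheelmain0}, itself resting on the long path analysis of Theorem~\ref{paths} and the ``almost proper'' lemmas), and then runs an induction on a strengthened statement about non-center pairs (Theorem~\ref{thm:main0}), using contractions of components of $G\setminus N[s]$, preservation of the class under contraction (Lemmas~\ref{lem:contract} and~\ref{lem:non-centers}), and girth-8/girth-16 arguments in an auxiliary bipartite graph to locate a degree-two vertex. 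Separately, your $2$-cutset reduction is also shaky as stated: replacing a side by a path of length two creates a vertex of degree two in the smaller graph, so minimality of the counterexample does not directly yield a contradiction, and lifting a clique cutset or complete-bipartiteness of the reduced graph back to $G$ is not addressed. As it stands the proposal does not contain the ideas needed to close the main case.
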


For an induced subgraph $H$ of $G$ we write $v \in H$ to mean $v \in V(H)$.
We use the same convention if $H$ is a path or a hole. For a path $P=p_1-\ldots-p_k$ we call the set $V(P) \setminus \{p_1,p_k\}$ the {\em interior} of $P$, 
and denote it by $P^*$.

A \emph{wheel} in a graph is a pair $W=(C,x)$ where $C$ is a hole and $x$ has at least 
three neighbors in $V(C)$. We call $C$ the {\em rim} of the wheel, and $x$ the
{\em center}. The neighbors of $x$ in $V(C)$ are called the {\em spokes} of 
$W$. Maximal paths of $C$ that do not contain any spokes in their interior 
are called the {\em sectors} of $W$. We write $V(W)$ to mean $V(C) \cup \{x\}$.

A graph is \emph{series-parallel} if it does not contain a subdivision of $K_4$ as a (not necessarily induced) subgraph.  
\begin{theorem}[\cite{duffin}] \label{thm:duffin}
Let $G$ be a series-parallel graph. Then $G$ is $\isk$-free, wheel-free, and $K_{3,3}$-free, and $G$ contains a vertex of degree at most two. 
\end{theorem}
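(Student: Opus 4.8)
We sketch how one would prove Theorem~\ref{thm:duffin}.

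The first three assertions follow by exhibiting, inside each forbidden configuration, a subdivision of $K_4$ as a (not necessarily induced) subgraph; since $G$ is series-parallel it can contain none of them. For an $\isk$ there is nothing to do, as by definition an $\iskfour$ \emph{is} a subdivision of $K_4$. In $K_{3,3}$, with parts $\{a_1,a_2,a_3\}$ and $\{b_1,b_2,b_3\}$, take $a_1,a_2,b_1,b_2$ as branch vertices: the four edges $a_1b_1,a_1b_2,a_2b_1,a_2b_2$ together with the paths $a_1\d b_3\d a_2$ and $b_1\d a_3\d b_2$ form a subdivision of $K_4$. In a wheel $W=(C,x)$, choose three spokes $s_1,s_2,s_3$; they cut the rim $C$ into three arcs, each a path with at least one edge, pairwise meeting only in $\{s_1,s_2,s_3\}$ and each avoiding $x$, so these three arcs together with the spoke-edges $xs_1,xs_2,xs_3$ form a subdivision of $K_4$ with branch vertices $x,s_1,s_2,s_3$. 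Hence a series-parallel graph is $\isk$-free, $K_{3,3}$-free and wheel-free.

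It remains to prove that a series-parallel graph $G$ has a vertex of degree at most two, which is the contrapositive of the statement: \emph{every graph $H$ with $\delta(H)\ge 3$ contains a subdivision of $K_4$ as a subgraph.} I would argue by induction on $|V(H)|$. If $H$ is disconnected, apply induction to a component. If $H$ has a cut vertex, pass to an end-block $B$ of $H$, with cut vertex $v$: every vertex of $B$ other than $v$ has all of its $H$-neighbours in $B$, hence degree at least three in $B$, and $\deg_B(v)\ge 2$ since $B$ is $2$-connected; a short argument — applying induction to $B$ when $\deg_B(v)\ge 3$, and otherwise suppressing $v$ (with a minor extra local reduction in the case where $v$'s two neighbours in $B$ happen to be adjacent) — produces a subdivision of $K_4$ in $B$, and hence in $H$, an added edge $pq$ being replaced by the path $p\d v\d q$. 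So we may assume $H$ is $2$-connected. A $2$-connected graph with $\delta(H)\ge 3$ is not a cycle, so it has a nontrivial ear decomposition over a cycle; this yields a theta subgraph (two vertices joined by three internally disjoint paths), and using $\delta(H)\ge 3$ one attaches further ears so as to create a fourth vertex of degree at least three, positioned to complete a subdivision of $K_4$.

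The last step is the main obstacle. The reductions above are routine, but proving that a $2$-connected graph with $\delta\ge 3$ actually \emph{contains} a subdivision of $K_4$ is precisely the heart of Duffin's theorem. A theta subgraph alone does not suffice, and one must build it up by ears: an ear whose two endpoints lie in the interiors of two \emph{different} paths of the theta immediately completes a $K_4$-subdivision, but in a $2$-connected graph with $\delta\ge 3$ it is not obvious that such a well-placed ear — or sequence of ears — must exist, and ruling out the graphs in which it does not amounts essentially to a characterisation of the $2$-connected series-parallel graphs. A clean alternative is to decompose $H$ into its $3$-connected components and invoke the classical fact that a $3$-connected graph on at least four vertices has a $K_4$-minor, hence — since $K_4$ has maximum degree three — a subdivision of $K_4$; or simply to quote Duffin's structural description directly. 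In any case, this is where all of the real content of Theorem~\ref{thm:duffin} is concentrated, the remaining parts being immediate from the explicit subdivisions described above.
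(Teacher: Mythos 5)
The paper offers no proof of this statement at all: it is imported verbatim from Duffin's paper, so there is nothing internal to compare against, and your write-up should be judged on its own. The explicit subdivisions you exhibit for the first three claims are correct and complete: an $\isk$ is by definition a subdivision of $K_4$; in $K_{3,3}$ the branch vertices $a_1,a_2,b_1,b_2$ together with the four cross edges and the internally disjoint paths $a_1\d b_3\d a_2$ and $b_1\d a_3\d b_2$ do form a $K_4$-subdivision; and in a wheel the three spoke edges plus the three rim arcs between the chosen spokes do the same. For the degree-two claim, your self-diagnosis is accurate: the ear-decomposition sketch for the $2$-connected case is not a proof (a theta plus further ears need not be arrangeable into a $TK_4$ without essentially re-deriving the structure of $2$-connected series-parallel graphs), and the cut-vertex reduction also quietly needs the stronger fact that a $2$-connected series-parallel graph has at least \emph{two} vertices of degree two, since the cut vertex may have degree two in its end-block. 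The route you mention via the decomposition into $3$-connected components is the one that actually closes the argument: a $3$-connected piece on at least four vertices has a wheel minor by Tutte's theorem, hence a $K_4$-minor, hence a $K_4$-subdivision because $\Delta(K_4)=3$; if every piece is a cycle or a bond one recovers the series-parallel structure and with it two vertices of degree two. Since the paper itself delegates exactly this content to \cite{duffin}, your treatment --- explicit constructions where they are easy, an honest citation where the structure theory is needed --- is a reasonable and correct account, provided the $2$-connected case is handled by the $3$-connected-components argument (or by citing Duffin or Dirac) rather than by the incomplete ear-building sketch.
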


The following two facts were proved in \cite{leveque}:

\begin{theorem}[\cite{leveque}] \label{lem:sub} Let $G$ be an $\iskt$-free graph. Then either $G$ is series-parallel, or $G$ contains a $K_{3,3}$ subgraph, or $G$ contains a wheel. If $G$ contains a subdivision of $K_{3,3}$ as an induced subgraph, then $G$ contains a $K_{3,3}$. 
\end{theorem}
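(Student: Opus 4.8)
I would treat the theorem's two assertions separately, and begin with the second one, which is much shorter. Suppose $G$ is $\iskt$-free and $H$ is an induced subgraph of $G$ that is a subdivision of $K_{3,3}$, with branch vertices $a_1,a_2,a_3,b_1,b_2,b_3$ and with $P_{ij}$ the branch path of $H$ joining $a_i$ to $b_j$. I claim $H=K_{3,3}$, which suffices, since for triangle-free $G$ every copy of $K_{3,3}$ as a subgraph is automatically induced (any extra edge would lie in a triangle), so $G$ then contains a $K_{3,3}$. If $H\neq K_{3,3}$, some $P_{ij}$, say $P_{11}$, has an interior vertex; put $H'=H\setminus P_{11}^*$. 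One checks that $K_{3,3}$ minus an edge is isomorphic to $K_4$ with two disjoint edges each subdivided once, hence is a subdivision of $K_4$; therefore $H'$, being a subdivision of $K_{3,3}$ with the edge $a_1b_1$ deleted, is itself a subdivision of $K_4$. Since $H$ is induced in $G$, so is $H'$. Thus $G$ contains an $\isk$, contradicting that $G$ is $\iskt$-free.

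For the first assertion, suppose $G$ is $\iskt$-free, not series-parallel, and wheel-free; I must produce a $K_{3,3}$ subgraph. As $G$ is not series-parallel it contains a subdivision of $K_4$ as a subgraph; among all such subgraphs choose one, $H$, with $|V(H)|$ minimum, with branch vertices $a,b,c,d$ and paths $P_{xy}$, and write $\ell_{xy}$ for the length of $P_{xy}$. Minimality gives several ``cleanness'' facts, each proved by observing that a violating edge would let us reroute $H$ through it and delete at least one vertex: every $P_{xy}$ is an induced path of $G$; no edge of $G$ joins two non-consecutive vertices of a single $P_{xy}$; no edge joins a branch vertex $x$ to a vertex of an incident path other than the $H$-neighbour of $x$ on that path; and if $\ell_{xy}\geq 2$ then $xy\notin E(G)$. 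Moreover, since $G$ is triangle-free, each of the four triangular cycles of $H$ has length at least $4$; and since two edges of $K_4$ lie in a common triangle unless they are disjoint, this forces either at least three of the paths $P_{xy}$ to have length $\geq 2$, or exactly the two paths of some perfect matching of $\{a,b,c,d\}$ to be long and the other four to be single edges.

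Since $G$ is $\isk$-free, $H$ is not an induced subgraph of $G$, so there is a chord $e=uv$ of $H$ in $G$; by cleanness, either (A) one of $u,v$ is a branch vertex and the other lies in the interior of a path of $H$ avoiding it, or (B) both $u$ and $v$ lie in interiors of two distinct paths of $H$. If $e$ is of type (B) with its two paths vertex-disjoint, each of length exactly $2$, and the remaining four paths single edges, then $H+e$ is isomorphic to $K_{3,3}$, and we are done. In every other configuration the plan is to reroute: I would write down the six paths of a $K_4$-subdivision $H'$ that uses $e$ and whose vertex set is $V(H)$ minus the interior of one prescribed $P_{xy}$, so that as soon as $\ell_{xy}\geq 2$ we contradict minimality. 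Carrying this out (using both admissible choices of the omitted path) eliminates type (A) except when $\ell_{ab}=\ell_{ac}=1$ (taking $u=a$, $v\in P_{bc}^*$), and eliminates type (B) except when several prescribed paths simultaneously have length $1$. In each surviving configuration the length constraints above force the ``short'' paths to be accompanied by long ones; triangle-freeness then forces $v$ to be non-adjacent to both ends of the path containing it, so that path is long; and the branch endpoint of $e$ --- or, in case (B), the branch vertex common to the two paths of $e$ --- has three neighbours on a triangular cycle $C_0$ of $H$ to which it does not belong. One then shows that any chord of $C_0$ would be a chord of $H$ whose reroute omits a path now forced to be long, contradicting minimality; hence $C_0$ is a hole, and together with that vertex it is a wheel, the final contradiction.

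The step I expect to be the genuine work is this last one: checking, uniformly over the many symmetric surviving configurations, that the triangular cycle $C_0$ one lands on is chordless --- that is, that \emph{every} chord of $C_0$ is a shortcut of $H$ incompatible with the forced path lengths --- and keeping the internal-disjointness verifications straight for each rerouted subdivision. Everything else (the cleanness reductions, the length bookkeeping, and the single rigid configuration producing $K_{3,3}$) is essentially mechanical by comparison.
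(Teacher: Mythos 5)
The paper does not actually prove this statement --- it is imported wholesale from \cite{leveque} --- so there is no in-paper argument to compare yours against. Your proof of the second assertion is complete and correct: the observation that $K_{3,3}$ minus an edge is a subdivision of $K_4$ (namely $K_4$ with the two edges of a perfect matching each subdivided once) does all the work, since deleting the interior of any subdivided branch path of an induced $K_{3,3}$-subdivision then leaves an induced $\isk$.

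The first assertion, however, is where essentially all of the content of the theorem lives, and what you have written there is a plan rather than a proof. Two concrete problems. First, your claimed uniform mechanism for showing $C_0$ is chordless --- that ``any chord of $C_0$ would be a chord of $H$ whose reroute omits a path now forced to be long'' --- is not correct as stated. In your surviving type (A) configuration ($u=a$, $v\in P_{bc}^*$, $\ell_{ab}=\ell_{ac}=1$, $C_0=P_{bc}\cup P_{cd}\cup P_{bd}$), a chord from $d$ into $P_{bc}^*$ is again of type (A), and its own two reroutes force $\ell_{bd}=\ell_{cd}=1$ rather than contradicting minimality; a contradiction is only reached afterwards by returning to the chord $av$ and using a \emph{third} rerouting (routing $b$ to $c$ through $d$) that omits $P_{ad}^*$, which is nonempty only because the triangle on $a,b,d$ now forces $\ell_{ad}\ge 2$. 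So chords of $C_0$ interact with the original chord, and each such interaction must be written out; it is not a single uniform minimality argument. Second, type (B) is dismissed with ``eliminates type (B) except when several prescribed paths simultaneously have length $1$,'' but those surviving configurations are exactly where the theorem is decided: for a chord $uv$ between $P_{ab}^*$ and $P_{cd}^*$ the four available reroutes force $\ell_{ac}=\ell_{ad}=\ell_{bc}=\ell_{bd}=1$, whereupon $H+uv$ is a subdivision of $K_{3,3}$ that may be proper and may fail to be induced, and one must still analyse its further chords before either your second assertion or a wheel can be invoked. None of this is fatal to the strategy --- it is, in outline, the strategy of the published proof in \cite{leveque} --- but the case analysis you defer as ``the genuine work'' is the proof, and it has not been done.
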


\begin{theorem}[\cite{leveque}] \label{thm:todo}
If $G$ is an $\iskt$-free graph and $G$ contains $K_{3,3}$, then either $G$ is complete bipartite, or $G$ has a clique cutset. 
\end{theorem}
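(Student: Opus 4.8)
The plan is to take a maximal complete bipartite induced subgraph of $G$ and show that, unless it is all of $G$, it supplies a clique cutset. Since $G$ contains $K_{3,3}$, we may choose an induced complete bipartite subgraph with sides $X$ and $Y$, $|X|,|Y|\ge 3$, that is maximal in the sense that no vertex can be added to $X$ or to $Y$. As $G$ is triangle-free and $|X|,|Y|\ge 2$, each of $X,Y$ is a stable set, and no vertex of $Z:=V(G)\setminus(X\cup Y)$ can have a neighbour in $X$ and a neighbour in $Y$ at once, since that would complete a triangle through an edge of the join. Thus every vertex of $Z$ attaches to at most one of the two sides. Moreover, by maximality no vertex of $Z$ is complete to $X$ or to $Y$: a vertex complete to $X$ and (hence) anticomplete to $Y$ could be moved into $Y$.

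The key step is to prove that every vertex $v\in Z$ has \emph{at most one} neighbour in $X\cup Y$. Suppose $v$ had two neighbours $x_1,x_2\in X$; by the previous paragraph $v$ then has a non-neighbour $x_3\in X$ and no neighbour in $Y$. Picking $y_1,y_2\in Y$, the six vertices $x_1,x_2,x_3,y_1,y_2,v$ induce $K_{3,3}$ with exactly the edge $x_3v$ deleted. Since $K_{3,3}$ minus an edge is precisely the subdivision of $K_4$ obtained by subdividing two disjoint edges once each, this induced subgraph is an $\isk$, contradicting that $G$ is $\isk$-free. Hence each vertex of $Z$ has a unique neighbour in $X\cup Y$, if it has any at all.

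With this in hand I would finish by a component analysis. If $Z=\emptyset$ then $G$ is complete bipartite. Otherwise let $C$ be a component of $G|Z$ and let $A(C)\subseteq X\cup Y$ be the set of vertices with a neighbour in $C$. Because $X,Y$ are stable and complete to each other, the only cliques of $G$ inside $X\cup Y$ are the empty set, single vertices, and edges with one end in each side. If $A(C)$ is such a clique, then $\bigl(A(C),\,C,\,V(G)\setminus(A(C)\cup C)\bigr)$ witnesses a clique cutset: $C$ is anticomplete to $X\cup Y\setminus A(C)$ and to every other component, and $X\cup Y\setminus A(C)\ne\emptyset$ as $|X\cup Y|\ge 6$. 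If instead $A(C)$ is not a clique, it contains two non-adjacent vertices, necessarily on the same side, say $x,x'\in X$. Taking $u,u'\in C$ with $u\sim x$, $u'\sim x'$ and a shortest path between them in $C$, I would build an $\isk$ with branch vertices $x,x',y_1,y_2$: the edge $xx'$ realised by the induced path from $x$ to $x'$ whose interior lies in $C$, the edge $y_1y_2$ realised by the two-edge path through some $x_3\in X\setminus\{x,x'\}$, and the four remaining $K_4$-edges realised directly by $xy_1,xy_2,x'y_1,x'y_2$. This again produces an $\isk$, a contradiction, so $A(C)$ must be a clique and we obtain a clique cutset.

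The main obstacle is this last construction: I must guarantee that the exhibited $K_4$-subdivision is genuinely \emph{induced}. The only possible chords run from the interior of the path through $C$ to the routing vertices $y_1,y_2,x_3$; since each vertex of $Z$ has at most one neighbour in $X\cup Y$ and $|X|,|Y|\ge 3$, I expect to be able to choose $y_1,y_2$ and $x_3$ among the vertices of $X\cup Y$ having no neighbour in $C$, treating separately the dense situation in which $C$ is attached to almost all of $X$ or of $Y$. Making this selection rigorous, together with choosing the connecting path through $C$ so that it acquires no unwanted chords to $X\cup Y$, is the technical heart of the argument.
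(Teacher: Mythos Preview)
The paper does not prove this theorem; it is quoted from L\'ev\^{e}que--Maffray--Trotignon \cite{leveque} and used as a black box. So there is no ``paper's own proof'' to compare against. That said, let me assess your argument on its merits.

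Your steps up to and including the claim that every $v\in Z$ has at most one neighbour in $X\cup Y$ are correct and clean; the observation that $K_{3,3}$ minus an edge is an $\isk$ is exactly the right tool. The reduction to components $C$ of $G|Z$ with attachment set $A(C)$ is also fine, and the case $|A(C)|\le 2$ does give a clique cutset.

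The genuine gap is in your final step, and it is more serious than you suggest. Your plan is to pick $y_1,y_2\in Y$ and $x_3\in X$ with no neighbour on the connecting path through $C$. But the ``at most one neighbour'' property only bounds, for each vertex of $C$, its attachment to $X\cup Y$; it does \emph{not} bound how many vertices of $Y$ can attach to the path. If the interior of your shortest $x$--$x'$ path $P=p_0\hbox{-}\cdots\hbox{-} p_m$ is long, every vertex of $Y$ may have a (distinct) neighbour among $p_2,\dots,p_{m-2}$, so there may be no admissible $y_1,y_2$ at all, regardless of how you choose $P$. The hedge ``treating separately the dense situation'' hides the whole difficulty.

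What does work is to change the target $\isk$ rather than to insist on clean $y_1,y_2$. From the shortest $X$-to-$X$ path $P$ one first notes that $p_1$ and $p_{m-1}$ have their unique $X\cup Y$-neighbour in $X$, and that $p_2,\dots,p_{m-2}$ have no neighbour in $X$. If at most one vertex of $Y$ has a neighbour on $P^*$, your construction goes through. Otherwise one has to locate two indices $a<b$ and distinct $y_1,y_2\in Y$ with $p_a\sim y_1$, $p_b\sim y_2$, chosen with a suitable extremality (for instance $a$ the global minimum over all $Y$-attachments and $y_1$ fixed accordingly, then $b$ the least index whose $Y$-neighbour differs from $y_1$, together with control on the other neighbours of $y_1$ in $\{p_1,\dots,p_b\}$), and exhibit an $\isk$ on $\{p_0,\dots,p_b,y_1,y_2,p_m\}$ with branch vertices $p_0,p_a,y_1,y_2$ (realising $y_1y_2$ by $y_1\hbox{-} p_m\hbox{-} y_2$). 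The delicate point is ensuring that $y_1$ has no further neighbours among $p_{a+1},\dots,p_b$; this does not follow from minimality of $P$ alone and requires a separate extremal choice. Until you pin down that choice and verify inducedness in all cases, the proof is incomplete.
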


Thus to prove Theorem~\ref{v2} we need to analyze $\isktk$-free graphs that 
contain wheels. This approach was already explored in \cite{trotignon}, but we 
were able to push it further, as follows.

A wheel $W=(C,x)$  is {\em proper} if for every $v \in V(G) \setminus V(W)$
\begin{itemize}
\item there is a sector $S$ of $W$ such 
that $N(v) \cap V(C) \subseteq V(S)$. 
\item If $v$ has at least three neighbors in $V(C)$, then $v$ is adjacent to 
$x$.
\end{itemize}
(Please note that this definition is different from the one in \cite{trotignon}.)
We prove:

\begin{theorem}
\label{wheelmain0}
Let $G$ be an $\isktk$-free graph, and let $x$ be the center of a proper wheel in $G$. If
$W=(C,x)$ is a proper wheel with a minimum number of spokes subject to having center $x$, then
\begin{enumerate}
\item every component of $V(G) \setminus N(x)$ contains the interior of at most 
one sector of $W$, and
\item for every $u \in N(x)$,
the component $D$ of $V(G) \setminus (N(x) \setminus \{u\})$ such that 
$u \in V(D)$ contains the interiors of at most two sectors of $W$, 
and if $S_1,S_2$ are sectors with $S_i^* \subseteq V(D)$ for $i=1,2$, then 
$V(S_1) \cap V(S_2) \neq \emptyset$.
\end{enumerate}
\end{theorem}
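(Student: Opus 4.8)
The plan is to argue by contradiction via the following mechanism: if either conclusion fails, then two sector interiors are joined by a path $P$ running through the ``wrong'' part of $G$, and rerouting the rim of $W$ along $P$ produces a wheel centred at $x$ with strictly fewer spokes; I then show this wheel is again proper, contradicting the minimality of $W$, or else that $P$ together with $W$ yields an $\iskfour$, a $K_{3,3}$ or a triangle. Write the spokes of $W$ as $s_1 \c s_k$ in cyclic order, $k\geq 3$, and $S_1 \l S_k$ for the sectors, $S_t$ joining $s_t$ to $s_{t+1}$ (indices mod $k$). Since $G$ is triangle-free the spokes are pairwise non-adjacent, so each sector has length at least two; hence every $S_t^{*}$ is non-empty, $S_t^{*}\cap N[x]=\emptyset$, and distinct sector interiors lie in distinct components of $G|(V(C)\setminus N(x))$.

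For~(1): suppose a component $K$ of $G\setminus N(x)$ meets two sector interiors; among all such pairs choose $(S_i,S_j)$ minimising the distance $d$ from $S_i^{*}$ to $S_j^{*}$ in $G\setminus N(x)$, and let $P=p\c q$ realise it, $p\in S_i^{*}$, $q\in S_j^{*}$. Then $V(P)\cap V(C)=\{p,q\}$, $x\notin V(P)$, $p\not\sim q$, and --- crucially --- from minimality of $d$, properness of $W$, and triangle-freeness one extracts tight restrictions on how $P$ attaches to $C$: no interior vertex of $P$ has a neighbour in $\bigcup_t S_t^{*}$, except that the $P$-neighbour $p'$ of $p$ has at most one neighbour on $C$ besides $p$ (lying in $V(S_i)$), the $P$-neighbour $q'$ of $q$ has at most one besides $q$ (in $V(S_j)$), and each remaining interior vertex of $P$ has at most two neighbours on $C$, which are then consecutive spokes. (For instance, the properness clause ``at least three neighbours on $C$ forces adjacency to $x$'', together with $P^{*}\cap N(x)=\emptyset$, forbids $p'$ from being adjacent to both spokes of $S_i$.) Let $A$, $B$ be the two arcs of $C$ between $p$ and $q$, with $a$, $b=k-a$ spokes; both $a,b\geq 1$. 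If $a\leq 2$ and $b\leq 2$ then $k\leq 4$: taking the arc with $\geq 2$ spokes, the two consecutive spokes $s_m,s_{m+1}$ on it and the sector $S_m$ between them (with $m\notin\{i,j\}$, so $P^{*}$ is anticomplete to $S_m^{*}$), together with $x$, a subpath of $S_j$, the edge from $x$ to a spoke of $S_j$, the rim-arc from $s_{m+1}$ (or $s_m$) to $q$, and the subpath of $S_i$ from $s_i$ (or $s_{i+1}$) to $p$ followed by $P$, form an $\iskfour$ with branch vertices $x,s_m,s_{m+1},q$ --- the restrictions above exactly rule out chords. Otherwise some arc, say $A$, has $a\geq 3$; let $C'$ be the cycle formed by $A$ together with $P$. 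Then the only neighbours of $x$ on $C'$ are the $a<k$ spokes of $A$, and a direct check --- using properness of $W$ and the fact that every rim-vertex outside $C'$ has at most one neighbour on $C'$ --- shows $(C',x)$ is a proper wheel provided $C'$ is a hole, contradicting the minimality of $W$. If $C'$ is not a hole, a chord of it comes from an interior vertex of $P$; one then either switches to the arc $B$ (which, by the restrictions, is not chorded in the same way), or uses the chord to shorten $P$, contradicting the choice of $d$, or builds an $\iskfour$ as above.

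For~(2): fix $u\in N(x)$ and let $D$ be the component of $G\setminus(N(x)\setminus\{u\})$ containing $u$ (note $x\in V(D)$). If $u=s_r$ lies on $C$, the sectors $S_{r-1},S_r$ incident to $s_r$ already have their interiors in $D$ via $s_r$ and share the spoke $s_r$, so it remains to exclude a third sector interior from $D$; if $u\notin V(C)$ one first notes that any sector interior lying in $D$ is joined to $u$ inside $D$, which yields the same type of configuration. In either case, a forbidden third sector interior $S_\ell^{*}\subseteq V(D)$ gives a shortest path $Q$ in $D$ from $S_\ell^{*}$ to the allowed part; if $Q$ avoids $u$ it is a path of the kind excluded in~(1), a contradiction, and if $Q$ passes through $u$, then --- since $u\in N(x)$ --- the edge $xu$, the two halves of $Q$ at $u$, the sector $S_\ell$ and appropriate rim-arcs again produce an $\iskfour$, a $K_{3,3}$, or a proper wheel centred at $x$ with fewer than $k$ spokes; this last alternative is precisely where the minimality of $W$ is used. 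Hence~(2) holds.

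The main obstacle is the chord bookkeeping: arranging that the rerouted cycle $C'$ (or the cycle built through $u$) is a hole, and that the resulting wheel is proper, or else extracting from an unavoidable chord a forbidden subgraph or a shorter path. Proving properness of the reroute once its rim is a hole is routine --- it follows from properness of $W$ essentially sector by sector --- so the real work is getting a hole at all, which requires playing the two arcs $A,B$ off against each other, possibly re-choosing $p,q$ or $P$, and using both the ``three neighbours'' clause of properness and triangle-freeness to pin down the attachments of $P$ to $C$; the degenerate cases ($k=3$, consecutive sectors, $u$ off the rim) each need short separate checks.
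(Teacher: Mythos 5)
Your high-level skeleton matches the paper's: reduce both conclusions to a statement about paths outside $W$ attaching to at most one sector (resp.\ two consecutive sectors), take a minimal offending path $P$, reroute the rim along $P$ to obtain a wheel $(C',x)$ with fewer spokes, and contradict the minimality of $W$. The fatal gap is your claim that once $C'$ is a hole, ``a direct check \ldots shows $(C',x)$ is a proper wheel,'' and your closing remark that properness of the reroute ``is routine --- it follows from properness of $W$ essentially sector by sector.'' This is false, and it is precisely where almost all of the work lies. Properness of $W$ constrains only how vertices attach to $V(C)$; it says nothing about how a vertex $v\in V(G)\setminus(V(W)\cup V(P))$ attaches to $P$. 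Such a $v$ may have a neighbour in $P^{*}$ (hence in the interior of the large new sector $S_0$ of $(C',x)$) and also neighbours in a sector of $W$ lying on the surviving arc, far from $S_0$; then $v$ violates properness of $(C',x)$ even though it is perfectly proper for $W$. Since the minimality of $W$ is minimality among \emph{proper} wheels, a non-proper wheel with fewer spokes yields no contradiction, and your argument stops there. (Your auxiliary claim that ``every rim-vertex outside $C'$ has at most one neighbour on $C'$'' is also unjustified: rim vertices of the discarded arc can have neighbours in $P^{*}$, a case the paper must treat explicitly.)

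The paper's proof of its Theorem~\ref{paths} is devoted almost entirely to this situation: it classifies the vertices $v$ that violate properness of the rerouted wheel (claims (\ref{casesnonbr})--(\ref{cases}) and the case analysis that follows), pins down their unique neighbour on $P$, and eliminates each configuration by a linkage argument via Lemma~\ref{nolink}. Moreover, even this is not enough: some violating vertices (the ``non-offensive'' ones) cannot be eliminated, and the paper needs Lemmas~\ref{lem:many-nbrs}, \ref{lem:one-almost} and~\ref{lem:two-almost} to show that a wheel that is proper only after deleting such vertices can still be converted into a genuinely proper wheel with no more spokes, so that minimality does give a contradiction. None of this machinery, nor any substitute for it, appears in your proposal, so the central step is missing rather than merely compressed.
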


Using Theorem~\ref{wheelmain0}  we can prove a variant of a conjecture from 
\cite{trotignon} that we now explain. For a graph $G$ and $x, y \in V(G)$, we say that $(x,y)$ is a \emph{non-center pair} for $G$ if neither $x$ nor $y$ is the center of a proper wheel in $G$, and $x=y$ or $xy \in E(G)$. We prove:

\begin{theorem} \label{thm:main0} Let $G$ be an $\isktk$-free graph which is not series-parallel, and let $(x,y)$ be a non-center pair for $G$. Then 
some $v \in V(G) \setminus (N[x] \cup N[y])$ has degree at most two. 
\end{theorem}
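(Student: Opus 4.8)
The plan is to find a well-chosen proper wheel and apply Theorem~\ref{wheelmain0}. First I note that $G$ contains a proper wheel: since $G$ is triangle-free, every $K_{3,3}$ subgraph of $G$ is induced, so $K_{3,3}$-freeness means $G$ has no $K_{3,3}$ subgraph, and then Theorem~\ref{lem:sub} (using that $G$ is not series-parallel) gives a wheel in $G$; a cleaning argument — take a wheel with the fewest spokes and observe that a vertex witnessing non-properness must produce a $K_{3,3}$, an $\isk$, or a wheel with fewer spokes, possibly after rerouting the rim — upgrades this to a proper wheel. Fix a proper wheel $W=(C,z)$ with the minimum number of spokes $k$ among all proper wheels of $G$. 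Since $G$ is triangle-free, $N(z)$ is a stable set, so no sector of $W$ is a single edge and hence each of the $k$ sectors $S_1,\dots,S_k$ has a nonempty interior $S_i^*$; moreover a wheel with exactly three spokes would be an $\isk$, so $k\ge 4$. Finally, because $(x,y)$ is a non-center pair we have $z\notin\{x,y\}$, so $W$ is a proper wheel whose centre avoids $\{x,y\}$ and Theorem~\ref{wheelmain0} applies to it.

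Next I analyse the components of $G\setminus N(z)$. Each $S_i^*$ is connected and disjoint from $N(z)$, so it lies in a single component, and by Theorem~\ref{wheelmain0}(1) no component contains more than one sector interior; hence there are $k$ pairwise distinct components $D_1,\dots,D_k$ with $S_i^*\subseteq V(D_i)$. Say that $w\in V(G)$ is \emph{attached to} $D_i$ if $w\in V(D_i)$, or $w$ is one of the two spokes $s_i,s_{i+1}$ bounding $S_i$, or $w$ has a neighbour in $V(D_i)$. If $w\notin N(z)$ and $w\ne z$, then all neighbours of $w$ lie in $N(z)$ together with the single component of $G\setminus N(z)$ containing $w$, so $w$ is attached to at most one $D_i$. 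If $w\in N(z)$, then for each $i$ for which $w$ is attached to $D_i$ (note that $s_i$ and $s_{i+1}$ have neighbours in $V(D_i)$) the set $V(D_i)$, and in particular $S_i^*$, lies in the component of $G\setminus(N(z)\setminus\{w\})$ containing $w$, so by Theorem~\ref{wheelmain0}(2) at most two indices $i$ arise. Now $z\notin\{x,y\}$, and since $G$ is triangle-free at most one of $x,y$ can lie in $N(z)$ (two adjacent common neighbours of $z$ would form a triangle, while $x=y$ is allowed); hence $x$ and $y$ are attached, in total, to at most $2+1=3$ of the $D_i$. As $k\ge 4$, some component $D:=D_i$ is attached to neither $x$ nor $y$; writing $\{s,t\}=\{s_i,s_{i+1}\}$, this gives $V(D)\cap(N[x]\cup N[y])=\emptyset$, no vertex of $V(D)$ adjacent to $x$ or $y$, and $s,t\notin\{x,y\}$.

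It remains to extract the required vertex from the clean component $D$ with bounding spokes $s,t$. The argument above also shows — using Theorem~\ref{wheelmain0}(2) for a spoke neighbour of a vertex of $V(D)$, and $\isk$-freeness to exclude a non-spoke neighbour of $z$ — that every vertex of $V(D)$ has all of its neighbours in $V(D)\cup\{s,t\}$; thus, setting $H:=G|(V(D)\cup\{s,t\})$, the neighbourhoods in $G$ and in $H$ agree for vertices of $V(D)$. The graph $H$ is $\isktk$-free and triangle-free, with $s$ non-adjacent to $t$ (both are neighbours of $z$), and one shows $H$ is series-parallel: if not, then by Theorem~\ref{lem:sub} $H$ contains a wheel, whose centre must lie in $V(D)$, and one reaches a contradiction with the minimality of $k$ by building from it, together with $z$ and the sector $S_i$, a proper wheel of $G$ with fewer than $k$ spokes. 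Granting that $H$ is series-parallel, a short argument about triangle-free series-parallel graphs yields a vertex $v$ of $H$ with $\deg_H(v)\le 2$ and $v\in V(D)$; then $\deg_G(v)=\deg_H(v)\le 2$ since all neighbours of $v$ lie in $V(H)$. Finally $v\in V(D)$, $V(D)$ avoids $N[x]\cup N[y]$, and $s,t\notin\{x,y\}$, so $v\in V(G)\setminus(N[x]\cup N[y])$ has degree at most two, as required. I expect the crux to be proving that $H$ is series-parallel — equivalently, that no wheel (and hence, by Theorem~\ref{lem:sub}, no forbidden structure) survives in the clean component once its two bounding spokes are put back — with the extraction of the degree-two vertex inside $V(D)$ a smaller technical point.
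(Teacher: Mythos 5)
The first half of your argument is sound and matches the paper's opening moves: producing a proper wheel $W=(C,z)$ with the minimum number of spokes $k\ge 4$, noting $z\notin\{x,y\}$ and that at most one of $x,y$ lies in the stable set $N(z)$, and using Theorem~\ref{wheelmain0} to count attachments and isolate a component $D$ of $G\setminus N(z)$ containing a sector interior with $V(D)\cap(N[x]\cup N[y])=\emptyset$ and bounding spokes $s,t\notin\{x,y\}$. The proof breaks at the next step: the claim that every vertex of $V(D)$ has all its neighbours in $V(D)\cup\{s,t\}$ is false. Theorem~\ref{wheelmain0}(2) does rule out a \emph{spoke} $s_j$ with $j\notin\{i,i+1\}$ having a neighbour in $V(D)$ (that would force three sector interiors into one component of $G\setminus(N(z)\setminus\{s_j\})$), but a vertex $w\in N(z)\setminus V(C)$ can perfectly well have neighbours in $V(D)$: such a $w$ is proper for $W$ provided its rim-neighbours lie in one sector, it creates no induced $K_4$-subdivision, and Theorem~\ref{wheelmain0}(2) applied to $w$ is satisfied because its component still meets at most two sector interiors. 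Nothing bounds the number of such attachments; indeed the paper works throughout with the full attachment set $N_i\subseteq N(s)$ of a component and never assumes it consists of two spokes --- the bipartite girth analysis in \eqref{eq:girth8} and \eqref{eq:tree} exists precisely because it can be large. Once this claim fails, $H=G|(V(D)\cup\{s,t\})$ no longer preserves the degrees of vertices in $V(D)$, and the conclusion $\deg_G(v)=\deg_H(v)\le 2$ evaporates.

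The steps you flag as ``the crux'' also have problems, though they are downstream of the main gap. (i) If you enlarge $H$ to $G|(V(D)\cup N_D\cup\{z\})$, where $N_D$ is the full attachment set, then degrees of $V(D)$-vertices are preserved, but a series-parallel graph only guarantees a vertex of degree at most two \emph{somewhere}; it may lie in $N_D$, where its degree in $G$ is unbounded. Forcing it into $V(D)$ is exactly the job of Lemmas~\ref{lem:tree} and~\ref{lem:farcycle} and the girth bounds, and is not ``a smaller technical point.'' (ii) Your proposed contradiction when $H$ is not series-parallel does not close: a proper wheel with centre in $V(D)$ need not have fewer than $k$ spokes, and since $W$ was chosen with the minimum number of spokes over \emph{all} proper wheels, merely exhibiting another proper wheel contradicts nothing; you would have to construct one with strictly fewer spokes, and no such construction is given. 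The paper instead handles a non-series-parallel component by induction on $|V(G)|$ (claim~\eqref{eq:big}), concluding that $x$ or $y$ must live in it, and handles large series-parallel components by contraction (claim~\eqref{eq:sp}). Some recursion on smaller graphs --- either a minimal counterexample or the inductive Theorem~\ref{thm:girth} --- appears unavoidable, and your one-shot direct argument does not supply it.
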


Here is the outline of the proof; the full proof is given in Section~5. We assume that $G$ is a  counterexample to Theorem~\ref{thm:main0} with $|V(G)|$ minimum. Since $G$ is not series-parallel, it follows from Theorem~\ref{lem:sub} that $G$ contains a wheel, and we show in Lemma~\ref{proper} that $G$ contains a proper wheel. 
Let $s \in V(G)$ be the center of a proper wheel chosen as in Theorem~\ref{wheelmain0}, and let $C_1, \dots, C_k$ be the components of $G \setminus N[s]$. By Theorem~\ref{wheelmain0}, it follows that $k>1$. For each $i$, let $N_i$ be the set of
vertices of $N(s)$ with a neighbor in $V(C_i)$, and let 
$G_i=G|(V(C_i) \cup N_i \cup \{s\})$.  We analyze the structure of 
the graphs $G_i$ using the minimality of $|V(G)|$.
It turns out that at most one $G_i$ is not series-parallel, and that (by 
contracting $C_i$'s) there is at most one value of $i$ for which $|V(C_i)|>1$. Also, if $|V(C_i)| > 1$, then $\sset{x, y} \cap V(C_i) \neq \emptyset$. 
We may assume that $|V(C_i)| =1$ for all $i \in \{1, \dots, k-1\}$, and that $\sset{x, y} \cap V(C_k) \neq \emptyset$.
Now consider the bipartite graph $G'$, which (roughly speaking) is the graph obtained from $G \setminus \{s\}$ by
contracting $V(C_k) \cup N_k$ to a single vertex $z$ if $|V(C_k)|>1$. It turns out that $G'$ is $\iskk$-free and has girth at least $6$, while cycles that do not contain $z$ must be even longer. Now either there is an easy win, or we find a cycle in $G'$ that contains a long path $P$ of vertices all of degree two in $G'$ and with $V(P) \subseteq V(G) \setminus (N[x] \cup N[y])$. Further analysis shows that at least one of these vertices has degree two in $G$, and Theorem~\ref{thm:main0} follows. 

This paper is organized as follows. In Section~2 we prove Theorem~\ref{wheelmain0}. Section~3 contains technical tools that we need to deduce that the graph $G'$ described above has various useful properties. In Section~4 we develop techniques to produce a cycle with a long path of vertices of degree two. In Section~5 we put all of our knowledge together to prove Theorems~\ref{v2} and~\ref{thm:main0} and deduce 
Conjecture~\ref{chi3}.

Let us finish this section with an easy fact about $\isk$-free graphs.
Given a hole $C$ and a vertex $v \not \in C$, $v$ is {\em linked} to $C$
if there are three paths $P_1,P_2,P_3$ such that
\begin{itemize}
\item $P_1^* \cup P_2^* \cup P_3^* \cup \{v\}$ is disjoint from $C$;
\item each $P_i$ has one end  $v$ and the other end in $C$, and there are 
no other edges between $P_i$ and $C$;
\item for $i, j \in \sset{1,2,3}$ with $i \neq j$, $V(P_i) \cap V(P_j) = \sset{v}$;
\item if $x \in P_i$ is adjacent to $y \in P_j$ then either $v \in \{x,y\}$
or $\sset{x,y} \subseteq V(C)$; and 
\item if $v$ has a neighbor $c \in C$, then $c \in P_i$ for some $i$.
\end{itemize}

\begin{lemma}
\label{nolink}
If $G$ is $\isk$-free, then no vertex of $G$ can be linked to a hole.
\end{lemma}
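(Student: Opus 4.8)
The plan is to derive a contradiction by exhibiting an induced subdivision of $K_4$ inside $G$. Suppose $v$ is linked to a hole $C$ via paths $P_1,P_2,P_3$, and for $i\in\{1,2,3\}$ let $c_i$ be the end of $P_i$ on $C$. By the third bullet of the definition, $V(P_i)\cap V(P_j)=\{v\}$ whenever $i\ne j$, and since $c_i\ne v$ this forces $c_1,c_2,c_3$ to be pairwise distinct; hence they split $C$ into three arcs $Q_1,Q_2,Q_3$, each of length at least one, where $Q_k$ is the arc joining $c_i$ and $c_j$ for $\{i,j,k\}=\{1,2,3\}$.

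Next I would verify that $H:=G|(V(C)\cup V(P_1)\cup V(P_2)\cup V(P_3))$ has no edges beyond those of $C$ and of $P_1,P_2,P_3$. This is where the bullets of the definition are used: $C$ is a hole, hence chordless; each $P_i$ is an induced path; by the fourth bullet any edge joining $V(P_i)$ and $V(P_j)$ with $i\ne j$ is incident to $v$ or has both ends on $C$, so it is already an edge of some $P_\ell$ or of $C$; by the first two bullets $V(P_i)\cap V(C)=\{c_i\}$ and no interior vertex of $P_i$ has a neighbour on $C$; and by the fifth bullet every neighbour of $v$ on $C$ lies on some $P_i$ and so equals $c_i$, in which case $P_i$ must be the single edge $vc_i$ because $P_i$ is an induced path from $v$ to $c_i$ — so that edge too belongs to $P_i$.

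Given this, in $H$ every vertex other than $v,c_1,c_2,c_3$ has degree exactly two (the interior vertices of the $P_i$, and the interior vertices of the arcs $Q_k$), the vertex $v$ has exactly three neighbours (the second vertices of $P_1,P_2,P_3$, which are pairwise distinct and, by the above, exhaust $N(v)\cap V(H)$), and each $c_i$ has exactly three neighbours (its two neighbours on $C$ and its predecessor on $P_i$, all distinct). Contracting $P_1,P_2,P_3,Q_1,Q_2,Q_3$ — six pairwise internally disjoint paths, each of length at least one — thus turns $H$ into $K_4$ with branch vertices $v,c_1,c_2,c_3$. Hence $H$ is an $\isk$, contradicting the hypothesis that $G$ is $\isk$-free.

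I do not expect a real obstacle here: the proof is a routine check that the five conditions defining ``linked'' rule out exactly the adjacencies that would create a chord of the $K_4$-subdivision. The one place calling for slight care is excluding a chord of the form $vc_i$ when $P_i$ has length at least two, and this is immediate from $P_i$ being an induced path.
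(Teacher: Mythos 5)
Your proof is correct and is exactly the argument the paper has in mind (the paper states Lemma~\ref{nolink} as an ``easy fact'' and omits the verification): the five bullets in the definition of \emph{linked} are designed precisely so that $G|(V(C)\cup V(P_1)\cup V(P_2)\cup V(P_3))$ has no edges beyond those of $C$ and the $P_i$, making it an induced subdivision of $K_4$ with branch vertices $v,c_1,c_2,c_3$. Your handling of the one delicate point --- that a neighbour $c_i$ of $v$ on $C$ forces $P_i$ to be the single edge $vc_i$ because paths are induced by the paper's convention --- is also the right one.
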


\section{Wheels} 
\begin{lemma}
\label{proper}
Let $G$ be an $\iskt$-free graph that contains a wheel. Then there is a proper wheel in $G$.
\end{lemma}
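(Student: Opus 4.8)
The plan is to start from an arbitrary wheel $W=(C,x)$ in $G$ and repair the two defects in the definition of ``proper'' one at a time, using $\{\isk,\textnormal{triangle}\}$-freeness to control what can go wrong. First I would fix a wheel $W=(C,x)$ chosen to minimise some parameter — the natural choice is to take $|V(C)|$ minimum over all wheels with center $x$, and among those, perhaps to minimise the number of vertices outside $V(W)$ that violate the proper conditions; the exact minimality will be dictated by which inductive step needs it. The two things to verify are: (a) for every $v\notin V(W)$, the neighbours of $v$ on $C$ all lie in a single sector of $W$; and (b) any such $v$ with at least three neighbours on $C$ is adjacent to $x$.

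For condition (b): suppose $v\notin V(W)$ has at least three neighbours on $C$ but is non-adjacent to $x$. Then $v$ together with three of its neighbours on $C$ would let us build an $\isk$ (with $v$ and $x$ as the two degree-$3$ branch vertices, the three spokes from $x$ together with the three appropriate arcs of $C$ as the paths), unless the configuration degenerates — and the degeneracies (two neighbours of $v$ being consecutive, or $v$ having a neighbour equal to a spoke, etc.) are exactly where triangle-freeness and the hole structure of $C$ kick in to still force an $\isk$ or a contradiction. This is the kind of case analysis that Lemma~\ref{nolink} is tailored for: if $v$ has three neighbours on $C$ that are ``spread out'', $v$ is linked to $C$, contradicting Lemma~\ref{nolink}; so the neighbours must be confined, and then one checks the remaining small cases by hand.

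For condition (a): if some $v\notin V(W)$ has two neighbours on $C$ lying in different sectors, then $v$ ``sees across'' a spoke, and I would combine $v$, the relevant arc(s) of $C$, the spoke(s) of $x$, and $x$ itself to produce either an $\isk$, a triangle, or — crucially — a smaller wheel with center $x$, contradicting minimality of $|V(C)|$. The point of the minimality is precisely that when the forbidden configuration is not directly an $\isk$, one can reroute through $v$ to get a hole $C'$ with fewer vertices on which $x$ still has at least three neighbours; triangle-freeness is what prevents the rerouting from collapsing $C'$ below length $4$. After (a) and (b) hold for the chosen $W$, $W$ is by definition a proper wheel, and we are done. If a single wheel cannot be repaired directly, the fallback is an iterative/extremal argument: repeatedly replace $W$ by a ``more proper'' wheel and show the process terminates, with the terminal wheel being proper.

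The main obstacle I expect is condition (a) together with getting the minimality parameter right: one has to be sure that the wheel produced by rerouting through a bad vertex $v$ is genuinely a wheel (center still has $\ge 3$ spokes) and genuinely smaller, across all the ways $v$'s neighbourhood can interact with the spokes and with the sectors — in particular when $v$ has neighbours in two sectors that share a spoke, versus two sectors that do not. Handling (b) is comparatively routine once Lemma~\ref{nolink} is in hand; (a) is where the bookkeeping lives.
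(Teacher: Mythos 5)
Your overall strategy coincides with the paper's: pick a wheel with shortest rim and use Lemma~\ref{nolink} together with rerouting arguments to rule out offending vertices. However, as written the proposal has a concrete flaw in the extremal parameter. You minimise $|V(C)|$ over wheels \emph{with center $x$}, but the key first reduction — the one that disposes of a vertex $v$ with at least three neighbours inside a single sector $S$ — produces the wheel $(x\hbox{-}S\hbox{-}x,\,v)$, whose center is $v$, not $x$. That wheel has a strictly shorter rim (each sector has at least three vertices by triangle-freeness, so $|V(S)|+1<|V(C)|$), but it only contradicts minimality if you minimise over \emph{all} wheels in $G$, letting the center change. With your restricted minimality this step fails, and it is exactly this step that lets one assume $v$ has at most two neighbours per sector (at most one if $v$ is adjacent to $x$), after which both defects (a) and (b) collapse into the single configuration ``$v$ has neighbours in two distinct sectors,'' which is what the linkage case analysis then handles.

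Relatedly, your sketch for condition (b) does not go through as described. A vertex $v$ with at least three neighbours on $C$ and non-adjacent to $x$ yields an $\isk$ via Lemma~\ref{nolink} only when $v$ has \emph{exactly} three neighbours on $C$ (the definition of linking requires every neighbour of $v$ on the hole to lie on one of the three paths); with four or more neighbours, $(C,v)$ is merely another wheel, and the configuration you describe with ``$v$ and $x$ as the two degree-$3$ branch vertices'' is not a $K_4$ subdivision. Escaping this case again requires the smaller wheel centered at $v$, i.e., the unrestricted minimality. Finally, the substance of this lemma is the case analysis itself — which sectors $S_1,S_2$ are consecutive or separated by a third sector, whether $v$ has one or two neighbours in each, and whether $v$ is adjacent to $x$ — and the proposal defers all of it to ``check the remaining small cases by hand.'' The plan is salvageable once the minimality is taken over all wheels, but in its current form it would not compile into a proof.
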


\begin{proof}
Let $G$ be an $\iskt$-free graph.
Let $W=(C,x)$ be a wheel in $G$ with $|V(C)|$ minimum. We claim that $W$ is a 
proper wheel. Suppose $v \in V(G) \setminus V(W)$ violates the definition of a 
proper wheel. 

If $v$ has at least three neighbors in  the hole $x-S-x$ for some sector $S$ of $W$, then $(x-S-x,v)$ is a wheel with shorter rim than 
$W$, a contradiction. So $v$ has at most two neighbors in every sector of $W$
(and at most one if $v$ is adjacent to $x$).  Therefore there exist sectors
$S_1,S_2$ of $W$ such that $v$ has a neighbor in $V(S_1) \setminus V(S_2)$
and a neighbor in $V(S_2) \setminus V(S_1)$. Also by the minimality 
of $|V(C)|$, every  path of $C$ whose ends are in $N(v)$ and with interior 
disjoint from $N(v)$ contains at most two spokes of $W$, and
we can choose $S_1,S_2$ and for $i=1,2$, label the ends of $S_i$ as $a_i,b_i$ such that either $b_1=a_2$,
or $b_1,a_2$ are the ends of a third sector $S_3$ of $W$ and $v$ has no neighbor
in $S_3^*$. If possible, we choose $S_1, S_2$ such that $b_1 = a_2$. 
If $v$  has two neighbors in $S_1$, denote them $s,t$ 
such that $a_1,t,s,b_1$ are in order in $C$. If $v$ has a unique neighbor in $S_1$, denote it by $s$. Let $z$ be the neighbor of $v$ in $S_2$ closest to 
$a_2$.

Assume first that $v$ is non-adjacent to $x$.
Suppose  $b_1 \neq a_2$. By Lemma~\ref{nolink}, $x$ cannot be linked to the hole
$z-S_2-a_2-S_3-b_1-S_1-s-v-z$, and it follows that $z \neq b_2$.
If $v$ has two neighbors in $S_1$, then  $v$ can be 
linked to $x-S_3-x$ via the paths $v-s-S_1-b_1$, $v-t-S_1-a_1-x$, $v-z-S_2-a_2$; and if $v$ has a 
unique neighbor in $S_1$, then $s$ can be linked to $x-S_3-x$ via the paths
$s-S_1-b_1$, $s-S_1-a_1-x$, $s-v-z-S_2-a_2$ (note that by the choice of $S_1, S_2$ and since $b_1 \neq a_2$, it follows that $s \neq b_1$). In both cases, this is contrary to Lemma~\ref{nolink}.
This proves that $b_1=a_2$. Let $y$ be the neighbor of $v$ in $S_2$ closest 
to $b_2$.
Now if $v$ has two neighbors in $S_1$, then
$v$ can be linked to $x-S_1-x$ via the paths $v-s$, $v-t$, $v-y-S_2-b_2-x$,
contrary to Lemma~\ref{nolink}. So $v$ has a unique neighbor in $S_1$, and similarly a
unique neighbor in $S_2$. It follows that $s, b_1$ and $z$ are all distinct. Now we can link $x$ to $s-S_1-b_1-S_2-z-v-s$ via the
paths $x-b_1$, $x-a_1-S_1-s$, and $x-b_2-S_2-z$, contrary to Lemma~\ref{nolink}.

This proves that $v$ is adjacent to $x$, and so $v$ has at most one neighbor 
in every sector of $W$. If $b_1 \neq a_2$, then $v$ can be linked to 
$x-S_3-x$ via the paths  $v-s-S_1-b_1$, $v-x$, $v-z-S_2-a_2$, and if
$b_1=a_2$, then $s, b_1$ and $z$ are all distinct and hence $x$ can be linked to the hole $s-S_1-b_1-S_2-z-v-s$ via the
paths $x-b_1$, $x-v$, and $x-b_2-S_2-z$; in both cases contrary to Lemma~\ref{nolink}.
This proves that every $v \in V(G) \setminus V(W)$ satisfies the condition in the 
definition, and so $W$ is a proper wheel in $G$. 
\end{proof} 

Let $W = (C,v)$ be a wheel. We call $x$ \emph{proper} for $W$ if either $x \in V(C) \cup \sset{v}$; or  
\begin{itemize}
\item all neighbors of $x$ in $V(C)$ are in one sector of $W$; and 
\item if $x$ has more than two neighbors in $V(C)$, then $x$ is adjacent to $v$. 
\end{itemize}

A vertex $x$ is \emph{non-offensive} for a wheel $W=(C,v)$ if  there exist two sectors $S_1, S_2$ of $W$ such that
\begin{itemize}
\item $x$ is adjacent to $v$;  
\item $x$ has neighbors in $S_1$ and in $S_2$;  
\item $N(x) \cap V(C) \subseteq V(S_1) \cup V(S_2)$;  
\item $S_1$ and $S_2$ are consecutive; and 
\item if $u \in V(G) \setminus V(W)$ is adjacent to $x$, then $N(u) \cap V(C) \subseteq V(S_1) \cup V(S_2)$.  
\end{itemize}
If $V(S_1) \cap V(S_2)=\{a\}$, we also say that  $x$ is {\em $a$-non-offensive}.

\begin{lemma} \label{lem:many-nbrs}
Let $G$ be an $\iskt$-free graph. Let $W = (C,v)$ be a wheel in $G$. Let $S_1, S_2$ be consecutive sectors of $W$, and let $x \in N(v) \setminus V(W)$ be a vertex such that $N(x) \cap V(C) \subseteq V(S_1) \cup V(S_2)$ and 
$N(x) \cap V(S_1), N(x) \cap V(S_2) \neq \emptyset$. 
Then $N(x) \cap V(C) \subseteq S_1^* \cup S_2^*$ and for $\sset{i, j} = \sset{1,2}$, $|N(x) \cap (V(S_i) \setminus V(S_j))| \geq 3$. 
\end{lemma}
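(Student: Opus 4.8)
The plan is to establish the containment $N(x) \cap V(C) \subseteq S_1^* \cup S_2^*$ by a one-line triangle-freeness argument, and then to obtain the lower bounds by exhibiting an induced subdivision of $K_4$ whenever $x$ has at most two neighbors in one of the two sector-interiors.

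For the containment: since $S_1$ and $S_2$ are consecutive they share exactly one endpoint $a$; let $b$ be the other end of $S_1$ and $c$ the other end of $S_2$. As $S_1,S_2$ are sectors, their interiors contain no spokes, so the only spokes in $V(S_1)\cup V(S_2)$ are $a,b,c$. Each of $a,b,c$ is adjacent to the center $v$, and $x$ is adjacent to $v$; since $G$ is triangle-free, $x$ is adjacent to none of $a,b,c$. Hence $N(x)\cap V(C)\subseteq (V(S_1)\cup V(S_2))\setminus\{a,b,c\}=S_1^*\cup S_2^*$, which is the first conclusion, and moreover $N(x)\cap(V(S_i)\setminus V(S_j))=N(x)\cap S_i^*$ for $\{i,j\}=\{1,2\}$. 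Combined with the hypotheses that $x$ has a neighbor in each of $V(S_1)$ and $V(S_2)$, we get that $x$ has at least one neighbor in each of $S_1^*$ and $S_2^*$; in particular both $S_1$ and $S_2$ have length at least two.

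By symmetry it then suffices to show $|N(x)\cap S_1^*|\geq 3$, so I would suppose for contradiction that $x$ has exactly one or exactly two neighbors in $S_1^*$. Write $S_1$ as $a=u_0\d u_1\c u_p=b$ with $p\geq 2$. Since $v$ is adjacent to $u_0$ and $u_p$ but to no interior vertex of $S_1$, the set $u_0\d u_1\c u_p\d v\d u_0$ is a hole, call it $H$. If $x$ has exactly two neighbors $u_m,u_n$ in $S_1^*$ with $m<n$, then, using triangle-freeness again, $u_m,u_n,v$ are exactly the neighbors of $x$ on $H$; joining $x$ to each of them by an edge and taking the three arcs into which $v,u_m,u_n$ split $H$ yields a subdivision of $K_4$ with branch vertices $x,v,u_m,u_n$. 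Using that $C$ is an induced cycle and that $v$ and $x$ have no further neighbors among $u_0,\dots,u_p$, this subdivision is induced, hence an $\isk$ — contradicting that $G$ is $\iskt$-free. If instead $x$ has a unique neighbor $u_m$ in $S_1^*$, I would bring in the neighbor of $x$ in $S_2^*$: writing $w_1,\dots,w_{q-1}$ for the interior vertices of $S_2$ in order from $a$, let $w_k$ be the one adjacent to $x$ with $k$ smallest. Then $x\d w_k\c w_1\d a$ is a path attaching $x$ to $u_0=a$ with interior disjoint from $H$, so $x$ has three attachment points $v,u_m,u_0$ on $H$, and the three corresponding arcs of $H$ together with these attachments form a subdivision of $K_4$ with branch vertices $x,v,u_0,u_m$. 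One checks it is induced — the only potential chords involve $w_1,\dots,w_k$, and these are excluded because $C$ is an induced cycle, $S_1$ and $S_2$ meet only in $a$, the interior of $S_2$ contains no spoke, and $w_k$ was chosen closest to $a$. Again this is an $\isk$, a contradiction. (Both contradictions can alternatively be phrased via Lemma~\ref{nolink}, linking $x$ to $H$.)

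The only part requiring genuine care is the unique-neighbor case, where the candidate $\isk$ mixes vertices of $S_1$, vertices of $S_2$, the center $v$, and $x$, so one must rule out all chords between the $S_2$-portion and the rest; this is precisely where the choice of $w_k$ closest to $a$, together with the fact that consecutive sectors meet only in their common spoke and that sector interiors contain no spokes, does the work. Everything else is bookkeeping.
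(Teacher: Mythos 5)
Your proof is correct and follows essentially the same route as the paper: triangle-freeness with the center $v$ gives the containment in $S_1^*\cup S_2^*$, the two-neighbor case gives a vertex with exactly three neighbors on the hole $v\d S_1\d v$, and the one-neighbor case links $x$ to that hole via the path through $S_2^*$ to the shared spoke $a$ --- which is precisely the paper's argument, phrased via an explicit induced $K_4$-subdivision instead of citing Lemma~\ref{nolink} directly. The chord-checking you flag as the delicate point is handled the same way in both versions, by choosing the $S_2$-neighbor closest to the common spoke.
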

\begin{proof}
Since $x$ is adjacent to $v$ and $G$ is triangle-free, it follows that
$N(x) \cap V(C) \subseteq S_1^* \cup S_2^*$.
Suppose for a contradiction that $|N(x) \cap (V(S_1) \setminus V(S_2))| \leq 2$.If  $|N(x) \cap (V(S_1) \setminus V(S_2))| = 2$, then $x$ has exactly three neighbors in the hole $v - S_1 - v$, contrary to Lemma~\ref{nolink}. It follows that $|N(x) \cap (V(S_1) \setminus V(S_2))| = 1$. Let $z$ denote the neighbor of $x$ in $V(S_1)$.  Let $\sset{w} = V(S_1) \cap V(S_2)$, and let $y$ denote the neighbor of $x$ in $V(S_2)$ closest to $w$ along $S_2$. Then $x$ can be linked to the hole $v-S_1-v$ via the three paths $x-v$, $x-z$, and $x-y-S_2-w$. This is a contradiction to Lemma~\ref{nolink}, and the result follows. 
\end{proof}

We say that  wheel $W=(C,v)$ is \emph{$k$-almost proper} if there are spokes $x_1, \dots, x_k$ of $W$ and a set $X \subseteq V(G) \setminus V(W)$ such that
\begin{itemize}
\item no two spokes in $\sset{x_1, \dots, x_k}$ are consecutive;
\item $W$ is proper in $G\setminus X$;
\item for every $x$ in $X$ there exists $i$ such that $x$ is $x_i$-non-offensive.
\end{itemize}

\begin{lemma}\label{lem:one-almost}
Let $G$ be an $\iskt$-free graph, and let $W=(C,v)$ be a 1-almost proper wheel in $G$. Let $x_1$ and $X$ be as in the definition of a 1-almost proper wheel, and let $S_1$ and $S_2$ be the sectors of $W$ containing $x_1$.

Then there exists a proper wheel $W'$ in $G$ with center $v$ and the same number of spokes as $W$. Moreover, either $W=W'$, or $V(W') \setminus V(W) = \sset{x^*}$ where $x^*$ is a non-offensive vertex for $W$, and $V(W) \setminus V(W') \subseteq V(S_1^*) \cup V(S_2^*) \cup \sset{x_1}$. 
\end{lemma}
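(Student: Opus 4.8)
The plan is to reduce to one carefully chosen rerouting. If $W$ is already proper (in particular if $X=\emptyset$) take $W'=W$; so assume $W$ is not proper, and let $X$ be exactly the set of vertices of $V(G)\setminus V(W)$ violating the definition of a proper wheel, so $X\neq\emptyset$ and, by hypothesis, every $x\in X$ is $x_1$-non-offensive. Write $S_1=c_0\d c_1\c c_k$ with $c_0=a_1$ and $c_k=x_1$, and $S_2=d_0\d d_1\c d_m$ with $d_0=x_1$ and $d_m=b_2$. By Lemma~\ref{lem:many-nbrs} every $x\in X$ has at least three neighbours in $S_1^*$ and at least three in $S_2^*$; for $x\in X$ let $f(x)$ be the least $i\ge 1$ with $xc_i\in E(G)$ and $g(x)$ the greatest $j\le m-1$ with $xd_j\in E(G)$. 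The plan is: (i) prove $f$ is injective on $X$; (ii) let $x^*$ be the unique member of $X$ with $f(x^*)$ minimum, and obtain $C'$ from $C$ by replacing the subpath $c_{f(x^*)}\c x_1\c d_{g(x^*)}$ by $c_{f(x^*)}\d x^*\d d_{g(x^*)}$; (iii) verify $W'=(C',v)$ is a proper wheel as required.

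For (i), suppose $f(u)=f(x)=i$ with $u,x\in X$ distinct. As $G$ is triangle-free and $u,x\in N(v)$, $u\not\sim x$; and $vc_i\notin E(G)$ since $c_i\in S_1^*$; so $D:=u\d v\d x\d c_i\d u$ is a hole. Since $u$ and $x$ both have a neighbour in $S_2^*$, and $S_2^*$ induces a subpath of $C$, there is a shortest $u$--$x$ path $R$ in $G|(\{u,x\}\cup S_2^*)$; it is induced with $R^*\subseteq S_2^*$. Let $Q:=v\d c_0\d c_1\c c_i$, so $Q^*=\{c_0,c_1\l c_{i-1}\}$. Then $R^*$, $Q^*$ and $V(D)$ are pairwise disjoint; no vertex of $S_1$ has a neighbour in $S_2^*$; $v$ has no neighbour in $S_2^*$; and, since $f(u)=f(x)=i$, no vertex of $Q^*$ is adjacent to $u$ or $x$. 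Hence $D\cup R\cup Q$ is an induced subdivision of $K_4$ with branch vertices $u,v,x,c_i$, contradicting that $G$ is $\isk$-free. So $f$ is injective and $x^*$ is well-defined.

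For (ii)--(iii): since $N(x^*)\cap V(C)\subseteq S_1^*\cup S_2^*$ and $c_{f(x^*)},d_{g(x^*)}$ are the neighbours of $x^*$ on $S_1,S_2$ closest to $a_1,b_2$, the cycle $C'$ is induced and has length at least $4$; its only vertex off $C$ is $x^*$, and $V(C)\setminus V(C')\subseteq S_1^*\cup S_2^*\cup\{x_1\}$. The neighbours of $v$ on $C'$ are $x^*$ together with the spokes of $W$ other than $x_1$, so $W'$ is a wheel with the same number of spokes as $W$, its two sectors through $x^*$ being $S_1':=c_0\c c_{f(x^*)}\d x^*$ and $S_2':=x^*\d d_{g(x^*)}\c d_m$ and the rest being sectors of $W$. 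Thus $V(W')\setminus V(W)=\{x^*\}$, $x^*$ is non-offensive for $W$, and $V(W)\setminus V(W')\subseteq S_1^*\cup S_2^*\cup\{x_1\}$; it only remains to check that $W'$ is proper.

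Take $z\in V(G)\setminus V(W')$. If $z\in X\setminus\{x^*\}$ then $f(z)>f(x^*)$, so $z$ has no neighbour in $V(S_1')\setminus\{x^*\}$ and (as $z\sim v$) none in $\{b_2,x^*\}$, giving $N(z)\cap V(C')\subseteq V(S_2')$. If $z\in V(C)\setminus V(C')$ then $z\in S_1^*\cup S_2^*$ or $z=x_1$, and $z$ has at most one neighbour on $C'$ besides possibly $x^*$, all in $S_1'$ or $S_2'$. If $z\sim x^*$ and $z\notin X$ then $z\not\sim v$, so $z$ has at most two neighbours on $C$, all in $V(S_1)$, all in $V(S_2)$, or (when $W$ has three spokes) equal to $\{a_1,b_2\}$; two neighbours of $z$ in $V(S_1')\setminus\{x^*\}$ would, via the edges $x^*v$, $x^*c_{f(x^*)}$ and the appropriate subpath of $S_1$, force a triangle or an induced $K_4$-subdivision (symmetrically for $S_2'$), and the remaining configuration makes $x^*$ linked to the hole $v\d a_1\d z\d b_2\d v$; all impossible, so $N(z)\cap V(C')$ lies in $S_1'$ or $S_2'$. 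Otherwise $z\not\sim x^*$, so $N(z)\cap V(C')\subseteq N(z)\cap V(C)\subseteq V(S)$ for some sector $S$ of $W$, and $S$ is a sector of $W'$, or $S\in\{S_1,S_2\}$ and $N(z)\cap V(C')\subseteq V(S_1')$ or $V(S_2')$; the ``at least three neighbours $\Rightarrow$ adjacent to the centre'' condition transfers directly. Hence $W'$ is proper. The crux is step (i) --- producing the induced $K_4$-subdivision $D\cup R\cup Q$, which is exactly what lets one isolate a single good rerouting vertex $x^*$ --- and the third case of the properness check, where a handful of small induced $K_4$-subdivisions and one application of Lemma~\ref{nolink} are needed to control the neighbours of $x^*$, is the other point requiring care.
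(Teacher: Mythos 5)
Your proof is correct. The rerouting itself coincides with the paper's: the paper takes $P(x)$ to be the longest subpath of $G|(V(S_1)\cup V(S_2))$ with both ends in $N(x)$ --- its interior is exactly your deleted segment between $c_{f(x)}$ and $d_{g(x)}$ --- and replaces that interior by $x^*$. The genuine difference is the extremal choice of $x^*$ and the argument that the remaining vertices of $X$ become proper for $W'$. The paper maximizes $|V(P(x^*))|$: if some $y\in X$ were not proper for $W'$, Lemma~\ref{lem:many-nbrs} applied to $W'$ would give $y$ neighbours strictly beyond both ends of $P(x^*)$, so $V(P(x^*))\subsetneq V(P(y))$, a contradiction; no injectivity statement is needed and ties never arise. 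You instead prove that $f$ is injective by exhibiting an explicit induced subdivision of $K_4$ on $\{u,v,x,c_i\}\cup R^*\cup Q^*$ (the construction is sound, though your blanket claim that no vertex of $S_1$ has a neighbour in $S_2^*$ fails for $x_1$ itself; the vertices you actually use, $c_0,\dots,c_i$ with $i\le k-3$, have no such neighbours) and then minimize $f$, so that every other $y\in X$ loses all of its $S_1$-neighbours and has $N(y)\cap V(C')\subseteq V(S_2')$. Your route costs an extra structural lemma but makes the behaviour of the other $X$-vertices completely explicit; the paper's maximality argument is shorter and sidesteps the tie issue. The remaining properness checks --- vertices of the deleted segment, neighbours of $x^*$ outside $X$ (where the last clause of non-offensiveness confines their rim-neighbours to $V(S_1)\cup V(S_2)$, which is what rules out neighbours in a third sector), and vertices non-adjacent to $x^*$ --- parallel the paper's case analysis and are all justified.
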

\begin{proof}
 We may assume that $X \neq \emptyset$, for otherwise $W$ is proper in $G$. For $x \in X$, let $P(x)$ denote the longest path in $G|(V(S_1) \cup V(S_2))$ starting and ending in a neighbor of $x$. Let $x^* \in X$ be a vertex with $|V(P(x^*))|$ maximum among vertices in $X$, and let $Y$ denote interior of $P(x^*)$. Let $C' = G|((V(C) \cup \sset{x^*}) \setminus Y)$. It follows that $W' = (C', v)$ is a wheel. 
Moreover, $N(v) \cap V(C') = ((N(v) \cap V(C)) \setminus \sset{x_1}) \cup \sset{x^*}$, and therefore $W'$ has the same number of spokes as $W$. 

If $W'$ is proper, the result follows. Therefore, we may assume that there is a vertex $y \in V(G) \setminus V(W')$ that is not proper for $W'$. Let $S_1', S_2'$ denote the sectors of $W'$ containing $S_1 \setminus Y$ and $S_2 \setminus Y$, respectively. 

 Suppose first that $y \in V(W)$, and consequently $y \in Y$. Since $x^*$ has at least two neighbors in each of $S_1$ and $S_2$ by Lemma~\ref{lem:many-nbrs}, it follows that $|V(P(x^*))| \geq 4$. Consequently, either $N(y) \cap V(C') \subseteq S_1'$ or $N(y) \cap V(C') \subseteq S_2'$. Moreover, $N(y) \cap V(C') \subseteq N[x]$, and therefore $|N(y) \cap V(C')| \leq 1$. This implies that $y$ is proper for $W$, a contradiction. This proves that $y\not \in V(W)$. 

Next, we suppose that $y \in X$. It follows that $N(y) \cap V(C') \subseteq V(S_1') \cup V(S_2')$. Since $y$ is not proper for $W'$, but $y$ is adjacent to $v$, it follows that $N(y) \cap V(S_1'), N(y) \cap V(S_2') \neq \emptyset$. By Lemma~\ref{lem:many-nbrs}, $y$ has at least two neighbors in $S_1'$. But then $V(P(x^*)) \subsetneq V(P(y))$, a contradiction to the choice of $x^*$. This proves that $y \in V(G) \setminus (X \cup V(W))$. 

If $y \not\in N(x^*)$, then $N(y) \cap V(C') \subseteq N(y) \cap V(C)$, and since $y \not\in X$, it follows that $y$ is proper for $W$ and thus $y$ is proper for $W'$. Consequently $y \in N(x^*)$. Since $x^*$ is non-offensive for $W$, it follows that $N(y) \cap V(C) \subseteq V(S_1) \cup V(S_2)$. Since $y$ is proper for $W$, we may assume by symmetry that $N(y) \cap V(C) \subseteq V(S_1)$. It follows that $N(y) \cap V(C') \subseteq V(S_1')$. Since $y$ is adjacent to $x^*$ and $G$ is triangle-free, it follows that $y$ is non-adjacent to $v$, and hence $|N(y) \cap V(C)| \leq 2$. This implies that $|N(y) \cap V(C')| \leq 3$, and by Lemma~\ref{nolink}, it is impossible for $y$ to have exactly three neighbors in $C'$ since $G$ is $\isk$-free. Therefore, $|N(y) \cap V(C')| \leq 2$, and therefore $y$ is proper for $W'$. This is a contradiction, and it follows that $W'$ is proper in $G$, and hence $W'$ is the desired wheel. 
\end{proof}

\begin{lemma} \label{lem:two-almost} 
Let $G$ be an $\iskt$-free graph, and let $W=(C,v)$ be a 2-almost proper wheel in $G$. Then there exists a proper wheel in $G$ with center $v$ and at most the same number of spokes as $W$. 
\end{lemma}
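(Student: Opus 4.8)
The plan is to reduce to Lemma~\ref{lem:one-almost} by removing the two ``special'' spokes one at a time. Let $x_1,x_2$ and $X$ be as in the definition of a $2$-almost proper wheel, let $S_1,S_2$ be the sectors of $W$ incident to $x_1$, and let $T_1,T_2$ be the sectors incident to $x_2$. Partition $X$ into the set $X_1$ of vertices of $X$ that are $x_1$-non-offensive and $X_2:=X\setminus X_1$; by the definition of a $2$-almost proper wheel every vertex of $X_2$ is $x_2$-non-offensive. (No vertex is both $x_1$- and $x_2$-non-offensive: its neighbours in $V(C)$ would lie in $(V(S_1)\cup V(S_2))\cap(V(T_1)\cup V(T_2))$, a set of at most two spokes, yet it would need a neighbour in each of $S_1$ and $S_2$, forcing it to be adjacent to a spoke of $W$ and hence, with $v$, to form a triangle.) First I would pass to $G':=G\setminus X_2$: deleting vertices only shrinks the relevant quantifiers, so $W$ is $1$-almost proper in $G'$ with special spoke $x_1$ and set $X_1$ (it is proper in $G'\setminus X_1=G\setminus X$ by hypothesis). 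Lemma~\ref{lem:one-almost}, applied in $G'$, then yields a proper wheel $W'=(C',v)$ in $G'$ with the same number of spokes as $W$, where either $W'=W$ or $V(W')\setminus V(W)=\{x^*\}$ with $x^*$ non-offensive for $W$ and $V(W)\setminus V(W')\subseteq V(S_1^*)\cup V(S_2^*)\cup\{x_1\}$.

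Next I would show that $W'$ is $1$-almost proper in $G$ with special spoke $x_2$ and set $X_2$, and then finish by applying Lemma~\ref{lem:one-almost} once more, now in $G$. Because $x_1$ and $x_2$ are non-consecutive, $x_2\notin V(S_1)\cup V(S_2)$, so $x_2$ is still a spoke of $W'$ and the two sectors of $W'$ incident to $x_2$ are again $T_1$ and $T_2$; moreover $W'$ is proper in $G\setminus X_2=G'$ by the previous paragraph. For $x\in X_2$ one checks the conditions of ``$x_2$-non-offensive for $W'$'': $x\sim v$; the neighbours of $x$ in $T_1$ and $T_2$ persist; $T_1,T_2$ are consecutive in $W'$; and $N(x)\cap V(C')\subseteq V(T_1)\cup V(T_2)$, since $N(x)\cap V(C)\subseteq V(T_1)\cup V(T_2)$ and $x\not\sim x^*$ (both lie in $N(v)$ and $G$ is triangle-free). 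The one delicate condition is the last: every neighbour $u$ of $x$ with $u\notin V(W')$ must satisfy $N(u)\cap V(C')\subseteq V(T_1)\cup V(T_2)$. Here one first notes $u\notin V(W)$ --- otherwise $u\in V(W)\setminus V(W')\subseteq V(S_1^*)\cup V(S_2^*)\cup\{x_1\}$, which is disjoint from $V(T_1)\cup V(T_2)$, contradicting $u\in N(x)\cap V(C)\subseteq V(T_1)\cup V(T_2)$ --- so that $N(u)\cap V(C)\subseteq V(T_1)\cup V(T_2)$ by $x$ being $x_2$-non-offensive for $W$; since $V(C')=(V(C)\setminus Y)\cup\{x^*\}$ with $Y\subseteq V(S_1^*)\cup V(S_2^*)\cup\{x_1\}$, the condition can fail only if $u\sim x^*$.

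Thus the main obstacle is ruling out a vertex $u\notin V(W)$ adjacent both to some $x\in X_2$ and to $x^*$. The leverage available is: $u\notin X$ (if $u\in X_1$ then $u,x^*\in N(v)$ would be adjacent, a triangle; if $u\in X_2$ then, $N(v)$ being independent, $u=x$, and again $x,x^*\in N(v)$ would be adjacent); hence, by properness of $W$ in $G\setminus X$, the set $N(u)\cap V(C)$ lies in a single sector of $W$, which together with $N(u)\cap V(C)\subseteq(V(S_1)\cup V(S_2))\cap(V(T_1)\cup V(T_2))$ (from $u\sim x^*$ and $u\sim x$) forces $|N(u)\cap V(C)|\le 1$. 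From this one wants a contradiction with $\isk$-freeness --- for instance via Lemma~\ref{nolink} applied to the hole consisting of $x^*$, a short arc of $S_1$ or $S_2$, and $x_1$, together with the path $x-u-x^*$, or by exhibiting an induced subdivision of $K_4$ on $v$, $x$, $x^*$ and a spoke near $x_1$. I expect this to be the hardest part; note that the conclusion only claims ``at most the same number of spokes'', which suggests that in the exceptional configuration one may instead have to extract a wheel with centre $v$ and strictly fewer spokes (from the hole through $x$, $u$, $x^*$ and a piece of the rim near the spoke between $x_1$ and $x_2$) and recurse on the number of spokes. Once this is settled, $W'$ is $1$-almost proper in $G$ with $x_2$ and $X_2$, and a final application of Lemma~\ref{lem:one-almost} produces a proper wheel with centre $v$ and the same number of spokes as $W'$, hence at most that of $W$.
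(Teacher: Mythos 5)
Your first half tracks the paper's proof exactly: split $X$ into the $x_1$-non-offensive vertices $X_1$ and $X_2=X\setminus X_1$, apply Lemma~\ref{lem:one-almost} in $G\setminus X_2$ to get $W'$ and $x^*$, and then try to show $W'$ is $1$-almost proper in $G$ with special spoke $x_2$ and set $X_2$. You also correctly isolate the obstruction: a vertex $u\notin V(W)\cup X$ adjacent to both $x^*$ and some $x\in X_2$ that is not $x_2$-non-offensive for $W'$, and you correctly deduce $|N(u)\cap V(C)|\le 1$ from properness of $W$.

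The gap is that you stop there. The obstruction cannot be dismissed by a contradiction of the kind you sketch (your option (a)); the paper does not rule the configuration out. What it does is exactly your hedged option (b), and that step is roughly half the proof, not a remark: one first upgrades $|N(u)\cap V(C)|\le 1$ to $N(u)\cap V(C)=\emptyset$ by a linking argument on the cycle obtained from $C'$ by rerouting $s_3-S_3-x_2-S_4-s_4$ through $x$; then one forms the hole $D$ consisting of $u$, $x$, $x^*$ and the arc of $C'$ from $s_3$ to $x^*$ avoiding $x_2$, and observes that $W''=(D,v)$ is a wheel with strictly fewer spokes than $W$ (this is where the non-consecutiveness of $x_1,x_2$ is used, and why the lemma only claims ``at most'' as many spokes). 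Crucially, the paper does not recurse on the spoke count at this point: it proves directly that $W''$ is proper, via a case analysis on a hypothetical non-proper vertex $y$ ($y\notin V(W)\cup V(W')$, then $y\in N(u)$, then $y$ has no neighbour in the other arc $P_2$, then a final linkage into the hole $x^*-P_2-x-u-x^*$). That analysis in turn requires an extremal choice you never make: among all vertices of $X_2$ that are not $x_2$-non-offensive for $W'$, $x$ must be chosen so that the arc of $C$ from $s_3$ to $s_4$ through $x_2$ is maximal. Without carrying out this construction and the properness verification of $W''$, the proof is incomplete at its hardest point, as you yourself acknowledge.
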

\begin{proof}
Let $x_1, x_2$ and $X$ be as in the definition of a 2-almost proper wheel, and let $S_1, S_2$ be the sectors of $W$ containing $x_1$. Let $X_1$ denote the set 
$x_1$-non-offensive vertices in $X$,
and let $X_2 = X \setminus X_1$. We may assume that $X_1,X_2$ are both 
non-empty, for otherwise the result follows from Lemma~\ref{lem:one-almost}.
It follows that $W$ is 1-almost proper, but not proper,  in $G\setminus X_2$. 
Let $W'$, $x^*$ be as in Lemma~\ref{lem:one-almost}. So $W'$ is a proper wheel in $G \setminus X_2$. If $W'$ is 1-almost proper in $G$, then the result of the Lemma follows from Lemma~\ref{lem:one-almost}. So we may assume that $W'$ is not 1-almost proper in $G$. Since every vertex of
$V(G) \setminus X_2$ is proper for $W'$, we deduce that some vertex of
$x \in X_2$ is not proper and not $x_2$-non-offensive for $W'$.

By the definition of $X_1$, and since $W$ is 2-almost proper in $G$, it follows that $N(x) \cap V(C)$ is contained in the sectors $S_3, S_4$ of $W$ containing $x_2$. Since $x_1$ and $x_2$ are not consecutive, $S_3, S_4 \not\in \sset{S_1, S_2}$, and so by Lemma~\ref{lem:one-almost}, $V(S_3) \cup V(S_4) \subseteq V(W')$. Consequently,  $S_3$ and $S_4$ are sectors of $W'$. Since $G$ is triangle-free and every vertex in $X$ is adjacent to $v$, it follows that $x$ is not adjacent to $x^*$. Therefore, $N(x) \cap V(C') \subseteq V(S_3) \cup V(S_4)$, 
$x$ has both a neighbor in $S_3$ and a neighbor in $S_4$, and $S_3, S_4$ are 
the sectors of $W'$ containing $x_2$.  Let $s_3$ denote the neighbor of $x$ in $S_3$ furthest from $x_2$, and let $s_4$ denote the neighbors of $x$ in $S_4$ 
furthest from $x_2$. We may assume that among all vertices of $X_2$ that are
not $x_2$-non-offensive for $W'$, $x$ is chosen so that the path of  $C$ from $s_3$ to 
$s_4$ containing $x_2$ is maximal.

Since $x$ is not $x_2$-non-offensive for $W'$, 
there exists a vertex $u \in N(x) \setminus V(W')$ with a neighbor in $V(C') \setminus (V(S_3) \cup V(S_4))$. Since $x$ is $x_2$-non-offensive for $W$, it follows that $u$ has a neighbor in $V(C') \setminus V(C) = \sset{x^*}$, and so $u$ is adjacent to $x$ and $x^*$. 

Since $x$ and $x^*$ are non-offensive for $W$, it follows that $N(u) \cap V(C) \subseteq (V(S_1) \cup V(S_2)) \cap (V(S_3) \cup V(S_4))$.
Since $G$ is triangle-free, $u$ is non-adjacent to $v$, and therefore 
$u \not \in X$. Consequently, $u$ is proper for $W$,  and all the neighbors of 
$u$ in $C$ belong to one sector of $W$. It follows that $u$ has at most one neighbor in $V(C)$. Suppose that  $u$ has exactly one neighbor in $V(C)$. Then $u$ has three neighbors in the cycle arising from $C'$ by replacing $s_3 - S_3 - x_2 - S_4 - s_4$ by $s_3 - x - s_4$, contrary to Lemma~\ref{nolink}. It follows that $u$ has no neighbors in $V(C)$. 

Let $P_1'$ denote the path of $C'$ from $s_3$ to $x^*$ not containing 
$x_2$, and let $P_1$ be $x-s_3-P_1'-x^*$. Let $P_2'$ denote the path
of $C'$ from $s_4$ to $x^*$ not containing $x_2$, and let $P_2$ be 
$x-s_4-P_1'-x^*$. Let $D = G|(V(P_1) \cup \sset{u})$. Since $x_1$ and $x_2$ are not consecutive, each of $P_1^*, P_2^*$ contains at least one neighbor of $v$, and so $W'' = (D, v)$ is a wheel with fewer spokes than $W$. Let $S_3'$ denote the sector of $W''$ containing $x$ but not containing $u$. If $W''$ is proper in $G$, then the result follows. Therefore, we may assume that there is a vertex $y\in V(G) \setminus V(W'')$ that is not proper for $W''$. 

Since every vertex in $V(W') \setminus V(W'')$ and every vertex in 
$V(W) \setminus V(W'')$ has at most one neighbor in $V(W'')$, it follows that $y \not\in V(W) \cup V(W')$. Suppose that $y\not\in N(u)$. If $y \in X_2$, then $N(y) \cap V(D) \subseteq (V(S_3) \cup \sset{x}) \cap V(D)$, and so $y$ is proper for $W''$, since $y$ is adjacent to $v$, a contradiction. Thus $y \not \in X_2$, and so $y$ is proper for $W'$. 
If $y \not\in N(x)$, then $N(y) \cap V(D) \subseteq N(y) \cap V(C')$, and again $y$ is proper for $W''$, a contradiction. Thus  $y\in N(x)$, but since $x$ is non-offensive for $W$, $N(y) \cap V(C) \subseteq V(S_3) \cup V(S_4)$, and so $N(y) \cap V(D) \subseteq V(S_3')$. Since $y$ is not proper for $W''$, $y$ is non-adjacent to $v$ and has at least three neighbors in $S_3'$. But $y$ is proper for $W'$, and so $y$ has at most two neighbors in $S_3$; thus $y$ has exactly three neighbors in $S_3'$ and hence in $D$ contrary to Lemma~\ref{nolink}. This contradiction implies that $y \in N(u)$. 

Since $y$ is not proper for $W''$, it follows that $y$ has a neighbor in $P_1^*$, and since $G$ is triangle-free, it follows that $y$ is non-adjacent to $x, x^*$. We claim that $y$ has no neighbor in $P_2$. Suppose that it does. If $y \in X_2$, then, since $y$ is adjacent to $u$ and has a neighbor in $P_1^*$, we deduce that $y$ is not 
$x_2$-non-offensive for $W'$, and 
the claim follows from the maximality of the path of $C$ from $s_3$ to $s_4$ 
containing $x_2$. 
Thus we may assume that $y \not \in X_2$. Consequently,  $y$ is proper for 
$W'$, a contradiction. This proves the claim. 

Let $z_1$ be the neighbor of $y$ in $V(P_1)$ closest to $x$ along $P_1$, and let $z_2$ be the neighbor of $y$ in $V(P_1)$ closest to $x^*$ along $P_1$. Let 
$D'$ be the hole $x^*-P_2-x-u-x^*$. If $z_1 \neq z_2$, we can link $y$ to 
$D'$ via the paths $y-z_1-P_1-x$, $y-z_2-P_1-x^*$ and $y-u$, and if
$z_1=z_2$, then we can link $z_1$ to $D'$ via the paths $z_1-P_1-x$, 
$z_1-P_1-x^*$ and and $z_1-y-u$, in both cases contrary to Lemma~\ref{nolink}.
This proves Lemma~\ref{lem:two-almost}.
\end{proof}

Throughout the remainder of this section $G$ is an $\isktk$-free graph, and $W=(C,x)$ is a proper wheel in $G$ with minimum number of spokes (subject to having center $x$). 

\begin{lemma}
\label{unique}
Let $P=p_1- \ldots -p_k$ be a path such that $p_1,p_k$ have neighbors in $V(C)$,
$V(P) \subseteq V(G) \setminus V(W)$, and there are no edges between $P^*$ and $V(C)$.  Assume that no sector of $W$ contains
$(N(p_1) \cup N(p_k)) \cap V(C)$.  
For $i \in \{1,k\}$, if $x$ is 
non-adjacent to $p_i$, then  $p_i$ has a unique 
neighbor in $C$.
\end{lemma}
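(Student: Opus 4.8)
The plan is to argue by contradiction: suppose $i=1$ (say), $x$ is non-adjacent to $p_1$, and $p_1$ has at least two neighbors in $V(C)$. We want to produce a linkage to some hole of $G$, contradicting Lemma~\ref{nolink}. The first thing to establish is that $W$ is proper, hence $p_1$ (being outside $W$) has all its neighbors in $V(C)$ inside a single sector of $W$; since $p_1$ is not adjacent to $x$ this sector is uniquely determined and $p_1$ has at most two neighbors there (the ``at least three forces adjacency to $x$'' part of properness). So $p_1$ has exactly two neighbors in $V(C)$, call them $s,t$, lying in one sector $S$ with $s,t \notin \{x\}$ and $s,t$ not equal to the endpoints of $S$ that are spokes — more precisely $s,t\in S^*$ or at worst the spoke-endpoints of $S$. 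Similarly, either $p_k$ is adjacent to $x$, or $p_k$ has one or two neighbors in a single sector $S'$. Because no sector contains $(N(p_1)\cup N(p_k))\cap V(C)$, we must have $S \neq S'$ (if $p_k$ has neighbors in $C$ at all) or $p_k$ is adjacent to $x$; in either case the ``far'' structure around $p_k$ reaches outside $S$.

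The key step is then the linkage. Consider the hole $x\d S\d x$ obtained from the rim by going around through the sector $S$ — wait, that is not a hole; instead take a hole $C$ itself, or the hole $x\d S_a\d \cdots$ formed by a single sector together with $x$. Concretely: let $S$ have spoke-endpoints $a,b$ (so $a\d S\d b$ with $a,b\in N(x)$ and $x\d a\d S\d b\d x$ a hole $C_S$). Since $p_1$ has two neighbors $s,t$ in $S$ with $a,t,s,b$ in order along $C$, and $p_1\notin C_S$, the vertex $p_1$ has (at least) the two neighbors $s,t$ in $C_S$; I want to link $p_1$ to $C_S$ using $p_1\d s$, $p_1\d t$, and a third path $p_1\d p_2\d\cdots\d p_k$ followed out of $P$ to a neighbor of $p_k$ and then around $C$ to reach $x$ or $S_a/S_b$ without hitting $s,t$ or $S$ in between. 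This uses that $P^*$ is anticomplete to $V(C)$, that the only edges between the $P_i$'s are at $p_1$ (automatic since $P$ is a path), and that the neighbor of $p_k$ we land on lies outside $S$, so the third path is internally disjoint from $C_S\setminus\{\text{its endpoint}\}$. If $p_k$ is adjacent to $x$, the third path ends $\cdots\d p_k\d x$; if $p_k$ has a neighbor $z$ in a sector $S'\neq S$, the third path is $\cdots\d p_k\d z\d S'\d (\text{spoke})$, reaching a spoke of $W$ on $C_S$ other than $a,b$ only if such exists — if $S'$ is one of the two sectors adjacent to $S$ we land exactly at $a$ or $b$, which is fine. One must check the ``no extra chords'' condition of a linkage: chords could only occur inside $V(C)$, which is allowed, or at $p_1$, which is allowed; and $p_1$ has no neighbor on $C_S$ outside the three paths because $N(p_1)\cap V(C)=\{s,t\}\subseteq S$ and $p_1\nsim x$.

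The main obstacle I expect is the bookkeeping of degenerate positions: $s$ or $t$ could coincide with a spoke-endpoint $a$ or $b$ of $S$; the neighbor of $p_k$ could sit in a sector adjacent to $S$ so that the ``third path'' shares the vertex $a$ or $b$ with $C_S$ (which is exactly the shared endpoint, so still legal); or $p_1$ might have only one neighbor while $p_k$ carries the two-neighbor load, which is handled by symmetry. There is also the subtlety that when $p_1$ has a neighbor $c\in C$ the linkage definition requires $c\in P_i$ for some $i$, which forces us to route both $s$ and $t$ into the paths $P_1,P_2$ — that is why we use precisely the paths $p_1\d s$ and $p_1\d t$ as two of the three, rather than, say, going around $C$. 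If instead $p_1$ had three or more neighbors in $C$ we would already be done by properness forcing $p_1\sim x$, contradiction; so the case $|N(p_1)\cap V(C)|=2$ is the only one to handle, and the linkage above, carefully cased on the location of $p_k$'s neighbors, yields the contradiction with Lemma~\ref{nolink} and completes the proof.
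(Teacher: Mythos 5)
Your overall strategy -- link $p_1$ to a hole via the two one-edge paths $p_1\d s$, $p_1\d t$ and a third path built from $P$, then invoke Lemma~\ref{nolink} -- is exactly the paper's, but your choice of target hole breaks the argument. You link to $x\d S\d x$, a hole that contains $x$. The hypotheses of the lemma forbid edges between $P^*$ and $V(C)$ but say nothing about edges between $P^*$ and $x$, and the lemma is in fact invoked inside the proof of Theorem~\ref{paths} (in the claim labelled \eqref{edges}) precisely in a situation where $x$ has a neighbour $p_i$ with $1<i<k$. In that case your third path $p_1\d P\d p_k\d\cdots\d x$ (or any route into $x\d S\d x$) is not even an induced path, and any shortcut still leaves an unexplained edge from the interior of one of your linkage paths to the hole. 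The paper avoids this entirely by linking $p_1$ to the rim $C$ itself (or to $C$ with $S_2$ rerouted through $p_k$ when $p_k$ has several rim-neighbours), a hole that does not contain $x$.

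Two further points in your write-up do not go through as stated. First, when $N(p_k)\cap V(C)$ lies in a sector $S'$ not consecutive with $S$, your third path ``$\cdots\d p_k\d z\d S'\d(\text{spoke})$'' terminates at a spoke that is \emph{not} on $x\d S\d x$ (the only spokes on that hole are $a$ and $b$), so it never reaches the target hole except by continuing through $x$, which runs into the same problem as above. Second, when $s$ or $t$ coincides with a spoke-endpoint $a$ of $S$ that is also an end of $S'$, your third path's landing point can coincide with the endpoint of $p_1\d s$, and two linkage paths sharing an endpoint other than $p_1$ is not ``still legal'' under the definition of linked. Both issues evaporate with the paper's choice of hole: there the three endpoints are $s,t\in V(S_1)$ and a vertex of $V(S_2)\setminus V(S_1)$ (or $p_k$ itself), and the hypothesis that no single sector contains $(N(p_1)\cup N(p_k))\cap V(C)$ is exactly what guarantees the third endpoint is distinct from $s$ and $t$.
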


\begin{proof}
Let  $S_1,S_2$ be distinct sectors of $W$ 
such that $N(p_1) \cap V(C) \subseteq V(S_1)$,
and $N(p_k) \cap V(C) \subseteq V(S_2)$.
We may assume that $p_1$ is non-adjacent to $x$, and so $p_1$ has at most
two neighbors in $C$.
Since $p_1$ cannot be linked to the hole $C$ (or the hole obtained from 
$C$
by rerouting $S_2$ through $p_k$) via two one-edge paths and $P$, it follows that
$p_1$ has a unique neighbor in $C$.
\end{proof}

\begin{theorem}
\label{paths}
Let $P=p_1-\ldots -p_k$ be a path with 
$V(P) \subseteq V(G) \setminus V(W)$ such that $x$ has at most one
neighbor in $P$. 
\begin{enumerate}
\item If $P$ contains no neighbor of $x$, then there is a sector $S$ of
$W$ such that every edge from $P$ to $C$ has an end in $V(S)$.
\item If $P$ contains exactly one neighbor of $x$, then there are two
sectors $S_1,S_2$ of $W$ such that $V(S_1) \cap V(S_2) \neq \emptyset$, and
 every edge from $P$ to $C$ has an end in $V(S_1) \cup V(S_2)$ (where possibly $S_1=S_2$).
\end{enumerate}
\end{theorem}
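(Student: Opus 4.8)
The plan is to prove part 1 by induction on $|V(P)|$ and then deduce part 2 from it. For part 1, suppose $P=p_1-\ldots-p_k$ is a counterexample with $|V(P)|$ minimum, so no $p_i$ is adjacent to $x$ and there is no single sector meeting every edge from $P$ to $C$. First I would reduce to the case where both $p_1$ and $p_k$ have a neighbor in $V(C)$: if $p_1$ (say) has no such neighbor, then the edges from $P$ to $C$ are exactly those from the strictly shorter path $p_2-\ldots-p_k$, which still contains no neighbor of $x$, contradicting minimality. The case $k=1$ is then immediate, since $W$ is proper and $p_1\notin V(W)$, so $N(p_1)\cap V(C)$ lies in one sector.

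Now for the inductive step. By minimality there are sectors $S^{-},S^{+}$ of $W$ such that every edge from $p_1-\ldots-p_{k-1}$ to $C$ has an end in $V(S^{-})$ and every edge from $p_2-\ldots-p_k$ to $C$ has an end in $V(S^{+})$. Using that $W$ is proper (so each $N(p_i)\cap V(C)$ lies in a sector, and $|N(p_i)\cap V(C)|\le 2$ since $p_i$ is non-adjacent to $x$), I would deduce that $N(p_i)\cap V(C)\subseteq V(S^{-})\cap V(S^{+})$ for every interior vertex $p_i$, that $N(p_1)\cap V(C)\subseteq V(S^{-})$ and $N(p_k)\cap V(C)\subseteq V(S^{+})$, and — because $P$ is a counterexample — that $S^{-}\neq S^{+}$, that $p_1$ has a neighbor $a\in V(S^{-})\setminus V(S^{+})$ and $p_k$ a neighbor $b\in V(S^{+})\setminus V(S^{-})$, and that the only possible $C$-neighbor of an interior vertex of $P$ is the common spoke of $S^{-}$ and $S^{+}$ when these are consecutive. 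A short argument using Lemma~\ref{nolink} (with the minimality of $W$) handles that last case, leaving the situation where $p_2,\ldots,p_{k-1}$ have no neighbor in $C$; then Lemma~\ref{unique} forces $N(p_1)\cap V(C)=\{a\}$ and $N(p_k)\cap V(C)=\{b\}$. The cycle $C'$ obtained from $C$ by replacing one arc between $a$ and $b$ by $a-p_1-\ldots-p_k-b$ is then induced (the $C$-neighbors of $P$ are exactly $a$ and $b$); since $a$ and $b$ lie in no common sector at least one arc of $C$ between them carries a spoke of $W$ in its interior, and choosing that arc one checks that $x$ is linked to $C'$ via three paths built from spokes of $W$ and single edges, contradicting Lemma~\ref{nolink}.

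For part 2, let $p_j$ be the unique neighbor of $x$ on $P$. Then $p_1-\ldots-p_{j-1}$ and $p_{j+1}-\ldots-p_k$ contain no neighbor of $x$, so part 1 gives sectors $S_L$ and $S_R$ controlling their edges to $C$, while properness (and triangle-freeness, since $p_j\sim x$) gives a sector $S_j$ with $N(p_j)\cap V(C)\subseteq S_j^{*}$; hence $N(P)\cap V(C)\subseteq V(S_L)\cup V(S_j)\cup V(S_R)$. It remains to collapse these three sectors to two that share a vertex. I would argue that $S_L$ and $S_j$, and $S_R$ and $S_j$, can each be taken consecutive or equal, and that $S_L,S_R$ cannot be genuinely "spread" around $S_j$: otherwise, reasoning as in part 1 but now exploiting that the edge $p_j-x$ provides a spoke-free shortcut, I build a rerouted hole and either contradict Lemma~\ref{nolink} or produce a proper wheel with center $x$ and fewer spokes than $W$, contradicting the minimality of $W$.

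The step I expect to be the main obstacle is the bookkeeping caused by interior vertices of $P$ with neighbors on $C$, especially neighbors lying on a spoke shared by two consecutive sectors: these block a direct appeal to Lemma~\ref{unique} and threaten the inducedness of the rerouted cycle $C'$. Dealing with them by a careful choice of the first vertex of $P$ whose $C$-neighborhood "deviates" from the sector obtained inductively, together with repeated small applications of Lemma~\ref{nolink}, and verifying that $x$ is genuinely \emph{linked} to $C'$ rather than merely incident with three of its vertices, is where the real work lies.
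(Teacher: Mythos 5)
Your setup for part 1 (minimal counterexample, sectors $S^{-},S^{+}$ from minimality, interior neighbours confined to $V(S^{-})\cap V(S^{+})$, Lemma~\ref{unique} giving unique attachments $a,b$, and the rerouted cycle $C'$) matches the opening of the paper's argument, but the final contradiction is a genuine gap. You cannot in general link $x$ to $C'$: the retained arc of $C$ between $a$ and $b$ typically contains four or more spokes of $W$, and by the definition of linking, a vertex with four or more neighbours on a hole cannot be linked to it (each neighbour would have to be the end of one of only three paths). When $x$ has exactly three neighbours on $C'$ the linking contradiction does work, and the paper uses exactly that inside the proof of its claim (\ref{W_1}); but in the main case all you obtain is that $(C',x)$ is a \emph{wheel with fewer spokes than $W$}. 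That is not a contradiction by itself. The actual content of the theorem is what happens next: since $W$ was chosen as a proper wheel with the minimum number of spokes centred at $x$, the new wheel cannot be proper, so some vertex $v$ witnesses its non-properness, and one must carry out a lengthy case analysis of $v$ (claims (\ref{casesnonbr})--(\ref{edges}) in the paper) to reach a contradiction. None of that appears in your proposal, and it is the bulk of the proof.

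The same issue is compounded in part 2. Collapsing $V(S_L)\cup V(S_j)\cup V(S_R)$ to two intersecting sectors again proceeds by rerouting to a wheel $W_1$ with fewer spokes and exploiting that it cannot be proper; but here the rerouted wheel can fail to be proper only because of ``non-offensive'' vertices, which is why the paper first develops Lemmas~\ref{lem:many-nbrs}, \ref{lem:one-almost} and \ref{lem:two-almost} (on $1$- and $2$-almost proper wheels) and invokes Lemma~\ref{lem:two-almost} to conclude that $W_1$ is not proper even after deleting the set $X$ of non-offensive vertices. Your phrase ``produce a proper wheel with centre $x$ and fewer spokes'' names the right target, but producing a \emph{proper} such wheel (or refuting the offending vertex) is precisely the hard step, and the proposal gives no mechanism for it. You correctly identify the bookkeeping around interior attachments and spokes shared by consecutive sectors as delicate, but the decisive obstacle is earlier and larger: replacing the minimum-spoke argument by a direct appeal to Lemma~\ref{nolink} does not go through.
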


\begin{proof}
Let $P$ be a path violating the assertions of the theorem and assume that $P$ is chosen 
with $k$ minimum. Since $W$ is proper, it follows that $k>1$.
Our first goal is to show that $x$ has a neighbor in $V(P)$.

Suppose that $x$ is anticomplete to $V(P)$. 
Then, by the minimality of $k$,  
there exist two sectors $S_1,S_2$ of $W$ such that every edge
 from $\{p_1, \dots, p_{k-1}\}$ to $V(C)$  has an end in $V(S_1)$,
and every  edge
 from $\{p_2, \dots, p_k\}$ to $V(C)$  has an end in $V(S_2)$.
It follows that $S_1 \neq S_2$.
Then $p_1$ has a neighbor 
in $V(S_1) \setminus V(S_2)$, and $p_k$ has a neighbor in 
$V(S_2) \setminus V(S_1)$,
and every edge from $P^*$ to $V(C)$ has an end in $V(S_1) \cap V(S_2)$.
For $i=1,2$ let $a_i,b_i$  be the ends of $S_i$. 
We may assume  that $a_1,b_1,a_2,b_2$ appear in $C$ in 
this order and that $a_1 \neq b_2$.  
Let $Q_1$ be the path of $C$ from $b_2$ to $a_1$ not using $b_1$, and let 
$Q_2$ be the path of $C$ from $b_1$ to $a_2$ not using $a_1$. We can choose 
$S_1$, $S_2$ with $|V(Q_2)|$ minimum (without changing $P$). 
Let $s$ be the neighbor of $p_1$ in $S_1$ closest to $a_1$,
$t$ the neighbor of $p_1$ in $S_1$ closest to $b_1$, $y$ the neighbor
of $p_k$ in $S_2$ closest to $a_2$ and $z$ the neighbor of $p_k$ in
$S_2$ closest to $b_2$. Then $s \neq b_1$
and $z \neq a_2$. It follows that $V(Q_2) \cap \{s,z\} =\emptyset$. 
Moreover, if $V(S_1) \cap V(S_2) \neq \emptyset$,
then $b_1=a_2$ and $V(Q_2)=\{b_1\}$, and in all cases 
$V(Q_1)$ is anticomplete to $P^*$.

Now $D_1=s-p_1-P-p_k-z-S_2-b_2-Q_1-a_1-S_1-s$ is a hole.

\vspace*{-0.4cm}\begin{equation} \label{W_1}
  \longbox{\emph{$W_1=(D_1,x)$ is a wheel with fewer spokes than $W$.}}
\end{equation}

Since $V(Q_2) \cap  V(D_1)=\emptyset$ and $V(Q_2)$ contains 
a neighbor of $x$, it follows that  $x$ has fewer neighbors in $D_1$ than it
does in $C$. It now suffices to 
show that $x$ has at least three neighbors in $Q_1$. Since $a_1,b_2 \in V(Q_1)$, we may assume that $x$ has no neighbor in $Q_1^*$, and 
$Q_1$ is a sector of $W$. Since not every edge between $V(P)$ and $V(C)$ has an 
end in $V(Q_1)$, it follows that $t \neq a_1$ or $y \neq b_2$. 
By symmetry, we may assume that $t \neq a_1$. Since $x$ cannot be linked
to $W$ by Lemma~\ref{nolink}, it follows that $x$ has at least four neighbors in $V(C)$, and therefore  $V(S_1) \cap V(S_2) = \emptyset$. Consequently, 
$P^*$ is anticomplete to $V(C)$. It follows from Lemma~\ref{unique} that
$s=t$.
Now  we can link $s$ to the hole $a_1-Q_1-b_2-x-a_1$ via the paths
$s-S_1-a_1$, $s-p_1-P-p_k-z-S_2-b_2$ and $s-S_1-b_1-x$, 
contrary to Lemma~\ref{nolink}. 
This proves  that $W_1=(D_1,x)$ is a wheel with fewer spokes than $W$. 
This proves (\ref{W_1}).

\vspace*{0.4cm}

By the choice of $W$, it follows from (\ref{W_1}) that $W_1$ is not proper.
Let $S_0$ be the sector $a_1-S_1-s-p_1-P-p_k-z-S_2-b_2$ of $(D_1,x)$.
Since $W$ is a proper wheel and $W_1$ is not a proper wheel, we have that:

\vspace*{-0.4cm}\begin{equation} \label{casesnonbr}
  \longbox{\emph{There exists $v \in  V(G) \setminus V(W_1)$ such that either
\begin{itemize}
\item $v$ is non-adjacent to $x$, and $v$ has at least three neighbors in $S_0$
and $N(v) \cap V(D_1) \subseteq V(S_0)$, or
\item there is a sector $S_3$ of $W$ with
$V(S_3) \subseteq V(Q_1)$, such that $v$ has a neighbor in 
$V(S_3) \setminus V(S_0)$ and a neighbor in $V(S_0) \setminus V(S_3)$,
and $N(v) \cap V(C) \subseteq V(S_3)$.
\end{itemize}}}
\end{equation}

Let $v$ be as in (\ref{casesnonbr}).
First we show that  $v \not \in V(C)$.  The only vertices of $C$ that may have more than one neighbor in $D_1$ are $b_1$ and $a_2$, and that only happens if 
$b_1=a_2$. 
But $N(b_1) \cap V(D_1) \subseteq V(S_0)$ and $b_1$ is adjacent to $x$, so
$b_1$ does not satisfy the conditions described in the bullets. Thus $v \not \in V(C)$.

\vspace*{-0.4cm}\begin{equation} \label{uniquenonbr}
  \longbox{\emph{$v$ has a unique neighbor in $P$.}}
\end{equation}

If the first case of (\ref{casesnonbr}) holds, then the statement of (\ref{uniquenonbr}) 
follows immediately from the minimality of $k$
(since $v$ is non-adjacent to $x$), and
so we may assume that the second case of (\ref{casesnonbr}) holds. 
Observe that no vertex of $V(Q_1)$ is contained both in a sector with end
$a_1$ and in a sector with end $b_2$, and therefore we may assume that $v$
has a neighbor in a sector that does not have end $b_2$.
If $v$ is non-adjacent to $x$, we get a contradiction to the minimality of $k$.
So we may assume that $v$ is adjacent to $x$, and therefore $v$ has a neighbor 
in $S_3^*$, and $b_2 \not \in V(S_3)$. Let $S_4$ be the sector of $D_1$ 
such that $b_2 \in V(S_4)$ and $V(S_4) \subseteq V(Q_1)$. 
Suppose that $y \neq b_2$ or $V(S_3) \cap V(S_4) = \emptyset$. Let $i \in \{1, \dots, k\}$  be maximum such that $v$ is adjacent to $p_i$. Now the path
$v-p_i-P-p_k$ violates the assertions of the theorem, and  so it follows from the 
minimality of $k$ that $N(v) \cap V(P) \subseteq \{p_1,p_2\}$. Therefore,  
since $G$ is  triangle-free, it follows that $v$ has a unique neighbor in $P$,
and (\ref{uniquenonbr}) holds. So we may assume that 
$y=b_2$ and there exists $a_3 \in V(C)$ such 
that $V(S_4) \cap V(S_3)=\{a_3\}$. 
Let $R$ be the path from
$v$ to $a_3$ with $R^* \subseteq S_3^*$. Now we can link $v$ to $x-S_4-x$
via the paths $v-x$, $v-R-a_3$ and $v-p_i-P-p_k-b_2$, where $i$ is maximum such that $v$ is adjacent to $p_i$, contrary to Lemma~\ref{nolink}. 
This proves (\ref{uniquenonbr}).

\vspace*{0.4cm}

In view of (\ref{uniquenonbr})
let $N(v) \cap V(P)=\{p_j\}$. In the case of the first bullet of (\ref{casesnonbr}),
since $v$ cannot be linked to the hole $x-S_0-x$ by Lemma~\ref{nolink}, it follows that $v$ has at least four neighbors in $S_0$, and therefore at least three neighbors
in $V(S_1) \cup V(S_2)$, contrary to the fact that $W$ is proper.
So the case of the second bullet  of (\ref{casesnonbr}) holds. Since $W$ is 
proper
$N(v) \cap (V(S_0) \setminus V(S_3)) \subseteq V(P)$, and 
$N(v) \cap V(D_1) \subseteq V(S_0) \cup V(S_3)$.

\vspace*{-0.4cm}\begin{equation} \label{edges}
  \longbox{\emph{There are  edges between $P^*$ and $V(C)$.}}
\end{equation}

Suppose not.
By Lemma~\ref{unique},  $s=t$ and $y=z$.
We claim that in this case $b_1 \neq a_2$, for if $b_1 = a_2$, then 
$b_1$ can be linked to the hole $x-a_1-S_1-s-p_1-P-p_k-z-S_2-b_2-x$
via the paths $b_1-x$, $b_1-S_1-s$ and $b_1-S_2-z$, contrary to Lemma~\ref{nolink}.
If $v$ has a unique neighbor $r$ in $C$, then
$p_j$ can be linked to $C$ via the paths $p_j-P-p_1-s$, $p_j-P-p_k-z$ and 
$p_j-v-r$, contrary to Lemma~\ref{nolink}, so $v$ has at least two  neighbors in
$C$. Recall that
$N(v) \cap V(C) \subseteq V(S_3)$. Let $D$ be the hole obtained from $C$ by 
rerouting $S_3$ through $v$.
Then $s,z \in V(D)$, and  $p_j$ can be linked to $D$ via the paths
$p_j-P-p_1-s$, $p_j-P-p_k-z$ and 
$p_j-v$, contrary to Lemma~\ref{nolink}. This proves (\ref{edges}).

\vspace*{0.4cm}

If follows from (\ref{edges}) that
$b_1=a_2$ and $b_1$ has neighbors in $P^*$. 
Now, by considering the path from a neighbor of $b_1$ in $P^*$  to $v$ with interior in $P^*$ if $v$ has a neighbor in $P^*$, and the paths $v-p_1$ or $v-p_k$ 
if $v$ has no neighbor in $P^*$,  
the minimality of $k$ implies that $v$ is  adjacent to $x$ and one of 
$a_1,b_2$ belongs to $S_3$.

By symmetry we may assume $a_1 \in V(S_3)$.
Let $R$ be the path from $v$ to 
$a_1$ with $R^* \subseteq V(S_3)$.  
Now $x$ can be linked to the hole $v-R-a_1-S_1-s-p_1-P-p_j-v$ via the paths 
$x-v$,  $x-a_1$ and $x-b_2-S_2-z-p_k-P-p_j$, contrary to Lemma~\ref{nolink}.

\vspace*{0.4cm}

In summary, we have now proved:

\vspace*{-0.4cm}\begin{equation} \label{xnbr}
  \longbox{\emph{If $P'$ is a path violating the assertion of the theorem and
$|V(P')|=k$, then  
 $x$ has a neighbor in $V(P')$.}}
\end{equation}

By \eqref{xnbr}, $x$ has a neighbor in $V(P)$, say $x$ is adjacent to $p_i$.
Then $p_i$ is the unique neighbor of $x$ in $V(P)$.
By the minimality of $k$,  
there exist two distinct sectors $S_1,S_2$ of $W$ such that 
$p_1$ has a neighbor in $V(S_1) \setminus V(S_2)$, and $p_k$ has a neighbor in
$V(S_2) \setminus V(S_1)$. By~\eqref{xnbr}, if $1<i<j$, then
every edge from $\{p_1, \dots, p_{i-1}\}$ to $V(C)$  has an end in $V(S_1)$,
and every  edge from $\{p_{i+1}, \dots, p_k\}$ to $V(C)$  has an end in 
$V(S_2)$; if $i=1$ then every edge from $V(P) \setminus \{p_1\}$ to $V(C)$
has and end in $V(S_2)$; and if $i=k$ then  every edge from 
$V(P) \setminus \{p_k\}$ to $V(C)$ has and end in $V(S_1)$.

For $j=1,2$, let $a_j,b_j$  be the ends of $S_j$.

\vspace*{-0.4cm}\begin{equation} \label{chords}
  \longbox{\emph{One of the following statements holds:
\begin{itemize}
\item there are no edges between $V(C)$ and $P^*$, or
\item we can choose $S_1,S_2$ such that   $a_1,b_1,a_2,b_2$ appear in $C$ in 
order and  there is a sector $S_3$ with ends $b_1,a_2$, and every edge between
$V(C)$ and $P^*$ is from $b_1$ to $\{p_2, \dots, p_{i-1} \}$
or from $p_i$ to $S_3^*$, or from $a_2 $ to $\{p_{i+1}, \dots, p_{k-1}\}$.
\end{itemize}}}
\end{equation}

Suppose (\ref{chords}) is false. It follows that there are edges between $P^*$ and 
$V(C)$. Since $G$ is triangle-free, $p_i$ is anticomplete to $N(x) \cap V(C)$.
Suppose that 
there is  sector $S_3$ of $W$ and an edge from $S_3^*$ to $P^*$. By the 
minimality of
$k$ we deduce that $S_3 \not \in \{S_1,S_2\}$, $1<i<k$ and $p_i$ has a 
neighbor in $S_3^*$. Again
by the minimality of $k$ it follows that there exist
sectors $S_1'$, $S_2'$ such that $V(S_j') \cap V(S_3) \neq \emptyset$
for $j=1,2$ and every edge from $\{p_1, \dots, p_{i-1}\}$ to $C$ has an end
in $S_1'$, and every edge from $\{p_{i+1}, \dots, p_k\}$ to $C$ has an end in
$S_2'$. Now we can choose $S_1=S_1'$ and $S_2=S_2'$. We may assume that
$a_1,b_1,a_2,b_2$ appear in $C$ in this order, and so $b_1$ and $a_2$ are the 
ends of $S_3$. Since $p_i$ has a neighbor in $S_3^*$, the minimality of $k$
implies that $\{p_2, \dots, p_i\}$ is anticomplete to $V(S_1) \setminus \{b_1\}$, and $\{p_i, \dots, p_{k-1}\}$ is anticomplete to $V(S_2) \setminus \{a_2\}$,
and the second bullet is satisfied. So $P^*$ is anticomplete to 
$V(C) \setminus N(x)$. Since there are edges between $P^*$ and $V(C)$, and since
$p_i$ is anticomplete to $N(x) \cap V(C)$, by symmetry we may assume that
there is an edge between $\{p_2, \dots, p_{i-1}\}$
and $t \in N(x) \cap V(C)$. Then $t \in V(S_1)$. Let $S_3$ be the other 
sector of $W$ incident with $t$. By the minimality of $k$ it follows
that $S_2$ can be chosen so that $V(S_3) \cap V(S_2) \neq \emptyset$, and 
again the case of the second bullet holds. This proves (\ref{chords}).

\vspace*{0.4cm}
If the second bullet of (\ref{chords}) holds, 
let $Q_1$ be the path of $C$ from $b_2$ to $a_1$ not using $b_1$, 
and let $Q_2=S_3$. To define $Q_1$ and $Q_2$, let us now assume that  the case of the first bullet holds.
We may assume that $a_1,b_1,a_2,b_2$  appear in $C$ in this order. Also,
$a_1,b_1,a_2,b_2$ are all distinct, since $P$ violates the 
assertion of the theorem.
Let $Q_1$ be the path of $C$ from $b_2$ to $a_1$ not using $b_1$, and let 
$Q_2$ be the path of $C$ from $b_1$ to $a_2$ not using $a_1$. We may assume 
that $S_1$, $S_2$ are chosen with $|V(Q_2)|$ minimum (without changing $P$).

Since $W$ is proper, it follows that $N(p_1) \cap V(C) \subseteq V(S_1)$ 
and $N(p_k) \cap V(C) \subseteq V(S_2)$. 
Let $s$ be the neighbor of $p_1$ in $S_1$ closest to $a_1$,
$t$ the neighbor of $p_1$ in $S_1$ closest to $b_1$, $y$ the neighbor
of $p_k$ in $S_2$ closest to $a_2$ and $z$ the neighbor of $p_k$ in
$S_2$ closest to $b_2$. Then $s \neq b_1$ and $z \neq a_2$.

Let $D_1$ be the hole $a_1-S_1-s-p_1-P-p_k-z-S_2-b_2-Q_1-a_1$. Then $W_1=(D_1,x)$
is a wheel with fewer spokes than $W$.
We may assume that (subject to the minimality of $k$) $P$ was chosen
so that $V(Q_1)$ is (inclusion-wise) minimal. By Lemma~\ref{nolink}, $x$
has a neighbor in $V(D_1) \setminus \{a_1,b_1,p_i\}$, and so $x$ has a
neighbor in $Q_1^*$.

Let $S_0$ be the sector $a_1-S_1-s-p_1-P-p_i$, and let $T_0$ be the
sector $p_i-P-p_k-z-b_2$ of $(D_1,x)$.

\vspace*{-0.4cm}\begin{equation} \label{S_0T_0}
  \longbox{\emph{No vertex $v \in V(G) \setminus V(W_1)$ has both a neighbor
in $V(S_0) \setminus V(T_0)$ and a neighbor in $V(T_0) \setminus V(S_0)$.}}
\end{equation}

Suppose (\ref{S_0T_0}) is false, and let $v \in V(G)\setminus V(W_1)$ 
be such that $v$ has a neighbor  in $V(S_0) \setminus V(T_0)$ and a neighbor 
in $V(T_0) \setminus V(S_0)$. 

First we claim that $v$ is adjacent to $x$. 
Suppose $v$ has a neighbor in $V(a_1-S_1-s)$. Since $W$ is proper and 
$a_1,s \not \in V(S_2)$ (because $P$ violates the statement of the theorem), 
it follows that $v$ has no 
neighbor in $V(z-S_2-b_2)$.  Consequently $v$ has a neighbor in
$V(T_0) \setminus (V(S_2) \cup V(S_0))$.
Let $j$ be maximum such that $v$ is adjacent to $p_j$, then $j>i$.
Now applying \eqref{xnbr} to the path $v-p_j-P-p_k$ we deduce that
$v$ is adjacent to $x$, as required. Thus we may assume that 
$N(v) \cap (V(S_0) \cup V(T_0)) \subseteq V(P)$. Let $j$ be minimum
and $l$ maximum such that $v$ is adjacent to $p_j$, $p_l$. Then 
$j<i$ and $l>i$.  Applying \eqref{xnbr} to the path $p_1-P-p_j-v-p_l-P-p_k$,
we again deduce that $x$ is adjacent to $v$. This proves the claim.

In view of the claim,  Lemma~\ref{lem:many-nbrs} implies that $v$ has at least 
two neighbors in  $V(T_0) \setminus V(S_0)$ and at least two neighbors in 
$V(S_0) \setminus V(T_0)$. But now, rerouting $P$ through $v$ (as in the previous paragraph), we get a contradiction to the minimality of $k$. This 
proves \eqref{S_0T_0}.

\vspace*{0.4cm}

\vspace*{-0.4cm}\begin{equation} \label{non-offensive}
  \longbox{\emph{Every non-offensive vertex for $W_1$ is 
either $a_1$-non-offensive or $b_2$-non-offensive.}}
\end{equation}

Let $v$ be a non-offensive vertex for $W_1$. Since $W$ is proper, it follows 
that $N(v) \cap V(C)$ is included in a unique sector of $W$.  Consequently,
$v$ is either $a_1$-non-offensive, or $b_2$-non-offensive,
or $p_i$ non-offensive. However, (\ref{S_0T_0}) implies that $v$ is not
$p_i$-non-offensive, and (\ref{non-offensive}) follows.

\vspace*{0.4cm}

Let $X$ be the set of all non-offensive vertices for $W_1$. It follows from
Lemma~\ref{lem:two-almost} that $W_1$ is not proper in $V(G) \setminus X$.

\vspace*{-0.4cm}\begin{equation} \label{cases}
  \longbox{\emph {There exists $v \in V(G) \setminus (V(W_1) \cup X)$ such that 
one of the following holds:
\begin{itemize}
\item $v$ is non-adjacent to $x$, and $v$ has at least three neighbors in $S_0$, and $N(v) \cap V(D_1) \subseteq V(S_0)$.
\item $v$ is non-adjacent to $x$, and $v$ has at least three neighbors in $T_0$, and $N(v) \cap V(D_1) \subseteq V(T_0)$.
\item $v$ has a neighbor in $V(S_0) \setminus V(T_0)$ and a neighbor in 
$V(T_0) \setminus V(S_0)$, and $N(v) \cap V(D_1) \subseteq V(S_0) \cup V(T_0)$.
\item (possibly with the roles of $S_0$ and $T_0$ exchanged) 
there is a sector  $S_4$ of $W$ with $V(S_4) \subseteq V(Q_1)$ 
 such that $v$ has a neighbor in  $V(S_4) \setminus (V(S_0) \cup V(T_0))$, 
$v$ has a neighbor in  $V(S_0) \setminus V(S_4)$,
$v$ does not have a neighbor in $V(T_0) \setminus (V(S_0) \cup V(S_4))$, 
and $N(v) \cap V(C) \subseteq V(S_4)$.
\end{itemize}}}
\end{equation}

We may assume that the first three bullets of (\ref{cases}) do not hold.  
Since $W$ is proper and $W_1$ is not, (possibly switching the roles of $S_0$ 
and $T_0$) there exists $v \in V(G) \setminus V(W_1)$ and a sector  $S_4$ of 
$W$ with  $V(S_4) \subseteq V(Q_1)$, such 
that $v$ has a neighbor in  $V(S_4) \setminus V(S_0)$, $v$ has a neighbor in 
$V(S_0) \setminus V(S_4)$, and $N(v) \cap V(C) \subseteq V(S_4)$. 
But now (\ref{S_0T_0}) implies that the last 
bullet  of (\ref{cases}) holds. This proves (\ref{cases}).

\vspace*{0.4cm}

Let $v \in V(G)$ be as in (\ref{cases}). Next we show that:

\vspace*{-0.4cm}\begin{equation} \label{uniquenbr}
  \longbox{\emph{$v$ has a unique neighbor  in $V(P)$.}}
\end{equation}

Suppose that $v$ has at least two  neighbors in $P$.
In the first two cases of (\ref{cases}) we get a contradiction to the minimality of $k$. The third case is impossible by~(\ref{S_0T_0}). 
Thus we may assume that the
case of the fourth bullet of (\ref{cases})  holds.
We may assume that $N(v) \cap V(P) \subseteq V(S_0)$, and in particular $v$
has a  neighbor in $\{p_1, \dots, p_{i-1}\}$. Suppose first that 
$v$ is non-adjacent to $x$. Since $v$ has a neighbor in 
$V(S_4) \setminus V(S_0)$,  the minimality of $k$ implies that
$t=a_1$ and $a_1 \in V(S_4)$, and also that $b_2 \in V(S_4)$, 
contrary to the fact that $x$ has a neighbor in $Q_1^*$.  
So $v$ is adjacent to $x$, and therefore $v$ has a neighbor in $S_0^*$.

Since $W$ is proper, $N(v) \cap (V(S_0) \setminus  V(S_4)) \subseteq V(P)$.
Let $Q$ be the path from $v$ to $p_1$ with $Q^* \subseteq V(P)$.
Suppose first that $a_1 \not \in V(S_4)$. Let $S_5$ be the sector of $W$ 
with end $a_1$ and such that $V(S_5) \subseteq V(Q_1)$, and let $b_3$ be the 
other end of $S_5$. Since $Q$ is shorter than $P$, it follows from the 
minimality of $k$ that
$V(S_4) \cap V(S_5) = \{b_3\}$ and $t=a_1$. Let $R$ be the path from $v$ to 
$b_3$ with $R^* \subseteq S_4^*$. Then $x$ has exactly three neighbors in the
hole $v-R-b_3-S_5-a_1-p_1-Q-v$, contrary to Lemma~\ref{nolink}. This proves that
$a_1 \in V(S_4)$. 

Let $b_3$ be the other end of $S_4$, let $S_5$ be 
the second sector of $W$ incident with $b_3$, and let $a_3$ be the other 
end of $S_5$. 
Since $v \not \in X$, it follows that $v$ has a neighbor 
$u \in V(G) \setminus V(W_1)$ such that $u$ has a neighbor in 
$V(D_1) \setminus (V(S_4) \cup V(S_0))$. Since $G$ is triangle-free,
$u$ is non-adjacent to $x$.

Suppose first that $u$ has a neighbor in $V(Q_1) \setminus V(S_4)$. Since $G$ is
 triangle-free  and $v$ has at least  two neighbors in $V(P)$, it follows that 
$i \geq 4$, and therefore $k \geq 4$.  Consequently,  the path $u-v$ is 
shorter than $P$,
and so it follows from the minimality of $k$ that 
$N(u) \cap V(C) \subseteq V(S_5)$. Let $R$ be the path from $v$ to $b_3$ with
$R^* \subseteq S_4^*$, and let $D_2$ be the hole $v-R-b_3-x-v$. Let $p$ be the neighbor of $u$ in $V(S_5)$ closest to $b_3$, and let $q$ be the neighbor of
$u$ in $V(S_5)$ closest to $a_3$. If $p \neq q$, we can link $u$ to $D_2$ via the paths $u-p-S_5-b_3$, $u-q-S_5-a_3-x$ and $u-v$, and if $p=q$ we can link
$p$ to $D_2$ via the paths $p-u-v$, $p-S_5-b_3$ and $p-S_5-a_3-x$, in both cases
contrary to Lemma~\ref{nolink}. This proves that $u$ has no neighbor in
$V(Q_1) \setminus V(S_4)$, and therefore $u$ has a neighbor in 
$V(T_0) \setminus V(S_0)$. 

Next we define a new path $Q$.
If $u$ has a neighbor in $V(T_0) \cap V(S_2)$, let $Q$ be the path $u-v$.
If $u$ is anticomplete to $V(T_0) \cap V(S_2)$, let $j$ be maximum such that $u$ is adjacent to $p_j$; then $j>i$; let $Q$ be the path $v-u-p_j-P-p_k$.
Since $i>4$, in both cases $|V(Q)|<k$ and $x$ has a unique neighbor in
$V(Q)$. It follows from the minimality of $k$ that $z=y=b_2=a_3$.  Since 
$P$ violates the theorem, it follows that $p_1$ has a neighbor in 
$V(S_1) \setminus \{a_1\}$.

Let $T$ be the path from $v$ to $a_1$ with 
$T^* \subseteq V(S_4)$. Suppose that $s \neq t$. Let
$D_3$ be the hole $x-a_1-S_1-s-p_1-t-S_1-b_1-x$. Now
$v$ can be linked to $D_3$ via the paths $v-x$, $v-Q-p_1$ 
(short-cutting through a neighbor of $b_1$ if possible) and 
$v-T-a_1$, contrary to Lemma~\ref{nolink}. Thus $s=t$, and therefore 
$s \neq a_1$. Bow we can link $v$ to $x-S_1-x$ via the paths $v-x$, $v-Q-p_1-s$ 
(short-cutting through a neighbor of $b_1$ if possible) and 
$v-T-a_1$, contrary to Lemma~\ref{nolink}. 
This proves~(\ref{uniquenbr}).

\vspace*{0.4cm}

In view of (\ref{uniquenbr}) let $p_j$ be the unique neighbor of $v$ in $V(P)$.

\vspace*{-0.4cm}\begin{equation} \label{case4}
  \longbox{\emph{The fourth case of (\ref{cases}) holds. }}
\end{equation}

Suppose first that the case of the first bullet of (\ref{cases}) happens. 
Then by Lemma~\ref{nolink} 
$v$ has at least four neighbors in the hole $x-S_0-x$, and so, in view of 
(\ref{uniquenbr}),  $x$ has
at least three neighbors in the path $a_1-S_1-s$, contrary to the fact that
$W$ is proper. By symmetry it follows that the cases of first two bullets 
of (\ref{cases}) do  not happen. Suppose that the
case of the third bullet of (\ref{cases}) happens. Since by (\ref{uniquenbr})
$v$ has a unique 
neighbor in 
$V(P)$, it follows that $v$ has a neighbor in $(V(S_0) \cup V(T_0)) \setminus V(P)$. By symmetry we may assume that $v$ has a neighbor in  $z-S_2-b_2$,
and, since $W$ is proper, $v$ is anticomplete to $V(S_0) \setminus V(P)$.
Consequently, $p_j \in V(S_0) \setminus V(T_0)$, and so $j<i$. By the 
minimality of $k$ (applied to the path $p_1-P-p_j-v$), it follows that $j=k-1$,
and therefore $i=k$.  Then  $\{v,p_k\}$ is anticomplete to 
$V(C) \setminus V(S_2)$, since $W$ is proper.
By (\ref{xnbr}) $v$ is adjacent to $x$. But now we get a contradiction to
Lemma~\ref{lem:many-nbrs} applied to $v$ and $W_1$.
This proves (\ref{case4}).

\vspace*{0.4cm}

In the next claim we further restrict the structure of $P$.

\vspace*{-0.4cm}\begin{equation} \label{structure}
  \longbox{\emph{One of the following statements holds:
\begin{itemize}
\item there are  edges between $P^*$ and $V(C)$, or
\item $j=1$ and we can choose $S_4$ so that $a_1 \in V(S_4)$, or
\item $j=k$ and we can choose $S_4$ so that $b_2 \in V(S_4)$.
\end{itemize}}}
\end{equation}

Suppose that (\ref{structure}) is false.
Assume first that $j \not \in \{1,k\}$. 
Then $p_j$  is anticomplete to $V(C)$, since by assumption, there are no edges between $P^*$ and $C$.
If $s=t$, $y=z$ and $v$ has a unique neighbor $r$ in 
$S_4$, then $r \in V(S_4) \setminus (V(S_1) \cup V(S_2))$, and 
$p_j$
can be linked to $C$ via the paths $p_j-P-p_k-z$, $p_j-v-r$ and 
$p_j-P-p_1-s$, contrary to Lemma~\ref{nolink}. If some of $p_1,p_k,v$
have several neighbors in $C$, then similar linkages work for the holes
obtained from $C$ by rerouting $S_1$ through $p_1$, $S_2$ through $p_k$, 
and $S_4$ through $v$, respectively. This proves that $j \in \{1,k\}$, 
and by symmetry may assume that $j=1$.   Then $S_4$ cannot be chosen so that 
$a_1 \not \in V(S_4)$, for otherwise (\ref{structure}) holds.
By the minimality of $k$ and by (\ref{xnbr}), since
$S_4$ cannot be chosen so that $a_1 \in V(S_4)$, 
it follows that $x$ is adjacent to one of $p_1,v$ and $k=2$.
Since $G$ is triangle-free, $x$ has exactly one neighbor in $\{p_1,v\}$.
Let $R$ be the path from $v$ to $a_1$ with $R^* \subseteq V(C) \setminus \{b_1\}$. Let $Q_1'$ be the subpath of $R$ from an end of $S_4$ to $a_1$. 
Then  $V(Q_1') \subseteq V(Q_1)$ and $b_2 \not \in V(Q_1')$,  and so the
path $p_1-v$  contradicts the choice of $P$. This proves (\ref{structure}).

\vspace*{0.4cm}

The goal of the next two claims is to obtain more information about $i$ and $j$.

\vspace*{-0.4cm}\begin{equation} \label{jisi}
  \longbox{\emph{$i=j$.}}
\end{equation}

Suppose not; by symmetry we may assume that $j<i$.
Suppose first that $x$ is non-adjacent to $v$. 
By (\ref{xnbr}) and the minimality of $k$, it follows that the first assertion of
the theorem holds for the path $p_1-P-p_j-v$; therefore
$a_1=t$ and $S_4$ can be chosen so that $a_1 \in V(S_4)$. Since $W$ is proper it follows that $v$ has at most two
neighbors in $S_4$. If $v$ has exactly two neighbors, then, in view of 
(\ref{chords}),  $v$ can be 
linked to $x-S_4-x$ via two one-edge paths and the path $v-p_j-P-p_i-x$,
contrary to Lemma~\ref{nolink}. Therefore $v$ has a unique neighbor $r$ in
$S_4$. Now, again in view of (\ref{chords}),  $p_j$ can be linked to $x-S_4-x$ via the paths $p_j-v-r$, 
$p_j-P-p_1-a_1$ and $p_j-P-p_i-x$, again contrary to Lemma~\ref{nolink}. This 
proves that  $v$ is  adjacent to $x$, and, since $G$ is triangle-free,
$v$ has a neighbor in $S_4^*$.  It follows that the choice of $S_4$ is unique.
Let $R$ be the path from 
$v$ to $a_1$ with $R^* \subseteq V(C) \setminus \{b_1\}$. 
Suppose $a_1 \in V(S_4)$. Then $R^* \subseteq S_4^*$.
In this case, because of (\ref{chords}) and since $b_1 \neq s$,   
$p_j$ can be linked to the hole
$v-R-a_1-x-v$ via the path $p_j-v$, $p_j-P-p_1-s-S_1-a_1$ and 
$p_j-P-p_i-x$, contrary to Lemma~\ref{nolink}. Thus $a_1 \not \in V(S_4)$.
Let $S_5$ be the sector of $W$ with end $a_1$ such that 
$V(S_5) \subseteq V(Q_1)$. If $t=a_1$ and $V(S_4) \cap V(S_5) \neq \emptyset$,
then $x$ has exactly three neighbors in the hole $v-R-a_1-p_1-P-p_j-v$, contrary to Lemma~\ref{nolink}. Therefore the path $p_1-P-p_j-v$ violates 
the assertion of the theorem, and so the minimality of $k$ implies that $j=k-1$
and consequently $i=k$. Then by (\ref{chords}) $a_2$ is 
anticomplete to  $V(P)  \setminus \{p_k\}$.  Since $j\neq k$ and $a_1 \not \in V(S_4)$ (the choice
of $S_4$ is now unique), it follows from (\ref{structure}) that there
are edges between $P^*$ and $V(C)$. Now by (\ref{chords})
there is a sector $S_3$ of $W$ with 
ends $a_2,b_1$,  and $b_1$ has a neighbor in $P^*$.
Then there is  a path 
$T$ from $b_1$ to $p_k$ with $T^* \subseteq P^*$, 
$b_1-S_3-a_2-S_2-y-p_k-T-b_1$ is a hole and $x$ has exactly three neighbors in
it, contrary to Lemma~\ref{nolink} (observe that $y \neq b_2$ because $G$ has
no triangles).   This proves (\ref{jisi}).

\vspace*{0.4cm}

Since $G$ is triangle-free, (\ref{jisi}) implies that 
$x$ is non-adjacent to $v$. 

\vspace*{-0.4cm}\begin{equation} \label{ends}
  \longbox{\emph{$i \leq 2$ and $i \geq k-1$.}}
\end{equation}

Suppose (\ref{ends}) is false. By symmetry we may assume that $k-i>1$. 
Consequently $k>2$.
Suppose   that $S_4$ can be chosen so that $a_1 \in V(S_4)$. If $v$ has a unique
neighbor $r$  in $V(S_4)$, then, since $s \neq b_1$,  
$p_i$ can be linked to $x-S_4-x$ via the
paths $p_i-v-r$, $p_i-x$ and $p_i-P-p_1-s-S_1-a_1$, a contradiction. Thus  $v$
has at least two neighbors in $V(S_4)$. Now, again using the fact that $s \neq b_1$,  $p_i$ can be linked to  the hole 
obtained from $x-S_4-x$ by rerouting $S_4$ through $v$ via the paths 
$p_i-v$, $p_i-x$ and $p_i-P-p_1-s-S_1-a_1$, again contrary to Lemma~\ref{nolink}.
Thus $S_4$ cannot be chosen so that $a_1 \in V(S_4)$. 
Let $S_5$ be the sector of $W$ with end $a_1$ such that $V(S_5) \subseteq V(Q_1)$. Since $i \leq k-2$, the minimality of $k$ applied to the path
$p_1-P-p_i-v$ implies that $t=a_1$ and 
$V(S_4) \cap V(S_5) \neq \emptyset$. In particular,
$i \neq 1$. It follows from (\ref{structure}) that there
are edges between $P^*$ and $V(C)$, and by (\ref{chords}) there
is a sector $S_3$ of $W$ with ends $b_1,a_2$ and every edge from 
$p_i$ to $V(C)$ have an end in $S_3^*$.
Together with the minimality of $k$ (using the path
$p_i-v$), this implies that  $p_i$ is anticomplete to $V(C)$. 
If $v$ has a unique neighbor $r$ in 
$S_4$ (and therefore $r \neq b_2$) and $p_k$ has a unique neighbor in $S_2$, then $p_i$ can be linked to
$C$ via the paths $p_i-v-r$, $p_i-P-p_1-a_1$ (short-cutting through neighbors of
$b_1$ if possible), and $p_i-P-p_k-z$ (short-cutting through neighbors of $a_2$
if possible). If $v$ has at least two neighbors in $V(S_4)$ or
or $p_k$ has at least two neighbors in $V(S_2)$, then the same linkage
works rerouting $S_4$ through $v$, and $S_2$ through $p_k$, respectively.
This proves (\ref{ends}).

\vspace*{0.4cm}

It follows from (\ref{jisi}) and (\ref{ends}) that either
\begin{itemize}
\item $k=3$ and $i=j=2$, or 
\item $k=2$.
\end{itemize}

If $k=3$ and $i=j=2$, then by (\ref{structure}) there are edges between
$P^*$ and $V(C)$,  and so by (\ref{chords}) there is 
a sector $S_3$ with ends $a_2,b_1$, so that
$p_2$ has neighbors in $S_3^*$; now the path $p_2-v$ contradicts the 
minimality of $k$. Thus $k=2$, and we may assume that $i=1$, by symmetry. 
Since $G$ is triangle-free, it follows that $p_1$ is non-adjacent to $a_1,b_1$.
Since now $P^*=\emptyset$ is  anticomplete to $V(C)$, it follows 
from (\ref{structure}) that we 
can choose $S_4$ with $a_1 \in V(S_4)$. Since $v$ is non-adjacent to $x$
and $W$ is proper, it follows that $v$ has
at most two neighbors in $S_4$. If $v$ has exactly two neighbors in $S_4$, 
then $v$ can be linked to the hole $x-S_4-x$ via two one-edge paths, and the
path $v-p_1-x$, contrary to Lemma~\ref{nolink}. Thus $v$ has a unique neighbor
$r$ in $V(S_4)$. Now $p_1$ can be linked to $x-S_4-x$ via the paths
$p_1-v-r$, $p_1-x$ and $p_1-s-S_1-a_1$, again contrary to Lemma~\ref{nolink}.
This completes the proof of Theorem~\ref{paths}.  
\end{proof}

We can now prove Theorem~\ref{wheelmain0} which we restate:

\begin{theorem}
\label{wheelmain}
Let $G$ be an $\isktk$-free graph, and let $x$ be the center of a proper wheel in $G$. If $W=(C,x)$ is a proper wheel with a minimum number of spokes subject to having center $x$, then

\begin{enumerate}
\item every component of $V(G) \setminus N(x)$ contains the interior of at most 
one sector of $W$, and
\item for every $u \in N(x)$,
the component $D$ of $V(G) \setminus (N(x) \setminus \{u\})$ such that 
$u \in V(D)$ contains the interiors of at most two sectors of $W$, 
and if $S_1,S_2$ are sectors with $S_i^* \subseteq V(D)$ for $i=1,2$, then 
$V(S_1) \cap V(S_2) \neq \emptyset$.
\end{enumerate}
\end{theorem}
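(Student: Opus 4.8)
The plan is to deduce both statements directly from Theorem~\ref{paths} by the right choice of path. For statement (1), suppose some component $D$ of $G \setminus N(x)$ contains the interiors of two distinct sectors $S, S'$ of $W$. Pick a vertex $u \in S^*$ and a vertex $u' \in S'^*$, and let $P$ be a shortest path from $u$ to $u'$ with $V(P) \subseteq V(D)$; since $D$ is a component of $G \setminus N(x)$, we have $V(P) \cap N(x) = \emptyset$, so $x$ has no neighbor in $V(P)$. The endpoints $p_1 = u$ and $p_k = u'$ have neighbors in $V(C)$ (namely in $S^*$ and $S'^*$ respectively), and $P$ is an induced path disjoint from $V(W)$. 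By Theorem~\ref{paths}(1), there is a single sector $T$ of $W$ such that every edge from $V(P)$ to $V(C)$ has an end in $V(T)$. But $u = p_1$ has a neighbor in $S^*$ and $u' = p_k$ has a neighbor in $S'^*$, and since $S \neq S'$ the interiors $S^*$ and $S'^*$ are disjoint and no single sector meets both; this is a contradiction. Hence no component of $G \setminus N(x)$ contains the interiors of two sectors, which is statement (1).

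For statement (2), fix $u \in N(x)$ and let $D$ be the component of $G \setminus (N(x) \setminus \{u\})$ containing $u$. Suppose $D$ contains the interiors of three distinct sectors, or two sectors $S_1, S_2$ with $V(S_1) \cap V(S_2) = \emptyset$. Note that $V(D) \cap N(x) \subseteq \{u\}$: any vertex of $N(x) \setminus \{u\}$ was deleted, so $u$ is the only neighbor of $x$ that can lie in $D$. In either bad case I can find two vertices $a, b$ lying in the interiors of two sectors $S, S'$ of $W$ with $S^* \cap (V(S') \cup \{$the sector between them$\}) $ arranged so that no single pair of sectors sharing a vertex captures all edges — concretely, pick $a \in S^*$, $b \in S'^*$ with $S, S'$ chosen so that $S$ and $S'$ do not share an end (possible precisely when $D$ hits three sectors or two non-consecutive ones), and take a shortest $a$--$b$ path $P$ inside $D$. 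Then $x$ has at most one neighbor on $P$ (at most the vertex $u$, if $u \in V(P)$). Apply Theorem~\ref{paths}: if $x$ has no neighbor on $P$ we use part (1) to get a single sector capturing all edges from $P$ to $C$, impossible since $a,b$ lie in interiors of distinct sectors; if $x$ has exactly one neighbor on $P$ we use part (2) to get two sectors $S_1, S_2$ with $V(S_1) \cap V(S_2) \neq \emptyset$ capturing all such edges, which again contradicts the choice of $S, S'$ as two sectors with no common end. Either way we reach a contradiction, proving statement (2).

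The main obstacle I anticipate is the bookkeeping in statement (2): I need to be careful that when $D$ contains interiors of two sectors $S_1, S_2$, it is genuinely true that either $V(S_1) \cap V(S_2) \neq \emptyset$ or one can exhibit a path violating Theorem~\ref{paths}. The subtlety is that $P$ must be chosen so that its endpoints land in $S_1^*$ and $S_2^*$ (not merely in $V(C)$), and so that $x$ has at most one neighbor on $P$; one must check that shortest paths inside $D$ achieve this, using that $u$ is the only possible vertex of $P$ adjacent to $x$ and that $G$ is triangle-free (so $u$ cannot be adjacent to two consecutive vertices of an induced path of length $\geq 2$). A second point requiring care is that Theorem~\ref{paths} is stated for paths $P$ with $x$ having \emph{at most} one neighbor on $P$, so the case analysis ($0$ or $1$ neighbors of $x$ on $P$) matches its two parts exactly; one should also handle the degenerate cases where $P$ is a single vertex or an edge, where the conclusion is immediate from properness of $W$.
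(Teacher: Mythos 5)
Your proposal follows exactly the paper's route: both parts are deduced directly from Theorem~\ref{paths} by exhibiting, inside the relevant component, a path whose edges to $C$ cannot be captured by one sector (resp.\ by two intersecting sectors). The one genuine flaw in the write-up is that the path you feed to Theorem~\ref{paths} does not satisfy its hypotheses: you take $P$ to run from $u\in S^*$ to $u'\in S'^*$, so its ends lie in $V(C)\subseteq V(W)$, while Theorem~\ref{paths} requires $V(P)\subseteq V(G)\setminus V(W)$ (your own sentence claiming $P$ is ``disjoint from $V(W)$'' contradicts your choice of endpoints). The fix is to apply the theorem to the \emph{interior} of a shortest connecting path, chosen shortest over all pairs of (disjoint) sectors whose interiors lie in the component: this interior is nonempty because distinct sector interiors are nonadjacent in the hole $C$, it avoids $N(x)\setminus\{u\}$ and, by the minimality of the choice, avoids every sector interior and hence $V(C)$ (in part (2) one should also note that if $u$ is a spoke, the two sectors at $u$ intersect, so $u$ need never appear in the interior of a minimal violating path); its ends then have neighbors in $S^*$ and $S'^*$, and the contradiction you describe goes through. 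With that repair your argument is the paper's argument, spelled out in slightly more detail than the paper itself gives.
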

 
\begin{proof}
To prove the first statement, we observe that if some  component of 
$V(G) \setminus N(x)$ contains the interiors of two sectors of $W$, then this 
component contains a path violating the first assertions of 
Theorem~\ref{paths}.
 
For the second statement, suppose $D$ contains the interiors of two disjoint 
sectors $S_1,S_2$ of
$W$. Since $|D \cap N(x)|=1$, we get a path in $D$ violating the second
assertion of Theorem~\ref{paths}.
This proves~Theorem~\ref{wheelmain}.  
\end{proof}

\section{Proper Wheel Centers}

In the proof of our main theorem, we perform manipulations on $\isktk$-free graphs; in this section, we show that this preserves being $\isktk$-free, and that no vertex becomes the center of a proper wheel. 

\begin{lemma} \label{lem:starcutset} Let $G$ be an $\isktk$-free graph, $s \in V(G)$, $K$ a component of $G \setminus N[s]$, and $N$ the set of vertices in $N(s)$ with a neighbor in $K$. Let $H = G|(V(K) \cup N \cup \sset{s})$. Then $s$ is not the center of a proper wheel in $H$, and for $v \in V(H) \setminus \sset{s}$, if $v$ is the center of a proper wheel in $H$, then $v$ is the center of a proper wheel in $G$.
\end{lemma}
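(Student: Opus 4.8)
The first thing to note is that $H$, being an induced subgraph of $G$, is $\isktk$-free, so all the results of Section~2 may be applied inside $H$. Before treating the two assertions I would record three structural facts. Since no vertex of $K$ is adjacent to $s$, the set of neighbours of $s$ in $H$ is exactly $N$, and $H\setminus N[s]=G|V(K)=K$ is connected; hence $V(K)$ is one component of $H\setminus N(s)$ (the only other one being the isolated vertex $s$). Writing $A=V(G)\setminus V(H)$, every $a\in A$ is anticomplete to $V(K)$ in $G$: a neighbour of $a$ in $V(K)$ would either lie in $N(s)$ and hence, having a neighbour in $K$, lie in $N\subseteq V(H)$, or lie in $G\setminus N[s]$ and hence in the component $K\subseteq V(H)$. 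Consequently $N(a)\cap V(H)\subseteq N\cup\{s\}$ for every $a\in A$. Finally, $N\subseteq N(s)$ and $G$ triangle-free give that $N$ is an independent set.

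\emph{$s$ is not the center of a proper wheel in $H$.} Suppose $W=(C,s)$ is a proper wheel in $H$ with center $s$ and a minimum number of spokes. Since $G$ is triangle-free, no two spokes of $W$ are consecutive on $C$, so every sector of $W$ has nonempty interior; as $W$ has at least three sectors, there are at least three sector interiors, each contained in $V(C)\setminus N[s]\subseteq V(K)$. Thus the single component $V(K)$ of $H\setminus N(s)$ contains the interiors of at least three sectors of $W$, contradicting statement~1 of Theorem~\ref{wheelmain} applied in $H$.

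\emph{The second assertion.} Let $v\in V(H)\setminus\{s\}$ be the center of a proper wheel $W=(C,v)$ in $H$. Since $W$ is proper in $H$, any vertex that prevents $W$ from being a proper wheel in $G$ lies in $A$, and for $a\in A$ we have $N(a)\cap V(C)\subseteq(N\cup\{s\})\cap V(C)$. If $v\in V(K)$ then $s$ is nonadjacent to $v$, so $s$ is not a spoke; if $s\notin V(C)$ then properness of $W$ in $H$ (applied to $s$) gives $|N\cap V(C)|\le2$, and if $s\in V(C)$ then $N(a)\cap V(C)\subseteq\{s^-,s,s^+\}$ with $s^-,s^+$ contained in the unique sector whose interior contains $s$; using triangle-freeness, in either case each $a\in A$ has at most two neighbours on $C$, all in one sector, so $W$ is already a proper wheel in $G$. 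Now assume $v\in N$, so $v$ is adjacent to $s$. If $s\in V(C)$ then $s$ is a spoke whose rim-neighbours $s^-,s^+$ lie in distinct sectors $S^-,S^+$; a violator $a\in A$ satisfies $a\not\sim s$ by triangle-freeness, hence $N(a)\cap V(C)\subseteq\{s^-,s^+\}$, and $a$ can only be a violator if it is adjacent to both $s^-$ and $s^+$. If such an $a$ were nonadjacent to $v$, then $v$ could be linked to the hole $s^--s-s^+-a-s^-$ via $v-s$, $v-S^--s^-$ (through the far endpoint of $S^-$) and $v-S^+-s^+$, contradicting Lemma~\ref{nolink}; so $a\sim v$, and then $a$ is $s$-non-offensive for $W$ (the last bullet of that definition holds because every neighbour of $a$ outside $V(W)$ has all its rim-neighbours among $\{s^-,s,s^+\}$, or lies in $N$ with its single rim-neighbourhood meeting a sector at $s$). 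Hence $W$ is $1$-almost proper in $G$ with distinguished spoke $s$, and Lemma~\ref{lem:one-almost} yields a proper wheel in $G$ with center $v$.

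It remains to handle $v\in N$ with $s\notin V(C)$, which I expect to be the crux. Here every vertex of $N\cap V(C)$ is a non-spoke (it lies in $N$ and $v\in N$ is nonadjacent to it), so by properness of $W$ in $H$ the set $N\cap V(C)$ is contained in $S_0^\ast$ for a single sector $S_0$, whose endpoints $c_1,c_2$ are spokes lying in $V(K)$. A violator $a\in A$ then has $N(a)\cap V(C)\subseteq S_0^\ast$, so it cannot violate the ``single sector'' condition; hence it violates only the ``at least three neighbours'' condition, giving $a\not\sim v$, and since $a\not\sim s$ (triangle-freeness, as $a$ has a neighbour in $N$) it follows that $a$ has at least three neighbours in $S_0^\ast\cap N$. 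Exactly three is impossible by Lemma~\ref{nolink} applied to the hole $v-S_0-v$, so $a$ has at least four, and then $\{s,a\}$ is complete to an independent set of four vertices of $V(C)$, an induced $K_{2,4}$. The plan is to enlarge this to a forbidden subgraph: each of these four vertices also has a neighbour in the connected set $V(K)$, while $s$ and $a$ are anticomplete to $V(K)$, so from $K$ one extracts a further branch vertex together with three internally disjoint paths to three of them, producing an induced subdivision of $K_{3,3}$ — hence, by Theorem~\ref{lem:sub}, a $K_{3,3}$ in $G$, a contradiction. The delicate point, and the main obstacle, is choosing this branch vertex and these paths (minimally) so that the resulting subdivision is genuinely induced, i.e.\ has no chords from the three rim-vertices or from $s,a$ into the $K$-paths. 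Granting this, no violator exists in this case either, so $W$ is a proper wheel in $G$. Combining the cases, $v$ is in every case the center of a proper wheel in $G$, which proves the lemma.
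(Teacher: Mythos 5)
Your first assertion and the two cases $v\in V(K)$ and ($v\in N$, $s\in V(C)$) are handled correctly; in the latter you even treat the subcase of a violator adjacent to $v$ via the $1$-almost-proper machinery, which is careful and sound (the key point, that every neighbour $u\notin V(W)$ of such a violator has $N(u)\cap V(C)\subseteq V(S^-)\cup V(S^+)$ because $u$ is either anticomplete to $V(K)$ or lies in $N$ and hence is adjacent to the spoke $s$, does go through). The problem is the case you yourself identify as the crux, $v\in N$ and $s\notin V(C)$: there you reduce to a violator $a$ with at least four neighbours in $N\cap S_0^*$, note that $\{s,a\}$ together with these neighbours induces a $K_{2,4}$, and then only sketch a plan to grow this into an induced subdivision of $K_{3,3}$ by extracting a branch vertex and three internally disjoint paths from $K$. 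You explicitly write ``Granting this, no violator exists,'' and the granted step --- choosing the branch vertex and the three paths so that the union is \emph{induced}, with no chords from $n_1,n_2,n_3$ (which all have further neighbours in $K$) into the paths and no chords between the paths --- is exactly where the work lies. As written, the proof is incomplete at this point; Lemma~\ref{lem:treecnn} alone does not deliver inducedness of the final configuration, only the tree structure of a minimal connected subgraph.

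The gap is fillable, and in fact you are working harder than necessary. Once $a$ has three neighbours $n_1,n_2,n_3\in N\cap V(C)$, which are independent (they lie in $N(s)$) and all adjacent to $s$, with $s$ and $a$ nonadjacent and both anticomplete to $V(K)$, take a \emph{shortest} path $P$ from $n_1$ to $n_2$ with interior in $V(K)$. If $n_3$ is anticomplete to $P$, then $G|(V(P)\cup\{a,s,n_1,n_2,n_3\})$ is an induced $\isk$ with branch vertices $a,s,n_1,n_2$ (the $a$--$s$ path being $a\hbox{-}n_3\hbox{-}s$); otherwise minimality of $P$ forces $P^*$ to be a single vertex $p$ adjacent to $n_3$, and $\{a,s,p,n_1,n_2,n_3\}$ induces a $K_{3,3}$. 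Either way you contradict $G$ being $\isktk$-free, with no need for a fourth neighbour, a $K_{2,4}$, or an induced $K_{3,3}$-subdivision through a tree in $K$. This three-neighbour trick is precisely how the paper's proof disposes of \emph{every} violator with at least three rim-neighbours, uniformly and without your case split on the positions of $v$ and $s$; the remaining violators have exactly two rim-neighbours, which forces $s$ to be a spoke and is then killed by a single linkage.
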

\begin{proof}
Since $H \setminus N[s]$ is connected, it follows that $s$ is not the center of a proper wheel in $H$ by Theorem~\ref{wheelmain}. Let $v \in V(H) \setminus \sset{s}$ be the center of a proper wheel $W = (C, v)$ in $H$. For all $w \in V(G) \setminus V(H)$, $N(w) \cap V(C) \subseteq N[s]$, and since $G$ is triangle-free, it follows that every vertex $w \in V(G) \setminus V(H)$ either has at most one neighbor in $V(C)$, or $N(w) \cap V(C) \subseteq N(s)$. 

Suppose that $W$ is not proper in $G$. Then there exists a vertex $w$ such that either $w$ has more than two neighbors in a sector of $W$, but $w$ is not adjacent to $v$, or $w$ has neighbors in at least two sectors of $W$. It follows that $w$ has more than one neighbor in $V(C)$, and thus in $N(s)$. Suppose that $w$ has three distinct neighbors $a,b,c$ in $V(C) \cap N(s)$. Let $P$ be a shortest path connecting two of $a,b,c$, say $a$ and $b$, with interior in $K$; then $s$ is anticomplete to $P^*$. If $c$ is anticomplete to $P$, then $G|(V(P) \cup \sset{w, s, a, b, c})$ is an $\isk$. Otherwise, by the minimality of $|V(P)|$, $P^*$ consists of a single vertex $x$, and $\sset{w,s,x,a,b,c}$ induces a $K_{3,3}$ subgraph in $G$, a contradiction. So $w$ has exactly two neighbors $a$ and $b$ in $V(C)$, and thus $a$ and $b$ are in different sectors of $W$. Since $a, b \in N(s)$ and $W$ is proper in $H$, it follows that $s \in V(C)$ and $s$ is a spoke of $W$; let $S, S'$ be the two sectors of $W$ containing $s$. But then $v$ can be linked to the cycle $s-a-w-b-s$ via $v-s$ and the two paths with interiors in $S \setminus s$ and $S' \setminus s$. This is a contradiction by Lemma~\ref{nolink} and it follows that $W$ is proper in $G$. This concludes the proof.  
\end{proof}

We use the following well-known lemma, which we prove for completeness. 
\begin{lemma} \label{lem:treecnn} Let $G$ be a connected graph, $a, b, c \in V(G)$ with $d(a) = d(b) = d(c) = 1$, and let $H$ be a connected induced subgraph of $G$ containing $a, b, c$ with $V(H)$ minimal subject to inclusion. Then either $H$ is a subdivision of $K_{1,3}$ with $a, b, c$ as the vertices of degree one, or $H$ contains a triangle. 
\end{lemma}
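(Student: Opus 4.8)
The plan is to prove Lemma~\ref{lem:treecnn} by analyzing the structure forced by the minimality of $V(H)$. First I would observe that since $a,b,c$ all have degree one in $G$, they have degree one in $H$ as well (if say $a$ had two neighbors in $H$, then $H\setminus a$ would still be connected, containing $a$'s unique $G$-neighbor; but $a$ has only one neighbor in $G$, so this can't happen unless removing $a$ disconnects things — more carefully, $H\setminus a$ contains $b,c$ and the unique neighbor $a'$ of $a$, and since $a$ is a leaf of $H$, $H\setminus a$ is connected, contradicting minimality). So $a,b,c$ are leaves of $H$. If $H$ contains a triangle we are done, so assume $H$ is triangle-free.

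Next I would argue that $H$ has no cycle at all. Suppose $H$ contains a cycle $Z$; since $H$ is triangle-free, $Z$ has length at least $4$, i.e., $Z$ is a hole. Pick any edge $e = uw$ of $Z$. The claim is that $H \setminus e$ (deleting the edge, not the vertices) is still connected and still contains $a,b,c$ — which would contradict minimality of $V(H)$ only if we were deleting a vertex, so instead I should delete a vertex of the cycle. The right move: since $a,b,c$ are leaves, none of them lies on $Z$. Let $z$ be a vertex of $Z$; then $H \setminus z$ is connected iff $z$ is not a cut-vertex. The key point is that a minimal connected subgraph containing a fixed set of vertices (a ``Steiner tree''-like object) must actually be a tree: if $H$ had a cycle, I can find a vertex $z$ on the cycle that is not a cut-vertex of $H$ and is none of $a,b,c$. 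Indeed, consider a block-cut tree decomposition of $H$; a block that is a cycle and is a leaf of the block-cut tree contains a vertex that is in no other block, hence not a cut-vertex, and since $a,b,c$ are leaves they lie in trivial ($K_2$) blocks, so such a cycle-block cannot contain them. Removing that non-cut vertex $z$ keeps $H$ connected and keeps $a,b,c$, contradicting minimality. (Alternatively: among all connected subgraphs spanning $V(H)$, a spanning tree of $H$ is a strictly smaller... no — same vertex set; so I really do need the non-cut-vertex argument.) So $H$ is a tree.

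Then I would finish by showing this tree is a subdivision of $K_{1,3}$ with $a,b,c$ as the degree-one vertices. Since $H$ is a tree with $a,b,c$ among its leaves, minimality forces $a,b,c$ to be \emph{all} the leaves: if $H$ had a fourth leaf $d \notin \{a,b,c\}$, then $H\setminus d$ is still a connected subgraph containing $a,b,c$ (as $d$ is a leaf), contradicting minimality of $V(H)$. A tree with exactly three leaves has exactly one vertex of degree three and all other internal vertices of degree two, i.e., it is a subdivision of $K_{1,3}$; the three leaves are $a,b,c$. This gives the first alternative of the lemma, completing the proof.

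The main obstacle is the step showing $H$ contains no cycle: one must be careful that the vertex removed to contradict minimality is (i) not a cut-vertex of $H$ and (ii) not one of $a,b,c$. The block-cut-tree argument handles both cleanly — a leaf block that is a cycle (length $\ge 4$ by triangle-freeness) supplies a non-cut-vertex distinct from $a,b,c$ — so once that is set up correctly the rest is routine. Everything else (leaves are leaves, extra leaves contradict minimality, trees with three leaves are subdivided claws) is elementary.
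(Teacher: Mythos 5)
Your overall strategy (show $H$ is a tree unless it has a triangle, then count leaves) is sound, and the first and last steps are fine, but the middle step --- ruling out cycles --- has a genuine gap. You claim that if $H$ contains a cycle then the block-cut tree of $H$ has a \emph{leaf} block that is a cycle, and you extract the removable vertex $z$ from that leaf block. There is no reason such a leaf block should exist: the $2$-connected block containing the cycle can sit in the interior of the block-cut tree. For instance, if $H$ were a $C_4$ with pendant vertices $a,b,c$ attached to three of its four vertices, then every leaf block is a $K_2$ and the cycle block is internal. (This particular $H$ is of course not minimal --- the fourth vertex of the $C_4$ is removable --- but it shows your argument does not locate the removable vertex, so the case of an internal cycle block is simply not handled.)

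The block-decomposition route can be repaired with one more idea, namely counting the leaves of the block-cut tree. By minimality every vertex of $V(H)\setminus\{a,b,c\}$ is a cut vertex of $H$; each leaf block contains a non-cut vertex, which must therefore be one of $a,b,c$, and since $a,b,c$ have degree one each leaf block is a $K_2$. Hence the block-cut tree has at most three leaves. A $2$-connected block $B$ all of whose vertices are cut vertices has degree $|V(B)|$ in the block-cut tree, which forces at least $|V(B)|$ leaves; so $|V(B)|\le 3$ and $B$ is a triangle, which is exactly the lemma's second outcome. The paper avoids block decompositions altogether: it takes a shortest $a$-$b$ path $P$ and a shortest path $Q$ from $c$ to a vertex $d$ having a neighbor in $V(P)$, notes that minimality forces $V(H)=V(P)\cup V(Q)$, and reads off the three outcomes (subdivided $K_{1,3}$, triangle, or a removable vertex $w$ of $P$ straddled by two neighbors of $d$) from how $d$ attaches to $P$. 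That argument is shorter and entirely elementary; your version is a legitimate alternative only once the leaf-counting step is added.
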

\begin{proof}
Let $G, a, b, c, H$ be as in the statement of the theorem. Let $P$ be a shortest $a-b$-path in $H$, and let $Q$ be a shortest path from $c$ to a vertex $d$ with a neighbor in $V(P)$. By the minimality of $V(H)$, it follows that $V(H) = V(P) \cup V(Q)$. Moreover, $P$ and $Q$ are induced paths and no vertex of $Q \setminus d$ has a neighbor in $V(P)$. If $d$ has exactly one neighbor in $V(P)$, then the result follows. If $d$ has two consecutive neighbors in $V(P)$, then $H$ contains a triangle. Otherwise, let $w \in V(P)$ such that $d$ has a neighbor both on the subpath of $P$ from $w$ to $a$ and on the subpath of $P$ from $w$ to $b$. It follows that $w \not\in \sset{a,b,c}$, and that $H \setminus w$ is connected and contains $a, b, c$. This contradicts the minimality of $V(H)$, and the result follows. 
\end{proof}

\begin{lemma} \label{lem:contract} Let $G$ be an $\isktk$-free graph, $s$ the center of a proper wheel in $G$, $K$ a component of $G \setminus N[s]$, and $N$ the set of vertices in $N(s)$ with a neighbor in $K$. Let $G'$ arise from $G$ by contracting $V(K)$ to a new vertex $z$. If $G|(V(K) \cup N \cup \sset{s})$ is series-parallel, then $G'$ is $\isktk$-free.  
\end{lemma}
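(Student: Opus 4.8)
The plan is to show that if $G'$ contains a forbidden induced subgraph or an induced subdivision, we can pull it back into $G$. Observe first that in $G'$ the only new vertex is $z$, and $N_{G'}(z) = N$. So if $F'$ is an induced subgraph of $G'$ not containing $z$, then $F'$ is already an induced subgraph of $G$, hence harmless; thus any forbidden configuration in $G'$ must use $z$. If $z$ has at most two neighbors in the vertex set of the configuration, then $z$ plays the role of an ordinary (low-degree) vertex, and since $G|(V(K)\cup N\cup\{s\})$ is series-parallel, $K$ contains a path realizing any required connection through $z$; more carefully, since $F'$ is one of $\textnormal{ISK}_4$, a triangle, or $K_{3,3}$, and in each of these $z$ would have degree at most $2$ in the configuration only in the $\textnormal{ISK}_4$ case, we can replace $z$ by an induced path in $K$ joining its (at most two) neighbors $a,b \in N$, chosen shortest, and obtain an induced subdivision of $K_4$ in $G$ — a contradiction. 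The triangle case needs $z$ to have two neighbors $a,b \in N$ with $ab \in E(G)$, but then $\{s,a,b\}$ together with a shortest $a$--$b$ path through $K$ gives either an $\textnormal{ISK}_4$ or (if the path is a single vertex) a $K_{3,3}$ in $G$, again a contradiction; in fact $s$ is complete to $N$, so $s,a,b$ already form a triangle in $G$ unless $ab \notin E(G)$, so no triangle through $z$ arises at all.

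The substantive case is $F' = K_{3,3}$ with $z$ a vertex of degree $3$, say with neighbors $a_1,a_2,a_3 \in N$ on one side and $b_1,b_2,b_3$ the other side, where $b_1,b_2,b_3$ are pairwise non-adjacent vertices of $G'$, each adjacent to all of $a_1,a_2,a_3$, and the $b_i$'s are non-adjacent to $z$, hence lie in $V(G) \setminus V(K)$ and are anticomplete to $V(K)$. Here the idea is: each $a_i$ has a neighbor in $K$, so pick for each $i$ a shortest path $P_i$ from $a_i$ into $K$, but actually we want a connected subgraph of $K$ joining $a_1,a_2,a_3$. Apply Lemma~\ref{lem:treecnn}: add to $K$ three pendant vertices simulating the $a_i$'s (i.e. work inside $G|(V(K)\cup\{a_1,a_2,a_3\})$ and take a minimal connected induced subgraph $T$ containing $a_1,a_2,a_3$ — since $G$ is triangle-free, $T$ is a subdivision of $K_{1,3}$ with the $a_i$ as leaves and interior inside $K$). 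Now $T$ together with its branch vertex $t$, the three legs, plus $b_1,b_2,b_3$ — each $b_i$ adjacent to all three leaves $a_1,a_2,a_3$ and anticomplete to the interior of $T$ — forms an induced subdivision of $K_4$ in $G$ (the four branch vertices being $b_1,b_2,b_3$ and $t$, with the $a_j$'s subdividing the edges from $t$, or even $b_1b_2b_3$ forming a triangle-free structure): more precisely, $G|(V(T) \cup \{b_1,b_2,b_3\})$ is an $\textnormal{ISK}_4$ unless some $b_i$ has a second neighbor in $V(T)$, but $b_i$ is anticomplete to $V(K) \supseteq T \setminus \{a_1,a_2,a_3\}$ and $b_i$ is adjacent to exactly $a_1,a_2,a_3$ among $V(T)$ by the $K_{3,3}$ structure — wait, we must also ensure $b_i$ is non-adjacent to $a_j$ for $j$ outside... there is no such $j$. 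So $G|(V(T)\cup\{b_1,b_2,b_3\})$ is indeed an induced subdivision of $K_4$, a contradiction.

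Finally, I must also handle the induced subdivisions of $K_4$ in $G'$ where $z$ is a branch vertex of degree $3$; this is the analogue of the $K_{3,3}$ case but now only one external structure is attached. If $z$ is a branch vertex with three branch-neighbours realised via $a_1,a_2,a_3 \in N$, take the minimal connected induced $T \subseteq G|(V(K)\cup\{a_1,a_2,a_3\})$ as above (an induced $K_{1,3}$-subdivision with branch vertex $t \in V(K)$), and replace $z$ in the $\textnormal{ISK}_4$ of $G'$ by $T$ rooted at $t$; since the interior of $T$ lies in $K$ and is anticomplete to $V(G') \setminus (V(K) \cup N) \supseteq$ all other branch vertices and subdivision-path vertices of the $\textnormal{ISK}_4$ (those outside $N[s]$... actually a subtlety: other vertices of the $\textnormal{ISK}_4$ might lie in $N$, and could be adjacent to $V(K)$), we need that the chosen $T$ can be taken to avoid creating chords — here we use that $G|(V(K)\cup N \cup \{s\})$ is series-parallel, so it is $\textnormal{ISK}_4$-free, wheel-free and $K_{3,3}$-free by Theorem~\ref{thm:duffin}, which prevents $K$ together with $N$-vertices from forming the problematic configuration. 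The main obstacle I expect is exactly this bookkeeping: confirming that the minimal connected subgraph chosen inside $K$ does not have unexpected adjacencies to the $N$-vertices that also appear in the forbidden subgraph of $G'$, and organising the case analysis (which of the six $\textnormal{ISK}_4$ branch/subdivision roles $z$ can play, and whether its three ``connections'' go to three distinct vertices of $N$ or fewer) so that every case reduces, via Lemma~\ref{lem:treecnn} and the series-parallel hypothesis, to an $\textnormal{ISK}_4$, triangle, or $K_{3,3}$ already present in $G$.
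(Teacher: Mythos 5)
Your overall strategy is the paper's strategy (pull the forbidden configuration back into $G$ by replacing $z$ with a path, or with a $K_{1,3}$-subdivision inside $K$ obtained from Lemma~\ref{lem:treecnn}), but there is a genuine gap at the crucial step. Lemma~\ref{lem:treecnn} requires $a,b,c$ to have \emph{degree one} in the ambient graph; you apply it directly to a minimal connected induced subgraph of $G|(V(K)\cup\sset{a_1,a_2,a_3})$, where each $a_i$ may have many neighbours in $K$. Without the degree-one hypothesis, the minimal connected induced subgraph need not be a $K_{1,3}$-subdivision with the $a_i$ as leaves: it can, for instance, be a path from $a_2$ to $a_3$ passing \emph{through} $a_1$, or a tree in which some $a_i$ has two neighbours, and then the substituted graph is not a subdivision of $H$. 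This precondition is exactly where the paper does its real work: it first takes a shortest $a$--$b$ path $P$ with interior in $K$, uses the fact that $G|(V(K)\cup N\cup\sset{s})$ is wheel-free (Theorem~\ref{thm:duffin}) to show $c$ has at most one neighbour on $P$, then takes a shortest path $Q$ from $c$ to $V(P)\setminus\sset{a,b}$, and only after verifying that each of $a,b,c$ has a unique neighbour in $V(P)\cup V(Q)$ does it invoke Lemma~\ref{lem:treecnn} inside $G|(V(P)\cup V(Q))$. You cannot skip this; the series-parallel hypothesis is used precisely here, not as a vague ``prevents problematic configurations.''

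Two smaller points. First, your $K_{3,3}$ case is miscounted: $K_{3,3}$ has six vertices, so if $z$ lies in it there are only two further vertices $b_1,b_2$ on $z$'s side, and the structure you get after substituting $T$ for $z$ is an induced \emph{subdivision of $K_{3,3}$}, not an $\isk$ with branch vertices $b_1,b_2,b_3,t$ (the $a_i$ would have to serve simultaneously as internal vertices of several branch paths, which is not a $K_4$-subdivision). The correct finish is to invoke Theorem~\ref{lem:sub}, which converts an induced subdivision of $K_{3,3}$ (or of an $\isk$) into a $K_{3,3}$ subgraph or an $\isk$ in $G$, contradicting the hypothesis. Second, the ``main obstacle'' you defer at the end --- that other vertices of the forbidden subgraph might lie in $N$ and have neighbours in $K$ --- is actually a non-issue: every vertex of $N$ is by definition adjacent to $z$ in $G'$, so any vertex of the induced subgraph $H$ lying in $N$ is one of the neighbours $a,b,c$ of $z$ in $H$; all other vertices of $H$ are anticomplete to $V(K)$. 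So that bookkeeping is easy, but the degree-one precondition for Lemma~\ref{lem:treecnn} is not, and as written your argument does not establish it.
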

\begin{proof}
$G'$ does not contain a triangle, because $N_{G'}(z) \subseteq N_G(s)$ is stable, and hence $z$ is not in a triangle in $G'$. Suppose that $H$ is an induced subgraph of $G'$ which is a $K_{3,3}$ or a subdivision of $K_4$. Then $z \in V(H)$. If $z$ has degree two in $H$ (and so $H$ is an $\isk$), let $a, b$ denote its neighbors; we can replace $a-z-b$ by an $a-b$-path $P$ with interior in $K$ and obtain a subdivision of $H$, which is an $\isk$, as an induced subgraph of $G$, a contradiction. Thus $z$ has degree three in $H$; let $a, b, c$ denote the neighbors of $z$ in $H$. Let $P$ be a shortest $a-b$-path with interior in $K$. Then $c$ has at most one neighbor on $P$, for otherwise $G|(V(P) \cup \sset{a,b,c,s})$ is a wheel, contrary to the fact that $G|(V(K) \cup N \cup \sset{s})$ is series-parallel and does not contain a wheel by Theorem~\ref{thm:duffin}. Let $Q$ be a shortest path from $c$ to $V(P) \setminus \sset{a,b}$ with interior in $K$; then each of $a, b, c$ has a unique neighbor in $V(Q) \cup V(P)$ by symmetry. Let $H'$ be a minimal connected induced subgraph of $G|(V(P) \cup V(Q))$ containing $a, b, c$. Since $G|(V(K) \cup N \cup \sset{s})$ is series-parallel, it follows that $H'$ is a subdivision of $K_{1,3}$ with $a, b, c$ as the vertices of degree one by Lemma~\ref{lem:treecnn}. Therefore, $G|(V(H\setminus z) \cup V(H'))$ is a subdivision of $H$, and by Theorem~\ref{lem:sub}, it contains an $\isk$ or a $K_{3,3}$ subgraph in $G$. This is a contradiction, and the result is proved. 
\end{proof}

\begin{lemma} \label{lem:non-centers} Let $G$ be an $\isktk$-free graph, $s$ the center of a proper wheel in $G$, $K$ a component of $G \setminus N[s]$, and $N$ the set of vertices in $N(s)$ with a neighbor in $K$, and let $H = G|(V(K) \cup N \cup \sset{s})$ be series-parallel. Let $G'$ arise from $G$ by contracting $V(K)$ to a new vertex $z$. Then $z$ is not the center of a proper wheel in $G'$, and for $v \in V(G') \setminus \sset{s,z}$, if $v$ is the center of a proper wheel in $G'$, then $v$ is the center of a proper wheel in $G$.
\end{lemma}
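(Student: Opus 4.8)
The plan is to follow the pattern of Lemmas~\ref{lem:starcutset} and~\ref{lem:contract} while keeping track of the contraction. First record the basic facts we will use throughout: $G'$ is $\isktk$-free by Lemma~\ref{lem:contract}; the neighbours of $z$ in $G'$ are exactly the vertices of $V(G)\setminus V(K)$ with a neighbour in $K$, which, since $K$ is a component of $G\setminus N[s]$ and $s$ has no neighbour in $K$, is exactly the set $N$; and $s$ is adjacent in $G'$ to every vertex of $N$ but $sz\notin E(G')$.

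To prove that $z$ is not the centre of a proper wheel in $G'$, suppose $(C,z)$ is such a wheel. Then $z$ has at least three neighbours on $C$, all in $N$, so $s$ has at least three neighbours on $C$ in $G'$. Since $C$ is an induced cycle, this forces $s\notin V(C)$; but then $s\in V(G')\setminus(V(C)\cup\{z\})$ has at least three neighbours on $C$, so the definition of a proper wheel gives $sz\in E(G')$, a contradiction.

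For the second statement, let $(C,v)$ be a proper wheel in $G'$ with $v\neq s,z$, so $v\in V(G)\setminus V(K)$, and produce a proper wheel in $G$ with centre $v$. Applying the wheel property of $(C,v)$ to the vertex $z$ yields a single sector $S_0$ of $(C,v)$ with $N\cap V(C)\subseteq V(S_0)$, and $v\in N$ whenever $|N\cap V(C)|\geq 3$. If $z\notin V(C)$, I claim $(C,v)$ is already proper in $G$: vertices of $V(G)\setminus V(K)$ have the same neighbours on $C$ in $G$ as in $G'$, so they inherit the property, and for $w\in V(K)$ we have $N_G(w)\cap V(C)\subseteq N\cap V(C)\subseteq V(S_0)$ (every neighbour of $w$ outside $K$ lies in $N(s)$), which gives the first bullet; so it remains only to show that if $w\in V(K)$ has at least three neighbours on $C$ then $w\sim v$. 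If instead $z\in V(C)$, let $z_1,z_2\in N$ be its two neighbours on $C$ and replace $z$ by an induced path $P$ from $z_1$ to $z_2$ in $G|(V(K)\cup\{z_1,z_2\})$, taking care (when $z\sim v$, so that $v\in N$) that $v$ retains at least three neighbours on the new cycle. Then $D=(C\setminus z)\cup P$ is a hole of $G$: its only possible chords run from $P^*\subseteq K$ to $V(C)\setminus\{z\}$, and such an endpoint would lie in $N$ and be adjacent to $z$ on $C$, hence equal $z_1$ or $z_2$, which is excluded since $P$ is induced in $G|(V(K)\cup\{z_1,z_2\})$. One then checks that $(D,v)$ is a proper wheel in $G$ by the same analysis as in the first case, using that interior vertices of $P$ see $C$ only inside the sector of $(C,v)$ containing $z$, and that all other relevant neighbourhoods are unchanged from $G'$.

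The step I expect to be the main obstacle is the boxed claim common to both cases: a vertex $w\in V(K)$ with three neighbours $a,b,c$ on $C$ must be adjacent to $v$. The approach is that of Lemmas~\ref{lem:starcutset} and~\ref{lem:contract}: since $a,b,c\in N\subseteq N(s)$, the vertex $s$ is adjacent to all of $a,b,c$ while $s\not\sim w$ and $G$ is triangle-free, so $\{a,b,c,w,s\}$ induces a $K_{2,3}$; combining this with a suitable arc of $C$ joining two of $a,b,c$ and avoiding the third (shortening the arc if $w$ has further neighbours on it, or routing a short path through $K$), one extracts an induced subdivision of $K_4$ or a $K_{3,3}$ in $G$, contradicting $\isktk$-freeness. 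The delicate point is carrying out this extraction uniformly: one must control the additional neighbours of $w$ and of $s$ on the chosen arc, much as in the shortest-path and minimal-subgraph arguments of Lemmas~\ref{lem:treecnn} and~\ref{lem:contract}.
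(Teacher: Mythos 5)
The first part of your argument (that $z$ is not a proper-wheel centre in $G'$) is correct and matches the paper. The gap is in the second part, precisely at the ``boxed claim'' you flag as the main obstacle: it is not merely delicate, it is unprovable as stated, because it can be false. Suppose $z \notin V(C)$, $z$ is adjacent to $v$, and $w \in V(K)$ has $k \geq 3$ neighbours on $C$, necessarily all in $N \cap V(S)$ where $S$ is the sector of $(C,v)$ containing $N_{G'}(z) \cap V(C)$, with $w$ non-adjacent to $v$. If $k=3$ then $G|(V(S) \cup \sset{v,w})$ is an $\isk$ and you do get a contradiction; but if $k \geq 4$ you get only a wheel with rim $v-S-v$ and centre $w$, and wheels are not forbidden in $\isktk$-free graphs --- the whole paper is about members of this class that contain wheels. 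Adjoining $s$ gives the induced $K_{2,3}$ you describe, but no choice of arcs of $C$ or paths through $K$ converts that configuration into an induced $K_{3,3}$ or $\isk$ in general (every vertex of $N$ on those arcs is adjacent to $s$, which kills the inducedness you would need). So such a $w$ can genuinely exist, and then $(C,v)$ is simply not a proper wheel in $G$: the conclusion you are trying to reach by contradiction is the thing that fails.

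The paper handles exactly this situation not by contradiction but by repairing the wheel. It chooses $W=(C,v)$ with a minimum number of spokes (a choice your proposal never makes, but which is essential), and whenever some $x_i \in V(K)$ non-adjacent to $v$ has at least three neighbours in the current sector $S_i$, it reroutes $S_i$ through $x_i$ to get a strictly shorter sector, iterating until no such vertex remains. The substantive work is then claim~\eqref{eq:induction}: every vertex proper for $W_i$ remains proper for $W_{i+1}$, proved via Theorem~\ref{wheelmain0} applied to the spoke-minimal wheel $W$ together with linking arguments. This mechanism is entirely absent from your proposal and cannot be replaced by the $K_{2,3}$-extension you sketch. (Your treatment of the case $z \in V(C)$ via a shortest path through $K$ is essentially the paper's, and the case $z$ non-adjacent to $v$ is also fine, since there every vertex of $K$ has at most two neighbours on $C$.)
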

\begin{proof} 
Since $N_{G'}(z) \subseteq N_{G'}(s)$, it follows that $z$ is not the center of a proper wheel in $G'$, for otherwise $s$ would have a neighbor in every sector of such a wheel. This proves the first statement of the lemma. 

Throughout the proof, let $v \in V(G') \setminus \sset{s,z}$ be the center of a proper wheel in $G'$, and let $W = (C, v)$ be such a wheel with a minimum number of spokes. Since $G'$ is $\isktk$-free by Lemma~\ref{lem:contract}, it follows that $W$ satisfies the hypotheses of  Theorem~\ref{wheelmain0}. Our goal is to show that $v$ is the center of a proper wheel in $G$. 

\vspace*{-0.4cm}\begin{equation} \label{eq:zinC}
  \longbox{\emph{If $z \in V(C)$, then $v$ is the center of a proper wheel in $G$.}}
\end{equation} 

Suppose that $z \in V(C)$. Let $a, b$ denote the neighbors of $z$ in $V(C)$. Let $P$ be a shortest $a-b$-path with interior in $K$. Then every vertex in $V(K)$ has at most two neighbors in $V(P)$. Let $W' = (C',v)$ be the wheel in $G$ that arises from $W$ by replacing the subpath $a-z-b$ of $C$ by $a-P-b$ to obtain $C'$. 

It remains to show that $W'$ is a proper wheel in $G$. Suppose that some vertex $x \in V(G) \setminus V(K)$ has two or more neighbors in $P^*$. Then $x \in N \subseteq V(H)$, and $H|(\sset{a,b,x,s} \cup V(P))$ is a wheel in $H$ with center $x$, a contradiction since $H$ is series-parallel Theorem~\ref{thm:duffin}. 

Since $v \not\in V(K)$, it follows from the claim of the previous paragraph that $v$ has at most one neighbor in $P^*$, and no neighbor unless $v$ is adjacent to $z$, and therefore there are at most two sectors of $W'$ intersecting $P^*$. We claim that if for a vertex $x$ we have $|N_G(x) \cap V(C')| \geq 3$, then $x\not\in V(K)$ and $|N_{G'}(x) \cap V(C)| \geq 3$. Suppose that $x$ is a vertex violating this claim. If $x \in K$, then $N_G(x) \cap V(C') \subseteq V(P)$, and so $|N_G(x) \cap V(C')| \leq 2$ by the minimality of $|V(P)|$, a contradiction; it follows that $x \not\in K$. Therefore, $|N_G(x) \cap P^*| \leq 1$, and thus $|N_G(x) \cap V(C')|- |N_{G'}(x) \cap V(C)| \leq 1$. But $|N_G(x) \cap V(C')| > 3$ by Lemma~\ref{nolink}, and so $|N_{G'}(x) \cap V(C)| \geq 3$, a contradiction. So the claim holds.  

Now suppose that there is a vertex $x$ which is not proper for $W'$. If $x$ has neighbors in at most one sector of $W'$, then $|N_G(x) \cap V(C')| \geq 3$, but we proved above that $|N_{G'}(x) \cap V(C)| \geq 3$, and so, since $W$ is proper, $x$ is adjacent to $v$, a contradiction. It follows that $x$ has neighbors in more than one sector of $W'$. Since $x$ is proper for $W$, it follows that $x$ has a neighbor in $P^*$ and thus, either $x \in V(K)$ or $x$ is adjacent to $z$. Since $x$ is proper for $W$, it follows that $N_G(x) \cap V(C')$ is contained in the sectors of $W'$ intersecting $P^*$. In particular, there are exactly two such sectors $S_1$ and $S_2$ of $W'$, they are consecutive, and $v$ has a neighbor in $P^*$. Consequently, $v$ is adjacent to $z$ and $z$ is a spoke in $W$. 

We claim that $x$ has at most two neighbors in $V(C')$. If $x \in V(K)$ then $N_G(x) \cap V(C') \subseteq V(P)$ and we have already shown that every vertex of $K$ has at most two neighbors in $P$. Thus we may assume that $x\not\in K$, and so $x$ is adjacent to $z$. Since $G'$ is triangle-free by Lemma~\ref{lem:contract}, it follows that $x$ is not adjacent to $v$. Since $x$ is proper for $W$, it follows that $x$ has at most two neighbors in $V(C)$, and hence in $V(C')$, by our first claim. This proves our second claim. It follows that $x$ has exactly one neighbor $s_1$ in $S_1 \setminus S_2$ and exactly one neighbor $s_2$ in $S_2 \setminus S_1$. If $x$ is non-adjacent to $v$, then $G|(V(S_1) \cup V(S_2) \cup \sset{x, v})$ is an $\isk$ in $G$, a contradiction. Therefore, $x$ is adjacent to $v$ and can be linked to the cycle $G|(V(S_1) \cup \sset{v})$ via $x-v$, $x-s_1$, and a subpath of $x-s_2-S_2$. Therefore $W'$ is a proper wheel in $G$. This proves~\eqref{eq:zinC}. 

\bigskip

By~\eqref{eq:zinC}, we may assume that $z\not\in V(C)$. So $W$ is a wheel in $G$. Since $W$ is proper in $G'$, there is a sector $S$ of $W$ containing all neighbors of $z$ in $C$. Then clearly the following holds. 

\vspace*{-0.4cm}\begin{equation} \label{eq:kristina1}
  \longbox{\emph{For every $x \in K$, $N_G(x) \cap V(C) \subseteq N_{G'}(z) \cap V(C) \subseteq V(S)$.}}
\end{equation} 

Next we claim the following.

\vspace*{-0.4cm}\begin{equation} \label{eq:kristina2}
  \longbox{\emph{If $z$ is not adjacent to $v$, then $W$ is a proper wheel in $G$.}}
\end{equation} 

If $x \in G \setminus (V(C) \cup V(K))$, then $x$ is proper for $W$ is in $G$ as $x$ is proper for $W$ in $G'$. Now consider a vertex $x \in V(K)$. Since $z$ is not adjacent to $v$, and $W$ is proper in $G'$, it follows that $|N_{G'}(z) \cap V(C)| \leq 2$. Then by~\eqref{eq:kristina1}, $|N_G(x) \cap V(C)| \leq 2$, and hence $x$ is proper for $W$ in $G$. This proves~\ref{eq:kristina2}. 

\bigskip

By~\eqref{eq:kristina2}, we may assume that $z$ is adjacent to $v$. Let $a$ and $b$ be the ends of $S$. We now define a sequence of wheels in $G$ with center $v$. Let $W_1 = W$ and $S_1 = S$. Assume that wheels $W_1, \dots, W_i$ have been defined, and define $W_{i+1}$ as follows. If there is a vertex $x_i \in V(K)$ that is not adjacent to $v$ and has at least three neighbors in $S_i$, then let $S_{i+1}$ be the path from $a$ to $b$ in $G|(V(S_i) \cup \sset{x_i})$ that contains $x_i$, and (by~\eqref{eq:kristina1}) let $W_{i+1}$ be the wheel obtained form $W_i$ by replacing $S_i$ by $S_{i+1}$. Since $S_{i+1}$ is strictly shorter than $S_i$, this sequence must stop at some point; say it stops with wheel $W_t$. For $1 \leq i \leq t$, let $C_i$ be the rim of $W_i$ (so $V(C_i) = (V(C) \setminus V(S)) \cup V(S_i)$). Then $W_t = (C_t, v)$ is a wheel in $G$ such that every vertex of $K$ that has at least three neighbors in $S_t$ is adjacent to $v$. We will show that $W_t$ is a proper wheel in $G$, but first we show the following. 



\vspace*{-0.4cm}\begin{equation} \label{eq:induction}
  \longbox{\emph{For $1 \leq i < t$, if a vertex $y$ is proper for $W_i$, then $y$ is proper for $W_{i+1}$.}}
\end{equation} 

Suppose that $y$ is proper for $W_i$ and not proper for $W_{i+1}$. Then $y$ is adjacent to $x_i$. Suppose first that $y$ is non-adjacent to $v$ and $|N_G(y) \cap V(C_{i+1})| \geq 3$. Since $y$ cannot have three neighbors in $C_{i+1}$ by Lemma~\ref{nolink}, it follows that $|N_G(y) \cap V(C_{i+1})| > 3$. Moreover, since $N_G(y) \cap V(C_{i+1}) \subseteq \sset{x_i} \cup (N_G(y) \cap V(C_i))$, it follows that $|N_G(y) \cap V(C_i)| \geq 3$. But then $y$ is not proper for $W_i$, a contradiction. It follows that $y$ has a neighbor in $C_{i+1} \setminus S_{i+1} = C \setminus S$, and thus $y \not\in V(K)$. Therefore $y \in V(G')$, and since $y$ is adjacent to $x_i$ in $G$, it follows that $y$ is adjacent to $z$ in $G'$. Since $z$ is adjacent to $v$ in $G'$ and $G'$ is triangle-free by Lemma~\ref{lem:contract}, it follows that $y$ is non-adjacent to $v$. Note that since $i+1>1$, it follows that a vertex of $K$ has a neighbor in $S^*$, and therefore $z$ has a neighbor in $S^*$. Since $W$ satisfies the hypotheses of Theorem~\ref{wheelmain0}, and since $y-z$ is a path containing exactly one neighbor of $v$, it follows that the neighbors of $y$ in $C$ are in a sector $S'$ of $W$ consecutive with $S$. Since $y$ is non-adjacent to $v$, it follows that $y$ has at most two neighbors in $S'$. Note that since $G'$ is triangle-free, and $N_{G'}(z) \cap V(C) \subseteq V(S)$, it follows that $z$ has no neighbors in $S'$. If $y$ has exactly two neighbors in $S'$, then $y$ can be linked in $G'$ to the hole $v - S' - v$ via two one-edge paths and the path $y-z-v$. So $y$ has exactly one neighbor $r$ in $S'$ that is in $V(S') \setminus V(S)$, and now $z$ can be linked to $v-S'-v$ via the paths $z-v$, $z-y-r$, and a path with interior in $S$, contrary to Lemma~\ref{nolink}. This concludes the proof of~\eqref{eq:induction}. 

\bigskip
Every vertex in $G \setminus V(K)$ is proper for $W$ and hence it is proper for $W_t$ by~\eqref{eq:induction}. Suppose that there is a vertex $x \in V(K)$ that is not proper for $W_t$. By~\eqref{eq:kristina1}, $N_G(x) \cap V(C_t) \subseteq V(S_t)$. So $x$ is non-adjacent to $v$ and has at least three neighbors in $S_t$, contradicting the assumption that the wheel sequence terminates with $W_t$. Therefore, $W_t$ is a proper wheel in $G$ with center $v$. 

\end{proof}

\section{Tools}

In this section we develop tools for our main theorem for finding a vertex of degree one, or a cycle with all but a few vertices of degree two. 


\begin{lemma} \label{lem:tree} Let $G$ be a graph, $x \in V(G)$, such that $G \setminus x$ is a forest. Then either $V(G) = N[x]$ and $G \setminus x$ is stable, or $V(G) \setminus N[x]$ contains a vertex of degree at most one in $G$, or $G$ contains an induced cycle $C$ containing $x$ such that every vertex of $V(C) \setminus \sset{x}$ except for possibly one has degree two in $G$.
\end{lemma}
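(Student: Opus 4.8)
The plan is to argue by induction on $|V(G)|$. Write $F=G\setminus x$, a forest. If $V(G)=N[x]$ and $F$ is stable, or if some vertex of $V(G)\setminus N[x]$ has degree at most one in $G$, we are done, so assume neither holds. Two consequences are worth isolating. First, since no vertex of $V(G)\setminus N[x]$ has degree at most one in $G$, every vertex of $F$ of degree at most one in $F$ lies in $N(x)$; in particular every leaf and every isolated vertex of $F$ is in $N(x)$, and every vertex of $F$ outside $N[x]$ has degree at least two in $F$, hence in $G$. Second, $F$ has an edge: if $V(G)\neq N[x]$, then any $w\in V(G)\setminus N[x]$ has two neighbours in $F$, and otherwise $F$ would be a stable graph with $V(G)=N[x]$, a contradiction.

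Now pick a component $T$ of $F$ containing an edge, let $p_0$ be a leaf of $T$ (so $p_0\in N(x)$ and $\deg_G(p_0)=2$), and walk: having reached $p_i$ with $\deg_F(p_i)=2$, let $p_{i+1}$ be the neighbour of $p_i$ other than $p_{i-1}$. Since $F$ is a forest this walk never repeats a vertex, so it stops at some $p_k$ with $k\ge 1$ and $\deg_F(p_k)\neq 2$; the path $p_0-p_1-\ldots-p_k$ is induced in $F$. If some $p_t$ with $1\le t\le k$ lies in $N(x)$, choose $t$ minimal with this property (this is possible, since $t=k$ works whenever $\deg_F(p_k)=1$). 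Then $p_1,\ldots,p_{t-1}\notin N(x)$ and each has degree $2$ in $F$, hence degree $2$ in $G$, and $x-p_0-p_1-\ldots-p_t-x$ is an induced cycle all of whose vertices other than $x$ have degree $2$ in $G$, with the possible exception of $p_t$. So in this case we are done.

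Otherwise $p_1,\ldots,p_k\notin N(x)$; then $\deg_F(p_k)\ge 3$ (otherwise $p_k$ would be a leaf of $F$, hence in $N(x)$), and $p_1,\ldots,p_{k-1}$ all have degree $2$ in $G$. Put $G''=G\setminus\{p_0,\ldots,p_{k-1}\}$ and $F''=G''\setminus x$, again a forest. The only vertex outside $\{p_0,\ldots,p_{k-1}\}$ with a neighbour in that set is $p_k$, and $\deg_{F''}(p_k)=\deg_F(p_k)-1\ge 2$; hence every vertex of $F''$ of degree at most one in $F''$ already had degree at most one in $F$, so lies in $N(x)$. As in the first paragraph it follows that no vertex of $V(G'')\setminus N_{G''}[x]$ has degree at most one in $G''$, and $F''$ is not stable since it contains the edge at $p_k$; so the two exceptional outcomes fail for $(G'',x)$. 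Since $|V(G'')|<|V(G)|$, the induction hypothesis gives an induced cycle $C$ in $G''$ through $x$ with at most one vertex of $V(C)\setminus\{x\}$ of degree $\neq 2$ in $G''$.

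It remains to pull $C$ back to $G$, which is the crux. Degrees in $G$ and $G''$ agree off $p_k$, while $\deg_G(p_k)=\deg_{G''}(p_k)+1\ge 3$; this is precisely why the walk was made to stop at a branch vertex, so that $p_k$ stays a vertex of degree at least three in $G$ and can absorb the single permitted exception. If $p_k\notin V(C)$ we are done, since $C$ is already an induced cycle of $G$ with the required property. Otherwise $p_k\in V(C)$; since $p_k\notin N(x)$, the vertices $x$ and $p_k$ are non-consecutive on $C$ and split it into two arcs, each with an interior vertex, and, $C$ having at most one vertex of $V(C)\setminus\{x\}$ of degree $\neq 2$ in $G''$, at least one of the two $x$–$p_k$ arcs, say $A$, has all its interior vertices of degree $2$ in $G''$. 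Set $C^\ast:=x-A-p_k-p_{k-1}-\ldots-p_0-x$. This is an induced cycle through $x$: the $p_i$'s form an induced path, $A$ is an induced path, and (using $p_1,\ldots,p_{k-1}\notin N(x)$) the only $G$-edges between $\{p_0,\ldots,p_{k-1}\}$ and $V(A)$ are $p_{k-1}p_k$ and $p_0x$, both of which are edges of $C^\ast$. On $C^\ast$ the interior of $A$ consists of degree-$2$ vertices of $G''=G$ (they are distinct from $p_k$), the vertices $p_0,\ldots,p_{k-1}$ have degree $2$ in $G$, and $p_k$ has degree at least three; so $p_k$ is the only possible exception, which completes the induction. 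The delicate points are checking that $(G'',x)$ again violates both exceptional outcomes, so that the induction hypothesis returns a cycle rather than one of the exceptions, and verifying that $C^\ast$ is genuinely induced.
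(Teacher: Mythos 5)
Your proof is correct, and it takes a genuinely different route from the paper's. The paper argues directly: it takes a component $T$ of $G\setminus x$ with an edge, lets $T'$ be the minimal subtree spanning all vertices of degree at least three in $T$, picks a leaf $a$ of $T'$, and extracts a leaf-to-leaf path $P$ of $T$ passing through no high-degree vertex other than $a$; both ends of $P$ must be adjacent to $x$ (else a degree-one vertex outside $N[x]$ appears), and the cycle is obtained by closing $P$ through $x$ after trimming at the neighbour $v''$ of $x$ closest to the starting leaf. You instead walk from a leaf and stop at the first neighbour of $x$ or the first branch vertex, whichever comes first; in the former case you close the cycle immediately with at most one exceptional vertex, and in the latter you delete the walked degree-two prefix, invoke induction on the smaller graph, and splice the prefix back into the returned cycle through $p_k$. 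Your induction is longer and requires the extra verifications you flag (that the two non-cycle outcomes still fail for $(G'',x)$, and that $C^\ast$ is induced), but all of these checks go through; the key observation that $p_k$ retains degree at least three in $G$ and so can absorb the single permitted exception is exactly right. One point in your favour: by stopping the walk at the \emph{first} neighbour of $x$, every vertex strictly before the stopping vertex is guaranteed to have degree two in $G$, whereas the paper's one-shot construction has to handle the trimming vertex $v''$ and the branch vertex $a$ simultaneously, and its closing sentence glosses over the case where both lie on the cycle and are distinct; your recursion cleanly separates these two sources of high degree at the cost of an inductive set-up. The only cosmetic slip is the sentence claiming that $p_k$ is the only vertex outside $\{p_0,\dots,p_{k-1}\}$ with a neighbour in that set ($x$ is another, via $p_0$), but you account for the edge $xp_0$ correctly where it matters, both in the degree computation inside $F''$ and in checking that $C^\ast$ is induced.
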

\begin{proof}
If every component of $G\setminus x$ contains exactly one vertex, then either $V(G) = N[x]$ or $V(G) \setminus N[x]$ contains a vertex of degree zero. Hence, we may assume that there exists a component $T$ of $G\setminus x$ with at least two vertices, and $T$ is a tree. Let $A$ be the set of vertices of degree at least three in $T$. If $A$ is non-empty, then let $T'$ be the subtree of $T$ that contains all vertices of $A$ and minimal with respect to this property, and let $a$ be a leaf of $T'$. There is a path $P = v - \dots - v'$ in $T$, whose ends are distinct leaves of $T$ and $P$ contains at most one vertex of degree three in $T$ (namely $a$). This is trivial is $A$ if empty, and follows from the definition of $a$ otherwise.

If $x$ is non-adjacent to $v$, then $v$ is a vertex in $V(G) \setminus N[x]$ of degree one in $G$, so we may assume that $x$ is adjacent to $v$, and similarly for $v'$. Now, let $v''$ be the neighbor of $x$ in $P\setminus v$ closest to $v$ along $P$. We set $C = x - v-P-v''-x$ and observe that all vertices of $C$ except possibly $x$ and $a$ have degree two in $G$.
\end{proof}

\begin{lemma} \label{lem:farcycle} Let $G$ be a series-parallel graph, and let $x, y \in V(G)$ with $x=y$ or $xy \in E(G)$. If $G \setminus \sset{x,y}$ contains a cycle, then there is an induced cycle $C$ in $G$ such that $V(C) \cap \sset{x,y} = \emptyset$ and all but at most two vertices of $C$ have degree two in $G$ (and are thus anticomplete to $\sset{x,y}$), or $V(G) \setminus (N[x] \cup N[y])$ contains a vertex of degree at most one in $G$. 
\end{lemma}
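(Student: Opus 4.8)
The plan is to exploit the structure of series-parallel graphs, in particular Theorem~\ref{thm:duffin}, which tells us every series-parallel graph has a vertex of degree at most two, and which we can apply repeatedly to the subgraph obtained by deleting $x$ and $y$. First I would reduce to the connected case: since $x=y$ or $xy\in E(G)$, the set $\sset{x,y}$ meets at most one component of $G\setminus\sset{x,y}$ nontrivially in the sense that any component of $G\setminus\sset{x,y}$ not adjacent to $x$ or $y$ is itself a series-parallel graph with a vertex of degree at most two in $G$; if that component contains a cycle we are in good shape via the argument below applied to it, and if it is a forest with $\ge 1$ vertex we can find a leaf (degree $\le 1$ in $G$) unless it is a single isolated vertex. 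So I would focus on a component $K$ of $G\setminus\sset{x,y}$ that contains a cycle, and set $Y=\sset{x,y}\cap N(V(K))$, working inside $G|(V(K)\cup Y)$.

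The core idea is an iterative ``peeling'' of low-degree vertices. Inside $G|(V(K)\cup Y)$, repeatedly delete vertices of $V(K)$ that have degree at most one in the current graph; this is legitimate because such a vertex, if its degree in the original $G$ is also $\le 1$, immediately gives the conclusion (it lies in $V(G)\setminus(N[x]\cup N[y])$ when it is not adjacent to $x$ or $y$), and otherwise its degree in $G$ comes entirely from edges to $x$ or $y$, so it has degree $\le 2$ in $G$ and is not in a cycle we care about, hence can be discarded without destroying any relevant cycle. After peeling, every remaining vertex of $V(K)$ has degree $\ge 2$ in the reduced graph $G''$. Now I would use Theorem~\ref{thm:duffin} on $G''\setminus Y$ (still series-parallel, still contains a cycle since we only removed degree-$\le 1$ vertices): it has a vertex $w$ of degree $\le 2$ in $G''\setminus Y$. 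If $w$ has degree $\le 2$ in $G''$ as well and is nonadjacent to $x,y$ we would want to extract a cycle through $w$ all of whose internal vertices are degree-two; more robustly, I would trace a maximal path of degree-two vertices (degree two in $G''$, which after peeling forces degree two in $G$ for vertices not adjacent to $x$ or $y$) and close it up, analogously to Lemma~\ref{lem:tree} and Lemma~\ref{lem:farcycle}'s own tree lemma. Specifically, consider the subgraph induced on $V(K)$ minus $Y$: if it is acyclic, apply a $K_{1,3}$-or-path argument as in Lemma~\ref{lem:treecnn}/Lemma~\ref{lem:tree}; if it contains a cycle $C_0$, then $C_0$ is an induced cycle disjoint from $\sset{x,y}$ and by the peeling all its vertices have degree $\ge 2$ in $G''$; the vertices of $C_0$ of degree exactly two in $G''$ are automatically anticomplete to $\sset{x,y}$ and have degree two in $G$, so I only need to bound the number of vertices of $C_0$ with degree $\ge 3$ in $G''$.

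The bound ``at most two'' should come from the following: a vertex of $C_0$ with extra neighbors in $G''\setminus Y$ would, together with $C_0$, create a $K_4$-subdivision (a wheel-like configuration) unless very few such vertices exist — but series-parallel graphs are wheel-free by Theorem~\ref{thm:duffin}, so actually no vertex of $V(K)\setminus V(C_0)$ can have three or more neighbors on $C_0$, and more care shows the ``branch'' structure attached to $C_0$ is limited. To make the count exactly two, I would pick $C_0$ cleverly — e.g. a shortest cycle, or a cycle chosen to maximize the number of degree-two vertices — and argue that at most the two ``attachment points'' where the rest of $K$ connects to $C_0$ can have degree $\ge 3$; if there were three such attachment points, one could route three internally-disjoint paths from them into the rest of $K$ (which is connected to $x$ or $y$, or connected among itself) to build a $K_4$-subdivision, contradicting series-parallelism. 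Alternatively, the attachments to $x$ and $y$ account for at most two high-degree vertices directly, and any further branching inside $K$ is handled by the forest/$K_{1,3}$ analysis to find a degree-$\le 1$ vertex far from $x,y$ instead.

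I expect the main obstacle to be pinning down the constant ``two'' cleanly: separating the two legitimate sources of degree-three vertices on the cycle (adjacency to $x$, adjacency to $y$, or a branch vertex of the tree-part of $K$) and showing that a \emph{third} such vertex forces either a $K_4$-subdivision (ruled out by series-parallelism) or the existence of a vertex of degree $\le 1$ in $V(G)\setminus(N[x]\cup N[y])$. Handling the various degenerate cases ($x=y$, $K$ adjacent to only one of $x,y$, $K\setminus Y$ acyclic but $K$ itself containing a cycle through a vertex of $Y$) will require some case analysis but should be routine given Lemma~\ref{lem:tree}, Lemma~\ref{lem:treecnn}, and Theorem~\ref{thm:duffin}.
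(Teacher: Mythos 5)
There is a genuine gap at the central step of your plan: the claim that three vertices of degree at least three on your chosen induced cycle $C_0$ force a subdivision of $K_4$. Three internally disjoint paths leaving a cycle yield a $K_4$-subdivision only if they link up in the right way (for instance, converge at a common vertex); if the branches simply return to the cycle elsewhere, no $K_4$-subdivision need arise. Concretely, take an induced cycle $C_0$ with four distinguished vertices $a,b,c,c'$ in this cyclic order, attach the ear $a-x-y-b$ across the arc from $a$ to $b$, and attach the ear $c-w-c'$ across the arc from $c$ to $c'$, the two ears being vertex-disjoint and non-crossing. Contracting each arc and each ear to a single edge gives a $4$-cycle on $a,b,c,c'$ with the edges $ab$ and $cc'$ doubled, which has no $K_4$ minor; so the graph is series-parallel. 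It satisfies the hypotheses ($xy\in E(G)$, a cycle avoids $\sset{x,y}$, every vertex has degree at least two), yet $C_0$ has \emph{four} vertices of degree three --- two attached to $\sset{x,y}$ and two attached to an internal branch that returns to the cycle. The lemma still holds, via the short cycle through $w$ and the $c$-$c'$ arc, which has only two high-degree vertices; but your argument applied to $C_0$ yields no contradiction, and your proposed extremal choice ``maximize the number of degree-two vertices'' actually prefers $C_0$ to the good cycle once the arcs of $C_0$ are made long. The same example defeats the fallback ``attachments to $x$ and $y$ account for at most two high-degree vertices, and further branching gives a leaf'': $x$ and $y$ can each see two cycle vertices, and internal branching can close up into an ear rather than produce a vertex of degree at most one. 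So the choice of cycle and the count of its high-degree vertices --- the entire content of the lemma --- are not actually established.

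The paper obtains the constant two from a different mechanism. Since $G$ is series-parallel, there are no three internally disjoint paths from $x$ (after contracting $xy$) to the cycle, so by Menger's theorem some set $X$ with $|X|\le 2$ separates $x$ from the cycle; choosing $X$ and then the far side of the separation minimal, one shows (using a $K_4$-subdivision built from two disjoint $x'$-$y'$ paths, a path joining them, and a path through the near side) that the far side decomposes into internally disjoint $x'$-$y'$ paths whose internal vertices have no other neighbours; the union of two of these paths is the desired cycle, and the ``at most two'' exceptional vertices are precisely the two separator vertices. Your peeling step and reduction to one component are fine as far as they go (note that the justification ``the missing degree comes from $x$ or $y$'' really rests on the observation that any peeled vertex has its unique $G$-neighbour in $\sset{x,y}$, so the peeling never cascades), but the main count is missing.
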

\begin{proof}
By contracting the edge $xy$ and deleting any parallel edges that may arise, we may assume that $x=y$. We may further assume that every vertex except for possibly $x$ has degree at least two, because vertices of degree one in $N(x)$ can be deleted without affecting the hypotheses or the conclusions, and if there is a vertex of degree at most one in $V(G) \setminus N[x]$, then the conclusion holds. 

Let $C$ be a cycle in $G \setminus x$. Since $G$ is series-parallel and by the definition of series-parallel graphs, it follows that there do not exist three paths from $x$ to $V(C)$ that are vertex disjoint except for $x$ in $G$. By Menger's theorem \cite{menger}, it follows that there is a partition  $(X, Y, Z)$ of $V(G)$ with $X$ of size at most two, and $Y,Z \neq \emptyset$ such that $Y$ is anticomplete to $Z$ in $G$, $V(C) \subseteq Y \cup X$ and $x \in Z$.

We choose a partition $(X, Y, Z)$ with $|X|$ minimal, and subject to that, $|X \cup Y|$ minimal, such that $Y$ is anticomplete to $Z$ in $G$, $Y, Z \neq \emptyset$, $x \in Z$, and $G|(Y \cup X)$ contains a cycle. It follows that $|X| \leq 2$. 

Suppose first that $X = \emptyset$. If $G|Y$ is an induced cycle, the result follows. Otherwise, since $G|Y$ contains a cycle, it follows that there is a vertex $x'$ such that $G|(Y \setminus \sset{x'})$ contains a cycle. By induction applied to $G|Y$ and the vertex $x'$, the result follows. 

Next, suppose that $X = \sset{x'}$. If $G|Y$ is a forest, then $x' \neq x$ and thus we obtain the desired result by applying Lemma~\ref{lem:tree} to $G|(X \cup Y)$. Otherwise, we apply induction to $G|(X \cup Y)$ and $x'$, and again, the result follows. 

It follows that $X = \sset{x', y'}$, and therefore, the component of $G|(Z \cup X)$ containing $x$ contains $x'$ and $y'$, for otherwise $\sset{x'}$ or $\sset{y'}$ would be a better choice of $X$ for the partition. Suppose that $G|Y$ is connected. If there is a vertex $z$ such that every $x'-y'$-path with interior in $G|Y$ uses $z$, then $\sset{x',z}$ or $\sset{y', z}$ yields a better choice of $X$ and partition. Therefore, by Menger's theorem \cite{menger}, there are two disjoint paths $P_1, P_2$ from $x'$ to $y'$ with interior in $Y$, and since $G|Y$ is connected, it follows that there is path $Q$ from $P_1$ to $P_2$ in $Y$. Moreover, there is a path $R$ from $x'$ to $y'$ with interior in $Z$ since the component of $G|(Z \cup X)$ containing $x$ also contains $x'$ and $y'$; but $P_1 \cup P_2 \cup Q \cup R$ is a (not necessarily induced) subdivision of $K_4$ in $G$, contrary to the fact that $G$ is series-parallel. Thus $G|Y$ is not connected. By the minimality of $X \cup Y$, for every component $K$ of $G|Y$, the graph $G|((X \setminus \sset{x}) \cup V(K))$ contains no cycle. However, $G|(X \cup Y)$ contains a cycle $C$ not using $x$, and so $C$ contains vertices from more than one component of $G|Y$. It follows that $x', y' \in V(C)$, and thus $x \not\in \sset{x', y'}$. Therefore, for every component $K$ of $G|Y$, the graph $G|(X \cup V(K))$ is a tree. Since $K$ is connected, it follows that $x', y'$ are leaves. If $G|(X \cup V(K))$ contains a leaf other than $x', y'$, then the result follows. So each component is a path from $x'$ to $y'$, and no vertex of the path except for $x', y'$ has further neighbors in $G$. But then the union of two of those paths (there are at least two, since $G|Y$ is not connected) yields the desired cycle; the result follows. 
\end{proof}

\begin{theorem} \label{thm:girth} Let $G$ be an $\isktk$-free graph, $x, y \in V(G)$ with $x=y$ or $xy \in E(G)$. Then either 
\begin{itemize}
\item $V(G) = N[x] \cup N[y]$;
\item there exists a vertex in $V(G) \setminus (N[x] \cup N[y])$ of degree at most one in $G$;
\item there exists an induced cycle $C$ containing at least one of $x, y$ such that at most one vertex $v$ in $V(C) \setminus (N[x] \cup N[y])$ has $d(v) > 2$; or 
\item there exists an induced cycle $C$ containing neither $x$ nor $y$ and a vertex $z \in V(C)$ such that at most one vertex $v$ in $V(C) \setminus N[z]$ has $d(v) > 2$. 
\end{itemize}
\end{theorem}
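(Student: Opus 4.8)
The plan is to proceed by induction on $|V(G)|$. We may assume the first two outcomes fail, so $R := V(G) \setminus (N[x] \cup N[y])$ is nonempty and every vertex of $R$ has degree at least two in $G$. If $G$ is disconnected, then $x$ and $y$ lie in one component and any other component $G_1$ has $V(G_1) \subseteq R$; applying the inductive hypothesis to $G_1$ with one vertex of $G_1$ in the role of both $x$ and $y$, the first two outcomes for $G_1$ are impossible (minimum degree at least two forbids the second, and forbids the first since a triangle‑free graph with a dominating vertex is a star), and the last two outcomes for $G_1$ give the fourth outcome for $G$. So we may assume $G$ is connected. Next, if $G \setminus \{x,y\}$ is a forest, then when $x=y$ we apply Lemma~\ref{lem:tree} directly to $(G,x)$, and when $xy \in E(G)$ we contract the edge $xy$ to a vertex $w$, obtaining a simple graph $G^{\circ}$ with $G^{\circ} \setminus w$ a forest (simplicity holds because $N(x) \cap N(y) = \emptyset$ by triangle‑freeness), and apply Lemma~\ref{lem:tree} to $(G^{\circ}, w)$. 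Since every neighbour of $w$ in $G^{\circ}$ is a neighbour of exactly one of $x,y$, undoing the contraction turns the three conclusions of Lemma~\ref{lem:tree} into the first three outcomes of the theorem: the cycle produced passes through $x$ (or through both $x$ and $y$), and its vertices outside $N[x]\cup N[y]$ keep their degrees, so at most one of them has degree greater than two.

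Hence we may assume $G \setminus \{x,y\}$ contains a cycle. As $G$ is triangle‑free it is $\iskt$‑free, so by Theorem~\ref{lem:sub} it is series‑parallel, contains a $K_{3,3}$ subgraph, or contains a wheel. If $G$ is series‑parallel, apply Lemma~\ref{lem:farcycle} to $(G,x,y)$; its first conclusion is excluded by our assumption, so we get an induced cycle $C$ disjoint from $\{x,y\}$ with at most two vertices of degree greater than two, and choosing $z$ to be one of those vertices (or any vertex of $C$ if there is at most one) leaves at most one vertex of $V(C)\setminus N[z]$ of degree greater than two — the fourth outcome. If $G$ contains a $K_{3,3}$ subgraph, then by Theorem~\ref{thm:todo} $G$ is complete bipartite or has a clique cutset; in the complete‑bipartite case, $K_{3,3}$‑freeness forces $G \cong K_{1,n}$ or $G \cong K_{2,n}$, and since the hypothesis $x=y$ or $xy\in E(G)$ places $x,y$ on opposite sides (or makes them equal), a short direct check gives the first or third outcome (for instance, if $G=K_{2,n}$ with $n\ge 3$ and $x=y$ lies in one side, an induced four‑cycle through $x$ has its unique vertex outside $N[x]$ of degree two).

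The remaining cases — $G$ has a clique cutset or contains a wheel — are handled by applying the inductive hypothesis to a strictly smaller piece of $G$. If $K$ is a clique cutset of $G$ (so $|K|\le 2$, since $G$ is triangle‑free), write $V(G)=A \sqcup K \sqcup B$ with $A,B$ nonempty, no edges between $A$ and $B$, and, using that $\{x,y\}$ is not split by $K$, with $A$ disjoint from $\{x,y\}$; then apply the inductive hypothesis to $G_A := G|(K\cup A)$ with the vertices of $K$ in the role of $(x,y)$ (when $|K|=1$, its vertex twice; $K=\emptyset$ is the disconnected case). Every vertex of $A$ has all its neighbours inside $V(G_A)$, hence the same degree in $G$ and in $G_A$, and this lets each outcome for $G_A$ be lifted to the second or fourth outcome for $G$, the one caveat being that vertices of $K$ may gain degree in $G$, which is absorbed by taking $z$ among the (at most two) vertices of $K$ on the produced cycle; the first outcome for $G_A$ requires a further reduction (descending to a still smaller piece, or re‑contracting $K$ and invoking Lemma~\ref{lem:tree}). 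The wheel case is analogous: by Lemma~\ref{proper} $G$ has a proper wheel, and letting $s$ be its centre and $W=(C,s)$ a proper wheel with fewest spokes among those centred at $s$, Theorem~\ref{wheelmain0} shows $N(s)$ is a cutset leaving at least three components each containing the interior of a distinct sector of $W$, so at least one such component $D$ avoids $\{x,y\}$; by Lemmas~\ref{lem:starcutset}, \ref{lem:contract} and \ref{lem:non-centers}, the subgraph of $G$ induced on $V(D)$ together with $s$ and the spokes having a neighbour in $D$ is again $\isktk$‑free, is strictly smaller, and has no new proper‑wheel centre, so the inductive hypothesis applies to it, and its conclusion lifts back to $G$ with the sector interior lying inside $D$ furnishing the \emph{far} part of the cycle.

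The main obstacle is exactly this bookkeeping in the recursive step: the conclusions of the theorem, and of Lemmas~\ref{lem:tree} and~\ref{lem:farcycle}, only promise control away from a neighbourhood, and these weak guarantees must be composed through the decompositions without losing more than one exceptional vertex. The two delicate points are ensuring that in the clique‑cutset case the first outcome for the smaller piece does not stall the induction — it must return the first or second outcome for $G$ — and that in the wheel case the star cutset $N(s)$ genuinely peels off a piece that is both smaller and still $\isktk$‑free with no fresh proper‑wheel centre, which is precisely what Lemmas~\ref{lem:starcutset}--\ref{lem:non-centers} were set up to supply.
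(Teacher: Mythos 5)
Your series-parallel branch is essentially the paper's argument (Lemma~\ref{lem:tree} after contracting $xy$, Lemma~\ref{lem:farcycle} otherwise), and your overall plan --- otherwise peel off a piece of $G$ along the star cutset of a proper wheel centre and induct --- is also the paper's. Two remarks before the main issue. First, your $K_{3,3}$-subgraph branch, and with it your entire clique-cutset case, is vacuous: $G$ is $K_{3,3}$-free by hypothesis, and in a triangle-free graph a $K_{3,3}$ subgraph on six vertices is automatically induced, so Theorem~\ref{lem:sub} leaves only ``series-parallel'' or ``contains a wheel''. This is fortunate, because your proposed lift of the third outcome through a two-vertex clique cutset $K=\sset{k_1,k_2}$ (with $k_1,k_2$ playing the roles of $x,y$) controls the exceptional vertices only relative to $N[k_1]\cup N[k_2]$, which does not fit into the fourth outcome's single neighbourhood $N[z]$. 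Second, Lemmas~\ref{lem:contract} and~\ref{lem:non-centers} concern contracting a component and are not needed here; Theorem~\ref{thm:girth} has no non-center hypothesis, so only the $\isktk$-freeness of an induced subgraph (automatic) is required.

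The genuine gap is in the wheel case. You take a component $D$ of $G\setminus N[s]$ that merely \emph{avoids} $\sset{x,y}$ and apply induction to $H=G|(V(D)\cup N_D\cup\sset{s})$ with the pair $(s,s)$. But the hypotheses allow $x$ or $y$ to lie in $N[s]$, and then $D$ may contain neighbours of $x$ or $y$, and $x$ itself may belong to the attaching set $N_D$ and hence to $V(H)$. In that situation the inductive conclusion does not lift: a degree-one vertex of $V(H)\setminus N_H[s]$ may lie in $N[x]\cup N[y]$, so the second outcome for $G$ is not obtained; and a cycle through $s$ produced by the third outcome may pass through $x\in N_D$, so the fourth outcome is unavailable, while the third outcome for $G$ fails because vertices of $V(C)\cap N_D$ outside $N[x]\cup N[y]$ can have degree two in $H$ but arbitrarily large degree in $G$. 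This is exactly the difficulty the paper's proof is organised around: it deletes $N[z]\setminus Z$ with $Z=\sset{x,y}\cap N(z)$ rather than $N[s]$, so that $x$, $y$ and \emph{all} of their surviving neighbours remain together in a single component; it then uses the \emph{second} part of Theorem~\ref{wheelmain0} to show that this component absorbs the interiors of at most two sectors, whence (there being at least four sectors) some other component together with its attaching set is entirely disjoint from $N[x]\cup N[y]$, and every outcome of the induction transfers verbatim. Without this refinement, or a separate argument for the case $\sset{x,y}\cap N[s]\neq\emptyset$, your induction step does not close.
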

\begin{proof}
Suppose first that $G$ is series-parallel. Define $H=G$ and $v =x$ if $x=y$, and define $H$ as the graph that arises from contracting the edge $xy$ to a new vertex $v$ if $x \neq y$. Then $H$ is series-parallel. Suppose that $H \setminus v$ is a forest, and apply Lemma~\ref{lem:tree}. If the first outcome of Lemma~\ref{lem:tree} holds, then $V(H) = N_H(v)$, and so $V(G) = N_G(x) \cup N_G(y)$. If the second outcome of Lemma~\ref{lem:tree} holds, then $V(H) \setminus N_H[v]$ contains a vertex of degree at most one in $H$, and so $V(G) \setminus (N_G[x] \cup N_G[y])$ contains a vertex of degree at most one in $G$. Finally, if the third outcome of Lemma~\ref{lem:tree} holds, then $H$ contains an induced cycle $C$ containing $v$ such that every vertex of $V(C) \setminus \sset{v}$ except for at most one has degree two in $H$, and so there is an induced cycle $C'$ in $G$ containing at least one of $x, y$ such that every vertex of $V(C') \setminus \sset{x,y}$ except for possibly one has degree two in $G$. This proves the result in the case that $H \setminus v$ is a forest. So $H \setminus v$ contains a cycle, and thus $G \setminus \sset{x,y}$ contains a cycle. By Lemma~\ref{lem:farcycle}, either $V(G) \setminus (N[x] \cup N[y])$ contains a vertex of degree at most one in $G$, or $G$ contains a cycle $C$ with $V(C) \cap \sset{x,y} = \emptyset$ and such that all but at most two vertices in $V(C)$ have degree two in $G$. In the former case, the second outcome of this theorem holds; in the latter case, the fourth outcome of this theorem holds by choosing $z \in V(C)$ with $d_G(z)$ maximum among vertices in $V(C)$. 

Thus we may assume that $G$ contains a proper wheel by Lemma~\ref{proper}; let $z$ be the center of a proper wheel (where possibly $z \in \sset{x,y}$). Let $W$ be such a wheel with minimum number of spokes.  Let $Z = \sset{x,y} \cap N(z)$. Since $x=y$ or $xy \in E(G)$, it follows that $x$ and $y$ are in the same component of $G \setminus (N[z] \setminus Z)$. Since $N(z)$ is stable, it follows that $|Z| \leq 1$. Therefore, by Theorem~\ref{wheelmain0}, the component of $G \setminus (N[z] \setminus Z)$ containing $\sset{x,y}\setminus \sset{z}$ includes the interiors of at most two sectors of $W$. Again by Theorem~\ref{wheelmain0}, the interior of every other sector of $W$ is contained in a separate component of $G \setminus (N[z] \setminus Z)$. Since $W$ has at least four sectors by Lemma~\ref{nolink}, there is a component $K$ of $G \setminus (N[z] \setminus Z)$ that does not contain $x$ and $y$, and that contains no neighbor of $x, y$. Let $N$ be the set of neighbors of $z$ with a neighbor in $K$. Then, we apply induction to $H=G|(V(K) \cup N \cup \sset{z})$ and $z$. By the choice of $H$ and $z$, the first outcome does not hold. If the second outcome holds for $H$ and $z$, then it holds for $G$ and $x,y$ as well, since $(N[x] \cup N[y]) \cap V(H) \subseteq N[z] \cap V(H)$. If the third or fourth outcome holds for $H$ and $z$, then the third or fourth outcome holds for $G$ and $x,y$. 
\end{proof}

\section{Main Result}

We say that $(G, x, y)$ has the property $\mathcal{P}$ if $V(G) \setminus (N[x] \cup N[y])$ contains a vertex of degree at most two in $G$.




We can now prove Theorem~\ref{thm:main0} which we restate:

\begin{theorem} \label{thm:main} Let $G$ be an $\isktk$-free graph which is not series-parallel, and let $(x,y)$ be a non-center pair for $G$. Then $(G,x,y)$ has the property $\mathcal{P}$. 
\end{theorem}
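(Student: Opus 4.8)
The plan is to argue by induction on $|V(G)|$, taking a minimum counterexample $(G,x,y)$ and using the earlier machinery to reduce to a purely bipartite situation where a long degree-two path can be extracted and pushed back to $G$. Since $G$ is not series-parallel, Theorem~\ref{lem:sub} gives a wheel in $G$, and Lemma~\ref{proper} upgrades this to a proper wheel. Because $(x,y)$ is a non-center pair, neither $x$ nor $y$ is the center of a proper wheel; so I may pick $s\in V(G)$, distinct from $x$ and $y$, that is the center of a proper wheel, and let $W=(C,s)$ be such a wheel with a minimum number of spokes. Write $C_1,\dots,C_k$ for the components of $G\setminus N[s]$, and $N_i$ for the neighbours of $s$ with a neighbour in $C_i$, and $G_i=G|(V(C_i)\cup N_i\cup\{s\})$. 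Lemma~\ref{lem:starcutset} tells us $s$ is not the center of a proper wheel in any $G_i$, and any other proper-wheel center in $G_i$ is one in $G$; combined with the fact that $x,y$ are not such centers, each $G_i$ that contains $x$ or $y$ still has $(x,y)$ — restricted appropriately — as a non-center pair. By Theorem~\ref{wheelmain0}, $k>1$, each $C_i$ meets the interior of at most one sector, and the component of $G\setminus(N[s]\setminus\{u\})$ through $u\in N(s)$ meets at most two sectors, which must share an endpoint.

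Next I would run minimality against the $G_i$'s. If some $G_i$ is not series-parallel, then applying the inductive hypothesis to $(G_i,x,y)$ (or to $(G_i,x',x')$ for a suitable vertex $x'$ if $x,y\notin V(G_i)$, noting $V(G_i)\subsetneq V(G)$) yields a vertex of degree at most two in $G_i$ outside $N[x]\cup N[y]$; since $s$ is a cutvertex-type separator, this vertex keeps its low degree in $G$ unless it lies in $N(s)$, and the structure of proper wheels lets us route around that — this gives property $\mathcal P$ for $G$, a contradiction. So every $G_i$ is series-parallel except possibly the one containing $x$ or $y$. Using Lemma~\ref{lem:contract} and Lemma~\ref{lem:non-centers}, I can contract each series-parallel $C_i$ with $|V(C_i)|>1$ to a single vertex without creating an $\iskk$ member, a triangle, or a new proper-wheel center; this is where we force that at most one $C_i$ has $|V(C_i)|>1$, and that if $|V(C_i)|>1$ then $\{x,y\}\cap V(C_i)\ne\emptyset$. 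After relabelling, $|V(C_i)|=1$ for $i<k$ and $\{x,y\}\cap V(C_k)\ne\emptyset$.

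Now form the bipartite graph $G'$ from $G\setminus\{s\}$ by contracting $V(C_k)\cup N_k$ to a single vertex $z$ (when $|V(C_k)|>1$; otherwise just delete $s$). The point is that $G'$ is $\iskk$-free, triangle-free — indeed bipartite with parts essentially $N(s)$ and $V(G)\setminus N[s]$ — and, using Lemma~\ref{nolink} and the properness of $W$, every cycle of $G'$ is long, with cycles avoiding $z$ even longer (girth $\ge 6$, and $\ge 8$ away from $z$). I would then apply Theorem~\ref{thm:girth} to $G'$ with the pair $(z,z)$ (or to an appropriate low-degree pair): either there is an easy win (a low-degree vertex of $G'$ in the right place, which pulls back to a vertex of degree $\le 2$ in $G$ outside $N[x]\cup N[y]$), or we obtain an induced cycle in $G'$ containing a long path $P$ all of whose internal vertices have degree two in $G'$, with $V(P)\subseteq V(G)\setminus(N[x]\cup N[y])$. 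A degree-two vertex $w$ of $G'$ on such a path has, in $G$, exactly its two cycle-neighbours plus possibly $s$; the girth bound forces at least one internal vertex of $P$ to be non-adjacent to $s$, hence of degree two in $G$, yielding property $\mathcal P$ and the final contradiction.

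The main obstacle I expect is the passage through $G'$: verifying that the contraction genuinely preserves $\iskk$-freeness and the girth lower bounds (especially the extra length away from $z$), and that a degree-two vertex of $G'$ survives as a degree-two vertex of $G$ — this requires carefully tracking which vertices of the cycle can be adjacent to $s$, and exploiting both the minimality of the number of spokes of $W$ and Lemma~\ref{nolink} to rule out short chords. Handling the bookkeeping when $x$ or $y$ lies inside the contracted blob $V(C_k)\cup N_k$, versus being a genuine vertex of $G'$, is the delicate case.
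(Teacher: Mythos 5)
Your plan follows the paper's strategy step for step (it is essentially the outline given in the paper's introduction), so the architecture is right; but the two places where you flag uncertainty are genuine gaps, and one quantitative claim is too weak to close the argument.

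First, the girth bounds you propose (at least $6$ for $G'$, at least $8$ away from $z$) do not suffice. With girth only $6$ through $z$, applying Theorem~\ref{thm:girth} with the pair $(z,z)$ can return a $6$-cycle $z-a-b-c-d-e-z$ whose only guaranteed degree-two vertices outside $N[z]$ are $b$ and $d$; since $N_{G'}(z)$ lies entirely on the non-$N(s)$ side of the bipartition, $b$ and $d$ lie in $N(s)$, hence are adjacent to $s$ in $G$ and may have degree three there, and no contradiction results. The proof in the paper needs girth at least $8$ for the bipartite graph and at least $16$ for $G'\setminus z$: the $16$ is exactly what produces a six-vertex subpath of degree-two vertices, which the final parity argument consumes. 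Establishing the $16$ is a substantial piece of work (claims \eqref{eq:star}, \eqref{eq:forest} and \eqref{eq:tree}: $G'$ has no proper wheel centered off $z$, and each component of $G'\setminus N[z]$ together with $N(z)$ induces a forest) that the proposal does not engage with at all.

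Second, the final degree count is incorrect as stated: a degree-two vertex $w$ of $G'$ on the long path has in $G$ its two path-neighbours, possibly $s$, \emph{and possibly a neighbour in $N_k$} (an edge absorbed into $z$ by the contraction), so non-adjacency to $s$ alone does not give degree two in $G$. The missing ingredient is that $z$ lies in no $4$-cycle of $G'$ (every vertex has at most one neighbour in $N_k$, and two such vertices at distance two would yield an $\isk$ through $s$); hence among the consecutive degree-two path vertices on the non-$N(s)$ side, at most one is adjacent to $z$, and the other has degree two in $G$. A similar looseness occurs earlier: when some $G_i$ is not series-parallel, ``the structure of proper wheels lets us route around'' a low-degree vertex landing in $N(s)$ is not an argument; the correct move is to apply induction to $(G_i,x',y')$ with $y'=s$ (and $x'=x$ or $s$), so that the returned vertex automatically avoids $N[s]$ and retains its degree in $G$. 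These are the points you would need to supply to turn the outline into a proof.
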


\begin{proof}
Suppose for a contradiction that the theorem does not hold, and let $(G,x,y)$ be a counterexample with $|V(G)|$ minimum. Then every vertex in $V(G) \setminus (N[x] \cup N[y])$ has degree at least three in $G$. Since $G$ is not series-parallel, and $G$ is $\isktk$-free, it follows from Theorem~\ref{lem:sub} that $G$ contains a wheel and hence by Lemma~\ref{proper}, it follows that $G$ contains a proper wheel $W = (C,s)$. Let $C_1, \dots, C_k$ denote the components of $V(G) \setminus N[s]$. For $i = 1, \dots, k$ let $N_i$ denote the set of neighbors $v$ of $s$ such that $v$ has a neighbor in $C_i$, and let  $G_i$ denote the induced subgraph of $G$ with vertex set $V(C_i) \cup N_i \cup \sset{s}$. 

\vspace*{-0.4cm}\begin{equation} \label{eq:spcycle}
  \longbox{\emph{For $i = 1, \dots, k$, if $G_i$ is series-parallel and $G_i \setminus (N[s] \cap \sset{x,y,s})$ contains a cycle, then $\sset{x,y} \cap V(C_i) \neq \emptyset$.}}
\end{equation}

Let $i \in \sset{1,\dots, k}$ such that $G_i$ is series-parallel, and let $G_i \setminus (N[s] \cap \sset{x,y,s})$ contain a cycle. Since $G$ is triangle-free, it follows that $1 \leq |N[s] \cap \sset{x,y,s}| \leq 2$. By Lemma~\ref{lem:farcycle} applied to $G_i$ and the vertices in $N[s] \cap \sset{x,y,s}$, it follows that either there is a vertex in $V(G_i) \setminus N[s]$ of degree at most one in $G_i$ anticomplete to $\sset{x,y} \cap N(s)$, or $G_i \setminus (N[s] \cap \sset{x,y,s})$ contains a cycle $C'$ with at least two vertices of degree two in $G_i$. In both cases, there is a vertex $z$ in $V(G_i) \setminus N[s]$ of degree at most two in $G_i$ and $z$ is anticomplete to $N[s] \cap \sset{x,y,s}$, and hence its degree in $G$ is also at most two. Since $(G, x, y)$ does not satisfy property $\mathcal{P}$, it follows that $z \in N[x] \cup N[y]$, and thus $\sset{x,y} \cap V(C_i) \neq \emptyset$. This proves~\eqref{eq:spcycle}. 

\vspace*{-0.4cm}\begin{equation} \label{eq:sp}
  \longbox{\emph{For $i = 1, \dots, k$, if $G_i$ is series-parallel, then $|V(C_i)| = 1$.}}
\end{equation}

Let $i \in \sset{1,\dots, k}$ be such that $G_i$ is series-parallel, and suppose that $|V(C_i)| > 1$. Let $G'$ be the graph that arises from $G$ by contracting $V(C_i)$ to a new vertex $z$. We let $x' = z$ if $x \in V(C_i)$ and $x' = x$ otherwise; and we let $y' = z$ if $y \in V(C_i)$ and $y' = y$ otherwise.  By Lemma~\ref{lem:contract}, $G'$ is $\isktk$-free. By Lemma~\ref{lem:non-centers}, $(x',y')$ is a non-center pair for $G'$. By the minimality of $|V(G)|$, it follows that $(G', x', y')$ has the property $\mathcal{P}$. Let $v \in V(G') \setminus (N[x'] \cup N[y'])$ be a vertex of degree at most two in $G'$. From the definition of $x'$ and $y'$, it follows that $v \not\in N[x] \cup N[y]$. It follows that either $v=z$, or $v\neq z$ and $d_G(v) > 2$, and so $v \in N[z]$.

Suppose first that $v = z$. Then $z \not\in N[x'] \cup N[y']$, and so $V(G_i) \cap \sset{x, y} = \emptyset$. By~\eqref{eq:spcycle}, it follows that $G_i \setminus s$ is a tree. Since $v$ has degree at most two in $G'$, it follows that $|N_i| \leq 2$, and since $G_i \setminus N[s]$ is connected, it follows that every vertex of $N_i$ is a leaf of $G_i \setminus s$.  Thus, either $V(C_i)$ contains a leaf of $G_i \setminus s$, or $G_i \setminus s$ is a path with ends in $N_i$, and so in both cases $V(C_i)$ contains a vertex of degree at most two in $G$. This is a contradiction since $V(G_i) \cap \sset{x,y} = \emptyset$; it follows that $v \neq z$. 

It follows that $v \in N(z)$. Since $d_G(v) > 2$, it follows that $d_{G'}(v) < d_G(v)$, and thus $v$ has more than one neighbor in $V(C_i)$. Let $P$ be a path in $C_i$ between two neighbors of $v$, then $v-P-v$ is a cycle in $G_i \setminus (N[s] \cap \sset{x,y,s})$. By~\eqref{eq:spcycle}, it follows that $V(C_i) \cap \sset{x, y} \neq \emptyset$. But then $z \in \sset{x', y'}$, and so $v \in N[x'] \cup N[y']$, a contradiction.  This proves~\eqref{eq:sp}. 

\vspace*{-0.4cm}\begin{equation} \label{eq:big}
  \longbox{\emph{For $i = 1, \dots, k$, if $G_i$ contains a wheel, then $x \in V(C_i)$ or $y \in V(C_i)$.}}
\end{equation} 

Suppose not, and let $i \in \sset{1, \dots, k}$ be such that $G_i$ contains a wheel and $V(C_i) \cap \sset{x,y} = \emptyset$. Since $N_i$ is a stable set, it follows that $|N_i \cap \sset{x,y}| \leq 1$, and by symmetry, we may assume that $y \not\in N_i$. Let $y' = s$, and let $x' = x$ if $x \in N_i$ and $x' = s$ otherwise. By Lemma~\ref{lem:starcutset}, $(x',y')$ is a non-center pair for $G_i$. Since $G_i$ is an induced subgraph of $G$, it follows that $G_i$ is $\isktk$-free. Since $G$ is a minimum counterexample, it follows that $(G_i, x', y')$ has the property $\mathcal{P}$. Let $v$ be a vertex of degree at most two in $G_i$ with $v \not\in N[x'] \cup N[y']$. Since $v \not\in N[s]$, it follows that $d_G(v) = d_{G_i}(v)$. Therefore $v \in N[x] \cup N[y]$. Let $w \in \sset{x, y}$ be such that $v \in N[w]$. By assumption, $w \not\in V(C_i)$. If $w \in V(G_i)$, then $w \in N_i$, and so $w=x=x'$, and thus $v \not\in N[w]$, a contradiction. It follows that $w \not\in V(G_i)$, and so $N[w] \cap V(G_i) \subseteq N[s]$, and again $v \not\in N[w]$, a contradiction. This proves that $\sset{x,y} \cap V(C_i) \neq \emptyset$, and~\eqref{eq:big} follows. 

\bigskip
It follows from Theorem~\ref{thm:duffin} together with~\eqref{eq:big} and~\eqref{eq:sp} that there is at most one $i \in \sset{1, \dots, k}$ with $|V(C_i)| > 1$. We may assume $|V(C_i)| = 1$ for all $i \in \sset{1, \dots, k-1}$. 

\vspace*{-0.4cm}\begin{equation} \label{eq:girth8}
  \longbox{\emph{If $|V(C_k)| > 1$, let $G' = G \setminus (V(C_k) \cup \sset{s})$; otherwise let $G' = G \setminus s$. Then $G'$ has girth at least eight.}}
\end{equation} 
Observe that $G'$ is bipartite with one side of the bipartition being $N(s)$. 

 Suppose that $C$ is a cycle of length four in $G'$. Let $V(C) = \sset{a,b,c,d}$ and $N(s) \cap V(C) = \sset{a,c}$. If $d_G(b) \neq 2$, let $e$ be a neighbor of $b$ which is not $a,c$. Note that $e \in N(s)$. Then $\sset{a,b,c,d,e,s}$ induces an $\isk$ or a $K_{3,3}$, a contradiction. It follows that $d_G(b) = d_G(d) = 2$, and moreover, $\sset{x,y} \cap \sset{a,c} \neq \emptyset$. So, by symmetry, say $x = a$, and we may assume that $d \neq y$.

Observe that $G \setminus b$ is not series-parallel by Theorem~\ref{thm:duffin}. By the minimality of $|V(G)|$, it follows that there exists $v \in V(G) \setminus (N[x] \cup N[y])$ with $d_{G \setminus \sset{b}}(v) \leq 2$. Since $d_G(v') = d_{G \setminus \sset{b}}(v')$ for all $v' \in V(G) \setminus \sset{x, b, c}$, it follows that $v=c$, and so $N_G(c) = \sset{b,d,s}$, and so $\sset{s, a}$ is a cutset in $G$. Let $G'' = G \setminus \sset{b,c,d}$, and if $y \in \sset{b,c,d}$, let $y' = x$, otherwise, $y' = y$. Then $G''$ is not series-parallel. A proper wheel in $G''$ is proper in $G$, because each vertex in $\sset{b,c,d}$ has at most one neighbor in the wheel, $s$ or $a$. Therefore, $(x,y')$ is a non-center pair for $G''$.  By the minimality of $|V(G)|$, it follows that $(G'', x, y')$ has the property $\mathcal{P}$. But this is a contradiction, since every vertex in $V(G'') \setminus N[x]$ has the same degree in $G$ and $G''$. This proves that $G'$ contains no 4-cycle. 

Suppose $G'$ contains a 6-cycle $C$. Then, since exactly three vertices in $V(C)$ are neighbors of $s$, it follows that $G|(V(C) \cup \sset{s})$ is an $\isk$, a contradiction. It follows that $G'$ has girth at least eight, and so~\eqref{eq:girth8} is proved. 

\vspace*{-0.4cm}\begin{equation} \label{eq:singletons}
  \longbox{\emph{$|V(C_k)| > 1$.}}
\end{equation} 

Suppose not, and let $G' = G \setminus s$. Then $G'$ satisfies the hypotheses of Theorem~\ref{thm:girth}. Since $s$ is the center of a proper wheel in $G$, it follows that there exists a vertex $z$ in $G'$ that is not in $N[x] \cup N[y] \cup N[s]$, and so the first outcome of Theorem~\ref{thm:girth} does not hold. The second outcome does not hold, because every vertex in $V(G') \setminus (N[x] \cup N[y])$ of degree one in $G'$ has degree at most two in $G$, a contradiction. 

Therefore, the third or fourth outcome of Theorem~\ref{thm:girth} holds, and hence there exists an induced cycle $C$ in $G'$ with vertices $c_1 - \ldots - c_t - c_1$, and $i,j \in \sset{1, \dots, t}$, $l \in \sset{0, \dots, 3}$ such that all vertices of $C$ except for $c_i, \dots, c_{i+l}$ (where $c_{t+1} = c_1$ and so on) and $c_j$ have degree two in $G'$, do not coincide with $x, y$ and are non-neighbors of $x, y$. By~\eqref{eq:girth8}, $t \geq 8$. Consequently, $G'$ contains two adjacent vertices  in $V(G') \setminus (N[x] \cup N[y])$ of degree two in $G'$. Since $G$ is triangle-free, it follows that one of them has degree two in $G$, a contradiction. Thus, $|V(C_k)| > 1$, and~\eqref{eq:singletons} is proved. 

\bigskip

By~\eqref{eq:sp}, \eqref{eq:big} and~\eqref{eq:singletons} we may assume that $x \in V(C_k)$. Let $G'$ arise from $G$ by contracting $V(C_k) \cup N_k$ to a single vertex $z$, and by deleting $s$ and every vertex that is only adjacent to $z$. It follows that $G'$ is bipartite. Our goal is to prove that $G' \setminus z$ has girth at least 16, see~\eqref{eq:tree}. By~\eqref{eq:girth8}, we know that $G' \setminus z$ has girth at least eight. 

\vspace*{-0.4cm}\begin{equation} \label{eq:uniquenbrs}
  \longbox{\emph{Every vertex in $V(G') \setminus \sset{z}$ has at most one neighbor in $N_k$ in $G$. There is no 4-cycle in $G'$ containing $z$.}}
\end{equation} 
Suppose first that there is a vertex $v \in V(G') \setminus \sset{z}$ with at least two neighbors $a, b \in N_k$ in $G$. Since $v \in V(G')$ and in $G'$ there are no vertices of degree one adjacent to $z$, it follows that $v$ has another neighbor $c \in N(s) \setminus N_k$. Let $P$ be a path connecting $a$ and $b$ with interior in $V(C_k)$. Such a path exists, since $a, b \in N_k$. It follows that $G|(V(P) \cup \sset{a,b,c,v,s})$ is an $\isk$ in $G$, a contradiction. This implies the first statement of~\eqref{eq:uniquenbrs}. 

Suppose that $z$ is contained in a 4-cycle with vertex set $\sset{a,b,c,z}$ in $G'$ such that $a, c \in N_{G'}(z)$. Note that $a, c \not\in N(s)$ and $b \in N(s) \setminus N_k$. By~\eqref{eq:girth8}, $G \setminus (\sset{s} \cup V(C_k))$ contains no 4-cycle, and thus $a$ and $c$ have no common neighbor in $N_k$. Let $a', c'$ be a neighbor of $a$ and $c$ in $N_k$, respectively; $a'$ and $c'$ exists since $a, c \in N_{G'}(z)$. Let $P$ be a shortest path between $a'$ and $c'$ with interior in $C_k$. Since $b \not\in N_k$, it follows that  $b$ is anticomplete to $V(P)$. Therefore, $G|(\sset{a,b,c, s} \cup V(P))$ is an $\isk$ in $G$, a contradiction. This proves~\eqref{eq:uniquenbrs}. 

\vspace*{-0.4cm}\begin{equation} \label{eq:class}
  \longbox{\emph{$G'$ is $\isktk$-free.}}
\end{equation} 
Since $G'$ is bipartite, it follows that $G'$ is triangle-free. Suppose that $G'$ contains an induced subgraph $H$ which is either a $K_{3,3}$ or an $\isk$. Since $G$ is $\isktk$-free, it follows that $z \in V(H)$. Suppose that $z$ has degree two in $H$. By~\eqref{eq:uniquenbrs}, the neighbors of $z$ in $V(H)$ do not have a common neighbor in $N_k$. Let $P$ be a path in $G$ connecting the neighbors of $z$ in $V(H)$ with interior in $V(C_k) \cup N_k$ containing exactly two vertices in $N_k$. Then $G|((V(H) \setminus \sset{z}) \cup V(P))$ is an induced subdivision of $H$ in $G$. By Theorem~\ref{lem:sub}, it follows that $G$ is not $\isktk$-free, a contradiction. 

 It follows that $z$ has degree three in $H$. Let $a, b, c$ be the neighbors of $z$ in $H$. By~\eqref{eq:uniquenbrs}, each of $a,b,c$ has a unique neighbor in $N_k$. Let $a', b', c'$ be neighbors of $a,b,c$ in $N_k$. Let $H'$ be a minimal induced subgraph of $G|(V(C_k) \cup \sset{a, b, c, a',b',c'})$ which is connected and contains $\sset{a,b,c}$. It follows that each of $a, b, c$ has a unique neighbor (namely $a', b', c'$, respectively), in $H'$. By Lemma~\ref{lem:treecnn}, $H'$ is a subdivision of $K_{1,3}$ in which $a, b, c$ are the vertices of degree one. Consequently, $G|(V(H \setminus z) \cup V(H'))$ is an induced subgraph of $G$ which is a subdivision of $H$. But then $G$ is not $\isktk$-free by Theorem~\ref{lem:sub}. Hence $G'$ is $\isktk$-free. This proves~\eqref{eq:class}. 

\vspace*{-0.4cm}\begin{equation} \label{eq:star}
  \longbox{\emph{$G'$ does not contain a proper wheel with center different from $z$.}}
\end{equation} 
Suppose $v \neq z$ is the center of a proper wheel $G'$. By Theorem~\ref{wheelmain0}, there is a component $C$  of $G' \setminus N[v]$ that is disjoint from $N[z]$. Let $N$ denote the set of vertices in $N(v)$ with a neighbor in $C$.

Then $H = G'|(N \cup V(C) \cup \sset{v})$ satisfies the hypotheses of Theorem~\ref{thm:girth}. Since $V(C) \neq \emptyset$, it follows that the first outcome of Theorem~\ref{thm:girth} does not hold. Moreover, every vertex in $V(H) \setminus N[v]$ of degree one in $H$ has degree at most two in $G$, since such a vertex belongs to $C$ and $C$ is disjoint from $N[z]$, and the only additional neighbor that such a vertex may have in $G$ is $s$. Furthermore, such a vertex is in $V(G') \setminus N[z]$, and hence in $V(G) \setminus (N[x] \cup N[y])$ as $x \in V(C_k)$. It follows that the second outcome of Theorem~\ref{thm:girth} does not hold. 

Therefore, the third or fourth outcome of Theorem~\ref{thm:girth} holds, and hence there exists an induced cycle $C'$ in $H$ with vertices $c_1 - \ldots - c_t - c_1$, and $i,j \in \sset{1, \dots, t}$, $l \in \sset{0, \dots, 3}$ such that all vertices of $C'$ except for $c_i, \dots, c_{i+l}$ (where $c_{t+1} = c_1$ and so on) and $c_j$ have degree two in $H$, do not coincide with $v$ and are non-neighbors of $v$. By~\eqref{eq:girth8}, $t \geq 8$, since $z \not\in V(H)$. Consequently, $G'$ contains two adjacent vertices  in $V(G') \setminus N[z]$ of degree two in $G'$. Since $G$ is triangle-free, it follows that one of them is non-adjacent to $s$ and thus has degree two in $G$, a contradiction. Hence~\eqref{eq:star} is proved. 

\vspace*{-0.4cm}\begin{equation} \label{eq:forest}
  \longbox{\emph{For every component $K$ of $G' \setminus N[z]$, $G'|(V(K) \cup N(z))$ is a forest.}}
\end{equation} 
Suppose not, and let $K$ be a component of $G' \setminus N[z]$ such that $G'|(V(K) \cup N(z))$ is not a forest. Suppose first that $H = G'|(V(K) \cup N[z])$ is not series-parallel. Then $H$ contains a proper wheel by Lemma~\ref{proper}. Let $v$ be the center of a proper wheel in $H$. Since $H \setminus N[z]$ is connected, it follows from Theorem~\ref{wheelmain0} that $v \neq z$. By Lemma~\ref{lem:starcutset}, it follows that $v$ is the center of a proper wheel in $G'$, contrary to~\eqref{eq:star}. 

It follows that $H$ is series-parallel, and by our assumption, $H \setminus z$ contains a cycle. By applying Lemma~\ref{lem:farcycle} to $H$ and $z$, it follows that there is either a vertex in $V(H) \setminus N[z]$ of degree one, or a cycle $C$ not containing $z$, with all but at most two vertices of degree two in $H$. In the latter case, since $G' \setminus z$ has girth at least eight, $C$ contains two adjacent vertices in $V(H) \setminus N[z]$ of degree two in $H$, and thus of degree two in $G'$. Since $G$ is triangle-free, it follows that in both cases $G$ contains a vertex of degree at most two not in $N[z]$, and thus not in $N[x] \cup N[y]$. This is a contradiction, and~\eqref{eq:forest} is proved. 

\vspace*{-0.4cm}\begin{equation} \label{eq:tree}
  \longbox{\emph{The girth of $G' \setminus z$ is at least 16.}}
\end{equation} 

Suppose that this is false. Let $C$ be an induced cycle in $G' \setminus z$ of length less than 16. Since by~\eqref{eq:forest}, for every component $K$ of $G' \setminus N[z]$, we have that $G'|(V(K) \cup N(z))$ is a forest, it follows that $C \setminus N[z]$ has at least two components. Since $z$ is not contained in a 4-cycle in $G'$ by~\eqref{eq:uniquenbrs}, and $G'$ is bipartite, it follows that each component of $C \setminus N[z]$ has at least three vertices. If $C \setminus N[z]$ has at least four components, it follows that $C$ has length at least 16. If $C \setminus N[z]$ has exactly three components, then $G|(V(C) \cup \sset{z})$ is an $\isk$, a contradiction. So $C \setminus N[z]$ has exactly two components. For every  component $K$ of $G' \setminus N[z]$, by~\eqref{eq:forest} we have that $G'|(V(K) \cup N(z))$ is a forest. Therefore, the two components of $C \setminus N[z]$ are contained in two different components of $G' \setminus N[z]$; say $A$ and $B$. Let $N_A, N_B$ denote the set vertices in $N(z)$ with a neighbor in $A$, $B$, respectively. Suppose that $|N_A| \geq 3$. Since $|V(C) \cap N(z)| = 2$, it follows that there is a path $P$ from a vertex $c$ in $N_A \setminus V(C)$ to $V(C)$ with interior in $V(A)$. Since $G'|(V(A) \cup N_A)$ and $G'|(V(B) \cup N_B)$ are trees, it follows that $c$ has at most one neighbor in each component $K$ of $C \setminus N[z]$. Therefore, $G'|(V(P) \cup V(C) \cup \sset{z})$ contains an induced subgraph of $G'$ which is either a subdivision of $K_4$ or of $K_{3,3}$, a contradiction by Theorem~\ref{lem:sub} and~\eqref{eq:class}. So $|N_A| = 2$. Since $G'|(V(A) \cup N_A)$ is a tree, it follows that either $A$ contains a vertex of degree one in $G'$, non-adjacent to $z$, or $G'|(V(A) \cup N_A)$ is a path containing at least five vertices, and hence $A$ contains two adjacent vertices of degree two in $G'$, non-adjacent to $z$. Since $G$ is triangle-free, it follows that in either case $G$ contains a vertex of degree at most two not in $N[z]$, and thus not in $N[x] \cup N[y]$. This is a contradiction, and~\eqref{eq:tree} is proved. 

\bigskip

Recall that $\sset{x, y} \cap V(C_k) \neq \emptyset$, and we may assume that $x \in V(C_k)$, and thus $y \in V(C_k) \cup N_k$. Let $G''$ be the graph that arises from $G$ by deleting $\sset{s} \cup (V(C_k) \setminus \sset{x}) \cup (N_k \setminus \sset{y})$, and every vertex other than $x$ with neighbors only in $N_k$ (this last operation does not change the degree of any vertex in $V(G'')$ except for possibly $y$). Then $N_{G''}(x) \subseteq \sset{y}$. It follows from~\eqref{eq:tree} that $G'' \setminus \sset{y}$ has girth at least 16, and from~\eqref{eq:girth8} that $G''$ has girth at least eight. If $y \in V(G'')$, let $y' = y$; otherwise, let $y' = x$. It follows that if $y' = y$, then $y \in N_k$. 

Since $G''$ is an induced subgraph of $G$, it follows that $G''$ and $x, y'$ satisfy the hypotheses of Theorem~\ref{thm:girth}. 

Since $s$ is the center of a proper wheel, it follows from Theorem~\ref{wheelmain0} that there are at least two components of $G'' \setminus N[s]$ in which $y'$ has no neighbors. Consequently, $V(G'') \neq N[x] \cup N[y']$, and thus the first outcome of Theorem~\ref{thm:girth} does not hold. 

The second outcome of Theorem~\ref{thm:girth} does not hold, because if $G''$ contains a vertex $v$ of degree one non-adjacent to $y'$, then $v$ has degree at most two in $G$, and $v \not\in N[x] \cup N[y]$, a contradiction. 

Suppose that the third outcome holds, and so $G''$ contains an induced cycle $C$ containing $y'$ (since $d_{G''}(x) \leq 1$) such that at most one vertex in $V(C) \setminus N[y]$ has degree more than two. Since $G''$ has girth at least eight, $|V(C)| \geq 8$, and in particular $C$ contains a vertex $v$ of distance three from $y$ in $C$ and degree two in $G''$. Let $y-a-b-v$ be the three-edge path from $y$ to $v$ in $C$. Then $v$ is not adjacent to $s$ in $G$, because $G|(V(G'') \cup \sset{s})$ is bipartite and $ys \in E(G)$. Moreover, $v$ anticomplete to $N_k$, because otherwise $z-a-b-v-z$ is a 4-cycle in $G'$ using $z$, contradicting~\eqref{eq:uniquenbrs}. So $v$ has degree two in $G$ and is not in $N[x] \cup N[y]$, a contradiction. 

Thus, the fourth outcome holds, and so $G''$ contains an induced cycle $C$ not containing $x,y'$ and containing a vertex $z'$ such that at most one vertex in $V(C) \setminus N[z']$ has degree more than two in $G''$. Since $|V(C)| \geq 16$, it follows that $C$ contains a path $P = p_1 - \ldots - p_6$ of six vertices, all of degree two in $G''$ and non-adjacent to $x,y'$. We may assume that $N(s) \cap V(P) \subseteq \sset{p_1, p_3, p_5}$ by symmetry. Since $z$ is not in a 4-cycle in $G'$ by~\eqref{eq:uniquenbrs}, not both $p_2$ and $p_4$ have a neighbor in $N_k$. It follows that either $p_2$ or $p_4$ has degree two in $G$, a contradiction. This completes the proof of Theorem~\ref{thm:main}. 
\end{proof}

We can now prove Theorem~\ref{v2} which we restate:

\begin{theorem} \label{thm:main2} Let $G$ be an $\iskt$-free graph. Then either $G$ has a clique cutset, $G$ is complete bipartite, or $G$ has a vertex of degree at most two. 
\end{theorem}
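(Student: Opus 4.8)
The plan is to run a minimal-counterexample argument that channels everything into Theorem~\ref{thm:main0}. Suppose the theorem fails and let $G$ be a counterexample with $|V(G)|$ minimum; then $G$ has no clique cutset, $G$ is not complete bipartite, and every vertex of $G$ has degree at least three. First we reduce the hypotheses: if $G$ contains $K_{3,3}$ then Theorem~\ref{thm:todo} gives a clique cutset or shows $G$ is complete bipartite, a contradiction, so $G$ is $K_{3,3}$-free and hence $\isktk$-free; if $G$ were series-parallel then Theorem~\ref{thm:duffin} would give a vertex of degree at most two, so $G$ is not series-parallel; and then by Theorem~\ref{lem:sub} $G$ contains a $K_{3,3}$ subgraph or a wheel, the former being impossible (a $K_{3,3}$ subgraph of the triangle-free graph $G$ would be induced), so $G$ contains a wheel and, by Lemma~\ref{proper}, a proper wheel.

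The next step is to use non-center pairs. If some vertex $v\in V(G)$ is not the center of a proper wheel in $G$, then $(v,v)$ is a non-center pair for $G$, and Theorem~\ref{thm:main0} supplies a vertex $u\in V(G)\setminus N[v]$ of degree at most two, a contradiction; so we may assume every vertex of $G$ is the center of a proper wheel. Now fix a proper wheel $W=(C,s)$ with a minimum number of spokes. A wheel has at least three spokes, and no two spokes are adjacent since $G$ is triangle-free, so $W$ has at least three sectors, each with nonempty interior; hence by Theorem~\ref{wheelmain0} the components $C_1,\dots,C_k$ of $G\setminus N[s]$ satisfy $k\ge 3$. For each $i$ let $N_i$ be the set of vertices of $N(s)$ with a neighbor in $C_i$, and let $H_i=G|(V(C_i)\cup N_i\cup\{s\})$; then $V(C_i)=V(H_i)\setminus N_{H_i}[s]$, every $G$-neighbor of a vertex of $C_i$ lies in $H_i$ (so vertices of $C_i$ have the same degree in $G$ and in $H_i$), and $|V(H_i)|<|V(G)|$. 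By Lemma~\ref{lem:starcutset}, $s$ is not the center of a proper wheel in $H_i$, so $(s,s)$ is a non-center pair for $H_i$.

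The idea now is to squeeze each $H_i$. If some $H_i$ is not series-parallel, then Theorem~\ref{thm:main0} applied to $H_i$ with $(s,s)$ gives a vertex of $V(C_i)$ of degree at most two in $H_i$, hence in $G$, a contradiction; so every $H_i$ is series-parallel. Then $H_i\setminus s$ cannot contain a cycle, since Lemma~\ref{lem:farcycle} applied to $(H_i,s,s)$ would yield a vertex of $V(C_i)$ of degree at most two in $G$; so $H_i\setminus s$ is a connected tree, all of whose leaves lie in $N_i$ (a leaf in $V(C_i)$ would have degree one in $G$). Next, Lemma~\ref{lem:tree} applied to $(H_i,s)$ forces $H_i$ to contain an induced cycle through $s$; since every vertex of $C_i$ has degree at least three, this cycle must be a $4$-cycle $s-p_i-v^i-q_i-s$ with $v^i\in V(C_i)$ and $p_i,q_i\in N_i$ of $H_i$-degree exactly two, so each of $p_i,q_i$ has a $G$-neighbor outside $H_i$, necessarily in another component $C_j$. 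Moreover $|N_i|\ge 3$ for every $i$: if $N_i=\{p_i,q_i\}$ then $v^i$ is the only vertex of $C_i$ with a neighbor in $N(s)$, so either $\{v^i\}$ is a clique cutset of $G$ (if $|V(C_i)|\ge 2$) or $v^i$ has degree two (if $V(C_i)=\{v^i\}$), both impossible.

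The hard part will be the endgame. At this point $G$ has a very rigid shape: $s$ is complete to the stable set $N(s)$; each component $C_i$ of $G\setminus N[s]$ sits inside a tree $H_i\setminus s$ whose leaves all lie in $N_i$; every vertex of each $C_i$ has degree at least three; $|N_i|\ge 3$; and the sets $N_i$ overlap, since $p_i,q_i$ reach into other components. We expect the contradiction to come from exhibiting a forbidden induced subgraph: starting from a small complete-bipartite configuration on $s$, a vertex of some $C_i$, and several vertices of $N_i$, one routes the missing edges through another connected component $C_j$ to obtain an induced subdivision of $K_4$ or an induced $K_{3,3}$, the delicate point being to keep the configuration induced, which one manages using triangle-freeness and shortest-path choices; the residual cases should again yield a clique cutset or a vertex of degree at most two. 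Once this is done the theorem follows, and Conjecture~\ref{chi3} follows easily, since in each conclusion of the theorem a proper $3$-colouring is at hand (degree-$\le 2$ vertices are removed and clique cutsets are cut along, both by induction, while complete bipartite graphs are $2$-colourable).
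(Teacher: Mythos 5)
Your reductions to the $\isktk$-free, non-series-parallel case are correct and match the paper, and your use of Theorem~\ref{thm:main0} when some vertex of $G$ fails to be the center of a proper wheel is fine. But there is a genuine gap exactly where you flag ``the endgame'': you never derive a contradiction in the case where \emph{every} vertex of $G$ is the center of a proper wheel. The structural facts you extract there (each $H_i$ series-parallel, $H_i\setminus s$ a tree with all leaves in $N_i$, the induced $4$-cycles $s-p_i-v^i-q_i-s$, $|N_i|\ge 3$) are essentially sound, but the concluding step --- routing paths through other components $C_j$ to exhibit an induced $\isk$ or $K_{3,3}$ --- is stated only as an expectation. It is not routine: the sets $N_i$ can interleave in complicated ways, and keeping such a configuration induced is precisely the kind of difficulty the rest of the paper spends many pages controlling. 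As written, the argument does not close.

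The paper avoids this case entirely with a one-line device you missed: apply Theorem~\ref{thm:main0} not to $G$ but to the graph obtained from $G$ by adding a new \emph{isolated} vertex $x$, with the non-center pair $(x,x)$. An isolated vertex is never the center of a proper wheel, adding it preserves being $\isktk$-free and not series-parallel, and $N[x]=\sset{x}$, so the theorem returns a vertex of $V(G)$ of degree at most two --- regardless of whether every vertex of $G$ itself is a proper wheel center. Replacing the entire second half of your argument with this observation completes the proof.
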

\begin{proof}
Let $G$ be an $\iskt$-free graph. If $G$ is series-parallel, then $G$ contains a vertex of degree at most two Theorem~\ref{thm:duffin}. If $G$ contains $K_{3,3}$ as a subgraph, then by Theorem~\ref{thm:todo}, either $G$ is complete bipartite or $G$ has a clique cutset. If $G$ is not series-parallel and $K_{3,3}$-free, then $G$ contains a vertex of degree at most two by Theorem~\ref{thm:main0} applied to the graph obtaining from $G$ by adding an isolated vertex $x$ with the non-center pair $(x,x)$. This implies the result. 
\end{proof}

Note that the outcome of a clique cutset in Theorem~\ref{thm:main2} cannot be avoided, as the following example shows. Let $G$ be any $\iskt$-free graph (e.\ g.\ a $C_5$, or a wheel), and let $H$ arise from $G$ by adding $|V(G)|$ disjoint copies of $K_{3,3}$ to $G$ and identifying each vertex of $G$ with a vertex of a different copy of $K_{3,3}$. The resulting graph is $\iskt$-free, not series-parallel, and not bipartite if $G$ is not bipartite, and it contains no vertex of degree at most two.  

Finally we are ready to prove the following. 
\begin{theorem} If $G$ is an $\iskt$-free graph, then $G$ is 3-colorable. 
\end{theorem}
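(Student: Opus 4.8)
The plan is a straightforward induction on $|V(G)|$, feeding Theorem~\ref{v2} into the standard reductions for the chromatic number. Assume the result holds for all $\iskt$-free graphs on fewer vertices; the base case of at most one vertex is trivial. Since every induced subgraph of an $\iskt$-free graph is again $\iskt$-free, the induction hypothesis applies to all induced subgraphs of $G$. Now apply Theorem~\ref{v2} to $G$: it falls into (at least) one of three cases, and I will handle each in turn.

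If $G$ has a vertex $v$ of degree at most two, then $G \setminus v$ is $3$-colorable by induction, and since $v$ has at most two neighbours some colour remains available at $v$, so the colouring extends to $G$. If $G$ is complete bipartite, then $\chi(G) \le 2 \le 3$. The only case that requires a remark is that of a clique cutset: if $(X,Y,Z)$ is a partition of $V(G)$ with $Y, Z \neq \emptyset$, $Y$ anticomplete to $Z$, and $X$ a clique, then $G|(X \cup Y)$ and $G|(X \cup Z)$ are $\iskt$-free with fewer vertices than $G$, hence each admits a proper $3$-colouring by induction. Here one uses that $G$ is triangle-free, so $|X| \le 2$; a proper colouring of a clique of size at most two uses at most two of the three available colours, so we may permute colours on the $G|(X \cup Z)$ side to make the two colourings agree on $X$, and then glue them. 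The resulting map $V(G) \to \{1,2,3\}$ is a proper colouring, because every edge of $G$ lies inside $X \cup Y$ or inside $X \cup Z$ (there being no edges between $Y$ and $Z$). Hence $\chi(G) \le 3$ in all three cases, completing the induction.

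I do not anticipate any genuine obstacle here: essentially all the difficulty has already been packaged into Theorem~\ref{v2}. The one point to be careful about is the clique-cutset step, where triangle-freeness is exactly what forces the cutset clique to have at most two vertices, so that three colours really do suffice to reconcile the two partial colourings; without that observation one only recovers the weaker bound $\chi(G) \le 4$ of \cite{le}. Deducing Conjecture~\ref{chi3} is then immediate.
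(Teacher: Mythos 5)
Your proof is correct and follows essentially the same route as the paper: induction on $|V(G)|$ through the three outcomes of Theorem~\ref{v2}, with the complete bipartite, low-degree-vertex, and clique-cutset cases handled in the standard way. One small remark: the triangle-freeness observation in the clique-cutset step is not actually needed --- for a clique cutset $X$ of any size, $X$ receives $|X|$ pairwise distinct colours in each of the two partial colourings, so some permutation of the three colours always reconciles them, and your claim that one would otherwise only recover $\chi(G)\le 4$ is therefore not right, though this does not affect the validity of your argument.
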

\begin{proof}
The proof is by induction on $|V(G)|$ using Theorem~\ref{v2}. If $G$ is complete bipartite, then $G$ is 2-colorable. If $G$ has a vertex $v$ of degree at most two, then, by induction, $G \setminus v$ is 3-colorable, and hence $G$ is 3-colorable. If $G$ has a clique cutset $C$ such that $(A, B, C)$ is a partition of $V(G)$ with $A$ anticomplete to $B$ and $C$ a clique, then $\chi(G) = \max\sset{\chi(G|(A \cup C)), \chi(G|(B \cup C))}$, and again by induction, $G$ is 3-colorable. 
\end{proof}

\section*{Acknowledgments}

We are thankful to Paul Seymour and Mingxian Zhong for many helpful discussions.

\end{document}